\newtheorem{theorem}{Theorem}
\newtheorem{lemma}{Lemma}
\newtheorem{proposition}{Proposition}
\newtheorem{remark}{Remark}
\newtheorem{definition}{Definition}
\newtheorem{corollary}{Corollary}
\newtheorem{example}{Example}
\numberwithin{equation}{section}
\begin{document}
\title[Convergence of the Sasaki-Ricci Flow on Sasakian $5$-Manifolds]{Convergence of the Sasaki-Ricci Flow on Sasakian $5$-Manifolds of General Type}
\author{$^{\ast}$Shu-Cheng Chang}
\address{Department of Mathematics, National Taiwan University, Taipei, Taiwan}
\email{scchang@math.ntu.edu.tw}
\author{$^{\dag}$Yingbo Han}
\address{{School of Mathematics and Statistics, Xinyang Normal University}\\
Xinyang,464000, Henan, P.R. China}
\email{{yingbohan@163.com}}
\author{$^{\ast\ast}$Chien Lin}
\address{Mathematical Science Research Center, Chongqing University of Technology,
400054, Chongqing, P.R. China}
\email{chienlin@cqut.edu.cn}
\author{$^{\ast\ast\ast}$Chin-Tung Wu}
\address{Department of Applied Mathematics, National Pingtung University, Pingtung
90003, Taiwan}
\email{ctwu@mail.nptu.edu.tw }
\thanks{$^{\ast}$Shu-Cheng Chang and $^{\ast\ast\ast}$Chin-Tung Wu are partially
supported in part by the MOST of Taiwan. $^{\dag}$Yingbo Han is partially
supported by an NSFC 11971415 and Nanhu Scholars Program for Young Scholars of
{Xinyang Normal University}.}
\subjclass{Primary 53E50, 53C25; Secondary 53C12, 14E30.}
\keywords{Sasaki-Ricci flow, Mori program, Sasakian manifold, Sasaki $\eta$-Einstein
metric, Canonical model, Minimal model, General type.}

\begin{abstract}
In this paper, we show that the uniform $L^{4}$-bound of the transverse Ricci
curvature along the Sasaki-Ricci flow on a compact quasi-regular Sasakian
$(2n+1)$-manifold $M$ of general type. As an application, any solution of the
normalized Sasaki-Ricci flow converges in the Cheeger-Gromov sense to the
unique singular Sasaki $\eta$-Einstein metric on the transverse canonical
model $M_{\mathrm{can}\text{ }}$ of $M$ \ if $n\leq3$. In particular for
$n=2,$ $M_{\mathrm{can}\text{ }}$ is a $S^{1}$-orbibundle over the unique
K\"{a}hler-Einstein orbifold surface $(Z_{\mathrm{can}},\omega_{KE})$ with
finite point orbifold singularities. The floating foliation $(-2)$-curves in
$M$ will be contracted to orbifold points by the Sasaki-Ricci flow as
$t\rightarrow\infty.$

\end{abstract}
\maketitle
\tableofcontents

\section{Introduction}

Let $(M,\eta,\xi,\Phi,g)$ be a compact quasi-regular Sasakian $(2n+1)$%
-manifold. Then by the first structure theorem (Proposition \ref{P21}),
$M$\ is a principal $S^{1}$-orbibundle ($V$-bundle) over $Z$ which is also a
$Q$-factorial, polarized, normal projective orbifold variety such that there
is an orbifold Riemannian submersion$\ $%
\[
\pi:(M,g,\omega)\rightarrow(Z,h,\omega_{h})
\]
with $\omega=\pi^{\ast}(\omega_{h}).$

If the orbifold structure $(Z,\Delta)$ of the leave space $Z$ is well-formed,
then its orbifold singular locus and algebro-geometric singular locus
coincide, equivalently $Z$ has no branch divisors with $\Delta=\emptyset$.

In the case of $n=2,$ $Z$ has isolated singularities of a finite cyclic
quotient of type $\frac{1}{r}(1,a).$ In particular, it is Kawamata log
terminal singularities. The corresponding singularities in $(M,\eta,\xi
,\Phi,g)$\ is the\ foliation cyclic quotient singularities of type $\frac
{1}{r}(1,a)$ at a singular fibre $S^{1}$ in $M$ (Theorem \ref{T21}). The
orbifold canonical divisor $K_{Z}^{orb}$ and canonical divisor $K_{Z}$ are the
same and then
\[
K_{M}^{T}=\pi^{\ast}(\varphi^{\ast}K_{Z}).
\]

Note that the class of simply connected, closed, oriented, smooth,
$5$-manifolds is classifiable under diffeomorphism due to Smale-Barden
(\cite{s}, \cite{b}). Then, in this paper, it is our goal to focus on a
classification of compact quasi-regular Sasakian $5$-manifolds according to
the global properties of the Reeb\textbf{\ }$U(1)$-fibration\emph{. }

More precisely, there is the Sasaki analogue of Mori's minimal model program
with respect to $K_{Z}$ in a such compact quasi-regular Sasakian $5$-manifold.
In other word, find a finite sequence of basic transverse birational maps
$f_{1},\cdots,f_{k\text{ }}$ and Sasakian $5$-manifolds $M_{1},\cdots,M_{k}$
with
\[%
\begin{array}
[c]{c}%
M=M_{0}\overset{f_{1}}{\rightarrow}M_{1}\overset{f_{2}}{\rightarrow}%
M_{2}\overset{f_{3}}{\rightarrow}\cdots\overset{f_{k}}{\rightarrow}M_{k}%
\end{array}
\]
so that either $M_{k}$ is transverse minimal model or Mori fibre space. That
is, to find $f_{i}$ which "remove" $K_{M}^{T}$-negative foliation curves $V$
with $K_{M_{i}}^{T}\cdot V<0.$ In the paper of \cite{clw}, we proved that
there exists a finite sequence of foliation extremal ray contractions
\[
f_{i}:M_{i-1}\rightarrow M_{i}\text{,\ }i=1,\cdots,k
\]
such that every $M_{i}$ is a Sasakian manifold having at worst foliation
cyclic quotient singularities and for every $f_{i}$ one of the following holds:

(A) Foliation divisorial contraction (The locus of the foliation extremal ray
is an irreducible basic divisor): $f_{i}$ is a foliation divisorial
contraction of a foliation curve $V$ with $V^{2}<0$, and the Picard number
satisfies $\rho(Z_{i})=\rho(Z_{i-1})-1;$ or

(B) Foliation fibre contraction (transverse Mori fibre space) (The locus of
the foliation extremal ray is $M_{i-1}$): $f_{i}$ is a singular fibration such
that either

(i) there is a map%

\[
f:M_{k}\rightarrow pt,
\]
then $K_{M_{k}}^{T}<0$ and thus $M_{k}$ is transverse minimal Fano and the
leave space $Z_{k}$ is minimal log del Pezzo surface of $\frac{1}{r}%
(1,a)$-type singularities and Picard number one, or

(ii)
\[
f:M_{k}\rightarrow\Sigma_{h},
\]
then $M_{k}$ is an $S^{1}$-orbibundle of a rule surface over Riemann surfaces
$\Sigma_{h}$\ of genus $h$, or

(C) $M_{k}$ is nef: $f_{i}=f_{k}$, and $M_{k}$ has at worst foliation cyclic
quotient singularities and has no foliation $K_{M}^{T}$-negative curves.

The Mori's minimal model program in birational geometry can be viewed as the
complex analogue of Thurston's geometrization conjecture which was proved via
Hamilton's Ricci flow with surgeries on $3$-dimensional Riemannian manifolds
by Perelman \cite{p1}, \cite{p2}, \cite{p3}. Likewise, there is a conjecture
picture by Song-Tian \cite{st} that the K\"{a}hler-Ricci flow should carry out
an analytic minimal model program with scaling on projective varieties.
Recently, Song and Weinkove \cite{sw1} established the above conjecture on a
projective algebraic surface.

On the other hand, Sasakian manifolds can be view as an odd-dimensional
analogous of K\"{a}hler manifolds and the Sasaki-Ricci flow can be viewed as a
Sasaki analogue of Cao's result (\cite{cao}) for the K\"{a}hler-Ricci flow. In
the paper of Smoczyk-Wang-Zhang \cite{swz}, they introduced such a flow and
proved that the flow has the longtime solution and asymptotic converges to a
Sasaki $\eta$-Einstein metric when the basic first Chern class is null
($c_{1}^{B}(M)=0$) or negative ($c_{1}^{B}(M)<0$). The latter case is
equivalent to the condition of the transverse canonical line bundle $K_{M}%
^{T}$ is ample.

In this paper, we consider the case where $K_{M}^{T}$ is not necessarily
ample, but nef and big. Such a Sasakian manifold is known as a smooth
transverse minimal model of general type. It is served as an odd-dimensional
counterpart of the K\"{a}hler-Ricci flow on K\"{a}hler surfaces of general
type as in \cite{tz2} and \cite{gsw} via the Sasaki-Ricci flow on a compact
quasi-regular Sasakian $5$-manifold.

More precisely, by applying the Sasaki analogue of arguments in \cite{tz2}, we
show that the $L^{4}$-norm of the transverse Ricci curvature is uniformly
bounded along the normalized Sasaki-Ricci flow on any transverse minimal model
of general type and derive the following results in the paper.

\begin{theorem}
\label{T11} Let $(M,\eta_{0},\xi_{0},\Phi_{0},g_{0},\omega_{0})$ be a compact
quasi-regular Sasakian $(2n+1)$-manifold and its leave space $Z$ of the
characteristic foliation be well-formed. Suppose that $(M,\eta_{0},\xi
_{0},\Phi_{0},g_{0},\omega_{0})$ is a smooth transverse minimal model of
general type with dimension $n\leq3$ and $\omega(t)$ is a solution to the
normalized Sasaki-Ricci flow%
\begin{equation}%
\begin{array}
[c]{c}%
\frac{\partial}{\partial t}\omega(t)=-\mathrm{Ric}_{\omega(t)}^{T}%
-\omega(t),\text{ }\omega(0)=\omega_{0}.
\end{array}
\label{0}%
\end{equation}
Then $(M,\omega(t))$ converges in the Cheeger-Gromov sense to the unique
singular $\eta$-Einstein metric $\omega_{\infty}$ on the transverse canonical
model $M_{\mathrm{can}}$ of $M$ which is a $S^{1}$-orbibundle over the unique
singular K\"{a}hler-Einstein normal projective variety $(Z_{\mathrm{can}%
},\omega_{KE})$. Here $Z_{\mathrm{can}}$ is the canonical model of $Z.$
\end{theorem}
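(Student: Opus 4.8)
\emph{Proof sketch.} The plan is to transplant the K\"ahler--Ricci flow analysis on minimal projective surfaces and $3$-folds of general type of Tian--Zhang \cite{tz2} and Gross--Song--Weinkove \cite{gsw} to the transverse geometry of the characteristic foliation, the decisive new analytic input being the uniform $L^{4}$-bound on the transverse Ricci curvature established in this paper. \textbf{Step 1 (reduction to the leaf space).} Since $M$ is quasi-regular with well-formed leaf space, the first structure theorem (Proposition \ref{P21}) gives an orbifold Riemannian submersion $\pi\colon(M,g,\omega)\to(Z,h,\omega_{h})$ with $\omega=\pi^{\ast}\omega_{h}$; the transverse K\"ahler data of $M$ coincide with the orbifold K\"ahler data of $(Z,\omega_{h})$, so $\mathrm{Ric}^{T}_{\omega}=\pi^{\ast}\mathrm{Ric}_{\omega_{h}}$, the transverse canonical class corresponds to $K_{Z}$, and the normalized Sasaki--Ricci flow \eqref{0} is the $\pi$-pullback of the normalized orbifold K\"ahler--Ricci flow $\partial_{t}\omega_{h}=-\mathrm{Ric}_{\omega_{h}}-\omega_{h}$ on $Z$. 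The hypothesis that $M$ is a smooth transverse minimal model of general type is then equivalent to $K_{Z}$ being nef and big. Long-time existence follows from the Sasaki/orbifold analogue of the Tsuji--Tian--Zhang criterion (and from Smoczyk--Wang--Zhang \cite{swz} when $K_{M}^{T}$ is ample), and the basic K\"ahler class evolves linearly, $[\omega_{h}(t)]=2\pi c_{1}(K_{Z})+e^{-t}\big([\omega_{h}(0)]-2\pi c_{1}(K_{Z})\big)\longrightarrow 2\pi c_{1}(K_{Z})$, which is nef and big but in general degenerate.

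\textbf{Steps 2--3 (a priori estimates and local smooth convergence).} Fix a smooth orbifold volume form $\Omega$ with $-\mathrm{Ric}(\Omega)=\chi\in 2\pi c_{1}(K_{Z})$ and set $\hat{\omega}_{t}=\chi+e^{-t}(\omega_{h}(0)-\chi)$, so that writing $\omega_{h}(t)=\hat{\omega}_{t}+\sqrt{-1}\,\partial\bar{\partial}u(t)$ the flow becomes the parabolic transverse Monge--Amp\`ere equation
\begin{equation*}
\partial_{t}u=\log\frac{(\hat{\omega}_{t}+\sqrt{-1}\,\partial\bar{\partial}u)^{n}}{\Omega}-u,\qquad u(0)=0.
\end{equation*}
One first proves the uniform bound $\sup_{t\ge0}\|u(t)\|_{L^{\infty}}<\infty$ as in \cite{tz2}: the upper bound from the maximum principle together with the a priori volume bound, and the lower bound from bigness of $K_{Z}$ --- Kodaira's lemma yields a K\"ahler current in $2\pi c_{1}(K_{Z})$ with analytic singularities along the exceptional locus $E$ of the birational morphism $\mu\colon Z\to Z_{\mathrm{can}}$, and an orbifold Demailly-type regularization turns it into the required barrier. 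Here $E$ is the union of the floating $(-2)$-curves contracted by $\mu$; on $Z\setminus E$ the singular K\"ahler--Einstein metric $\omega_{KE}$ on $Z_{\mathrm{can}}$, whose existence and uniqueness come from the Monge--Amp\`ere theory for klt varieties (Eyssidieux--Guedj--Zeriahi, orbifold version), pulls back to a smooth reference metric. The transverse Chern--Lu (Schwarz lemma) inequality comparing $\omega_{h}(t)$ with $\mu^{\ast}\omega_{KE}$, combined with the $C^{0}$-estimate, gives on each $K\Subset Z\setminus E$ a two-sided bound $C_{K}^{-1}\,\mu^{\ast}\omega_{KE}\le\omega_{h}(t)\le C_{K}\,\mu^{\ast}\omega_{KE}$, and the transverse Evans--Krylov and Calabi estimates bootstrap this to uniform $C^{\infty}(K)$ bounds. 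Hence $\omega_{h}(t)\to\omega_{KE}$ in $C^{\infty}_{\mathrm{loc}}(Z\setminus E)$, and pulling back by $\pi$, $\omega(t)\to\omega_{\infty}:=\pi^{\ast}\omega_{KE}$ smoothly on $M\setminus\pi^{-1}(E)$.

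\textbf{Step 4 (global Cheeger--Gromov convergence).} Here the uniform $L^{4}$-bound on $\mathrm{Ric}^{T}_{\omega(t)}$ enters, together with the non-collapsing volume lower bound furnished by the $C^{0}$-estimate; the exponent $4$ exceeds the complex dimension precisely when $n\le3$, which is what makes this bound effective. For $n=2$ the leaf space is a real four-orbifold, so $\int_{Z}|\mathrm{Rm}_{\omega_{h}(t)}|^{2}$ is controlled by the fixed Gauss--Bonnet and signature orbifold characteristic numbers plus $\int_{Z}|\mathrm{Ric}_{\omega_{h}(t)}|^{2}$, hence is uniformly bounded; the orbifold compactness theorem of Anderson and Bando--Kasue--Nakajima then extracts a Cheeger--Gromov subsequential limit which is a K\"ahler orbifold with finitely many isolated orbifold points and which, by Steps 2--3, must coincide with $(Z_{\mathrm{can}},\omega_{KE})$ --- so the floating $(-2)$-curves are contracted to orbifold points. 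For $n=3$ the same argument runs with the $L^{4}$-Ricci bound itself supplying the compactness. Lifting along the $S^{1}$-orbibundle $M_{\mathrm{can}}\to Z_{\mathrm{can}}$ gives Cheeger--Gromov convergence $(M,\omega(t))\to(M_{\mathrm{can}},\omega_{\infty})$, and uniqueness of the solution of the degenerate Monge--Amp\`ere equation in $2\pi c_{1}(K_{Z})$ (Boucksom--Eyssidieux--Guedj--Zeriahi) upgrades subsequential convergence to convergence of the full flow.

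\textbf{Main obstacle.} The crux is two-fold: proving the $C^{0}$-estimate when the limiting class $2\pi c_{1}(K_{Z})$ is only nef and big, so that the reference form genuinely degenerates along $E$; and, more seriously, ruling out curvature concentration along the contracted $(-2)$-curves so that the local $C^{\infty}$ convergence of Steps 2--3 upgrades to \emph{global} Cheeger--Gromov convergence --- it is exactly here that the uniform $L^{4}$-bound on the transverse Ricci curvature is indispensable. A pervasive secondary difficulty is that each of these estimates must be performed equivariantly in the Reeb direction and carried through the foliation cyclic-quotient singularities of type $\frac{1}{r}(1,a)$.
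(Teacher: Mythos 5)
Your proposal follows essentially the same route as the paper: reduction to the orbifold leaf space via the first structure theorem, the Tsuji--Tian--Zhang $C^{0}$ and local higher-order estimates (via the Sasaki Kodaira lemma and parabolic Schwarz lemma) giving $C^{\infty}_{\mathrm{loc}}$ convergence away from the null locus of $K_{M}^{T}$, the uniform $L^{4}$ bound on the transverse Ricci curvature (effective precisely because $4>n$ when $n\le 3$), and non-collapsing plus integral-Ricci compactness to identify the global Cheeger--Gromov limit with the transverse canonical model. The one caveat is that for $n=3$ "the same argument" cannot be the Anderson/Bando--Kasue--Nakajima $\epsilon$-regularity you cite for $n=2$ (that would require $L^{n}$ control of the full curvature tensor, which is unavailable); the paper instead uses Perelman-type non-collapsing of transverse balls together with the Petersen--Wei and Cheeger--Colding--Tian structure theory for limits under $L^{p}$ Ricci bounds with $p>n$, which your sketch gestures at but should state explicitly.
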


\begin{remark}
1. Note that the same as in the K\"{a}hler-Ricci flow, we have the same
transversal holomorphic foliation ($\xi$ is fixed) but with the new transverse
K\"{a}hler structure under the Sasaki-Ricci flow.

2. By the second
structure theorem on a Sasakian manifold, %
if $M$ admits an irregular Sasakian structure, it admits many locally free
circle actions which is our starting point for a quasi-regular case.

3. If $c_{1}^{B}(M)\leq0,$ then $K_{M}^{T}$ is nef and semi-ample. Moreover,
$(M,\eta_{0},\xi_{0},\Phi_{0},g_{0},\omega_{0})$ will be a compact
quasi-regular Sasakian $5$-manifold (\cite[Theorem 8.1.14]{bg}). Thus
$K_{M}^{T}$ is nef and big can be replaced by $c_{1}^{B}(M)\leq0$ and
$(c_{1}^{B}(M))^{2}>0$ in a compact Sasakian $5$-manifold without the
assumption of quasi-regularity.

4. In this paper, we assume that $M$\ is a compact quasi-regular transverse
Sasakian manifold and the space $Z$ of leaves is well-formed which means its
orbifold singular locus and algebro-geometric singular locus coincide,
equivalently $Z$ has no branch divisors. However when its leave space $Z$ is
not well-formed, we conjecture that there is a Sasaki analogue of analytic Log
minimal model program with respect to $K_{Z}+[\Delta]$ via the conical
Sasaki-Ricci flow. This is the odd dimensional counterpart of the conical
K\"{a}hler-Ricci flow (\cite{lz}, \cite{shen}). We hope to address this issue
in the near future.
\end{remark}

For $n=2,$ one can show that the limit is a smooth K\"{a}hler-Einstein
orbifold $Z_{\mathrm{can}}$ with finite orbifold points by a classical
argument of removing isolated singularities due to Anderson \cite{a},
Bando-Kasue-Nakajima \cite{bkn}, and Tian \cite{t}. More precisely, if $M$ is
a compact quasi-regular Sasakian $5$-manifold ($n=2$), then the singular set
$S\subset Z_{\infty}$ is dimension $0$ and $h_{\infty}$ will be an orbifold
K\"{a}hler-Einstein metric on $Z_{\infty}.$ More precisely, the solution
$\omega(t)$ of the normalized Sasaki-Ricci flow on $M$ starting with any
initial Sasakian metric $\omega_{0}$ is continuous through finitely many
contraction surgeries (\cite{clw}) in the Gromov-Hausdorff topology for
$t\in\lbrack0,\infty)$ and converges in the Cheeger-Gromov sense to the unique
Sasaki $\eta$-Einstein orbifold metric on the canonical model $M_{\mathrm{can}%
\text{ }}$ of $M$ which is a $S^{1}$-orbibundle over the unique
K\"{a}hler-Einstein orbifold surface $(Z_{\mathrm{can}\text{ }},\omega_{KE})$
with finite point orbifold singularities.

\begin{corollary}
\label{C11}Let $(M,\eta_{0},\xi_{0},\Phi_{0},g_{0},\omega_{0})$ be a compact
quasi-regular Sasakian $5$-manifold and its leave space $Z$ of the
characteristic foliation be well-formed. Suppose that $(M,\eta_{0},\xi
_{0},\Phi_{0},g_{0},\omega_{0})$ is a smooth transverse minimal model of
general type and $\omega(t)$ is a solution to the normalized Sasaki-Ricci flow
(\ref{0}). Then $(M,\omega(t))$ converges in the Cheeger-Gromov sense to the
unique Sasaki $\eta$-Einstein orbifold metric $\omega_{\infty}$
\[%
\begin{array}
[c]{c}%
\mathrm{Ric}_{\omega_{\infty}}^{T}=-\omega_{\infty}%
\end{array}
\]
on the transverse canonical model $M_{\mathrm{can}\text{ }}$ with finite
orbifold foliation singularities at a singular fibre $S^{1}$ on $M$. In
particular, the floating foliation $(-2)$-curves in $M$ will be contracted to
orbifold points by the Sasaki-Ricci flow as $t\rightarrow\infty.$
\end{corollary}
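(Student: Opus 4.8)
The plan is to upgrade the Cheeger-Gromov convergence of Theorem \ref{T11} in the case $n=2$ to the asserted orbifold regularity by combining the uniform $L^4$-bound of the transverse Ricci curvature along the flow with the classical $\varepsilon$-regularity and removable-singularity theory for Einstein four-manifolds. First I would record what Theorem \ref{T11} already gives for a compact quasi-regular Sasakian $5$-manifold: the normalized Sasaki-Ricci flow $\omega(t)$ converges, in the Cheeger-Gromov sense, to a singular Sasaki $\eta$-Einstein metric $\omega_\infty$ on the transverse canonical model $M_{\mathrm{can}}$, which by the first structure theorem is an $S^1$-orbibundle $\pi_\infty:M_{\mathrm{can}}\to Z_{\mathrm{can}}$ over the canonical model $Z_{\mathrm{can}}$ of $Z$, and downstairs $\omega_\infty$ descends to the unique singular K\"ahler-Einstein metric $\omega_{KE}$ with $\mathrm{Ric}_{\omega_{KE}}=-\omega_{KE}$ furnished by the (orbifold) Aubin-Yau solution on the canonical model; uniqueness of $\omega_\infty$ is then inherited from uniqueness of $\omega_{KE}$. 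Since $\xi$ is fixed along the flow, all estimates are transverse and basic, so the bound $\|\mathrm{Ric}^T_{\omega(t)}\|_{L^4}\le C$ descends on the quasi-regular quotient to a uniform bound $\|\mathrm{Ric}_{h(t)}\|_{L^4(Z,h(t))}\le C$ for the evolving orbifold K\"ahler metrics $h(t)$ on the complex surface $Z$.

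Next I would run the argument of Anderson \cite{a}, Bando-Kasue-Nakajima \cite{bkn} and Tian \cite{t} on the real four-dimensional K\"ahler-Einstein limit $(Z_\infty,h_\infty)$. The Gauss-Bonnet and signature formulas bound $\int|Rm|^2$ by $\int|\mathrm{Ric}|^2$ plus topological terms, so the $L^4$- (hence $L^2$-) Ricci bound yields a uniform $L^2$-bound on the full curvature; together with the volume non-collapsing along the normalized flow and the $\varepsilon$-regularity lemma, this forces the singular set $S\subset Z_\infty$ to be discrete, i.e. complex dimension $0$, and forces $(Z_\infty,h_\infty)$ to be a K\"ahler-Einstein \emph{orbifold} with finitely many quotient singularities, with tangent cone $\mathbb{C}^2/\Gamma$, $\Gamma\subset U(2)$ finite, at each singular point. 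Matching with the birational model identifies $Z_\infty=Z_{\mathrm{can}}$ and the singularities as the cyclic $\tfrac1r(1,a)$ quotient (log-terminal, del Pezzo type) singularities; lifting through the orbifold $S^1$-fibration, each orbifold point of $Z_{\mathrm{can}}$ corresponds to a singular fibre $S^1\subset M_{\mathrm{can}}$ carrying a foliation cyclic quotient singularity of type $\tfrac1r(1,a)$, and $\omega_\infty$ is then a genuine Sasaki $\eta$-Einstein orbifold metric with $\mathrm{Ric}^T_{\omega_\infty}=-\omega_\infty$, which is the claim. The statement about $(-2)$-curves follows from the structure of the Mori program of \cite{clw}: a floating foliation $(-2)$-curve $V$ in $M$ projects to a $(-2)$-curve on $Z$ that is $K_Z$-trivial, hence contracted by $Z\to Z_{\mathrm{can}}$ to a Du~Val ($A_k$-type) point; since the Sasaki-Ricci flow realizes this contraction in the Gromov-Hausdorff limit, the corresponding floating $(-2)$-curve in $M$ is collapsed to an orbifold point of $M_{\mathrm{can}}$ as $t\to\infty$.

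The main obstacle is the passage from the \emph{transverse} a priori estimates on the $5$-manifold to genuine geometric estimates on the possibly collapsing limit: one must verify that the $S^1$-fibre direction does not degenerate, so that the Cheeger-Gromov limit is again an $S^1$-orbibundle over a four-orbifold rather than a space of lower dimension, and that the limiting metric downstairs is exactly the canonical K\"ahler-Einstein current restricted to $Z_{\mathrm{can}}$ with no residual defect along the contracted curves. Securing the volume non-collapsing needed to feed the $\varepsilon$-regularity theorem, and ruling out that curvature concentrates along positive-dimensional (codimension-two) subsets rather than at isolated points, is precisely where the $L^4$-Ricci bound of Theorem \ref{T11}, rather than a mere $L^2$-bound, is essential; with that in hand the four-dimensional orbifold compactness theory applies verbatim and the corollary follows.
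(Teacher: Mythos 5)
Your proposal is correct and follows essentially the same route as the paper: the paper likewise deduces the corollary from the general convergence theorem (Theorem \ref{T62}, built on the $L^{4}$ transverse Ricci bound, Perelman-type non-collapsing of transverse balls, and the Cheeger-Colding-Tian/Petersen-Wei structure theory pushed down to $Z$ via the orbifold Riemannian submersion), and then for $n=2$ invokes exactly the Anderson/Bando-Kasue-Nakajima/Tian removable-singularity argument to conclude the singular set of $Z_{\infty}$ is zero-dimensional and the limit is an orbifold K\"ahler-Einstein metric, with the floating foliation $(-2)$-curve contraction read off from the Mori program of \cite{clw} and the Guo-Song-Weinkove-type extremal ray analysis. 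The points you flag as obstacles (non-collapsing of the $S^{1}$-fibre, using $p=4>n=2$ to rule out positive-dimensional curvature concentration) are precisely the ones the paper addresses via Lemma \ref{L61} and Theorem \ref{T61}.
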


Note that in our proof of the $L^{4}$-norm bound of the transverse Ricci
curvature\ on a compact quasi-regular Sasakian $(2n+1)$-manifold, all the
integrands are only involved with the transverse K\"{a}hler structure
$\omega(t)$ and basic sections. Then one expects that, under the Sasaki--Ricci
flow, the expressions involved behave essentially the same as in the
K\"{a}hler-Ricci flow when one applies the Weitzenb\H{o}ch type formulae and
integration by parts.

More precisely, our proof relies on the Cheeger-Colding-Tian \cite{cct}
regularity theorem for K\"{a}hler orbifolds and the uniform $L^{4}$-bound of
transverse Ricci curvature under the Sasaki-Ricci flow (\ref{0}) on a compact
quasi-regular Sasakian manifold where the transverse canonical line bundle is
nef and big. In the last section, we will add some remark about the Sasaki
analogue of Guo-Song-Weinkove \cite{gsw} arguments for the contraction on the
floating foliation $(-2)$-curves.

In section $3$ and section $4,$ we give some fundamental estimates for the
Sasaki-Ricci flow. In section $5,$ we derive the estimate on the $L^{4}$-bound
of the transverse Ricci curvature under the normalized Sasaki-Ricci flow. In
the last section, we give a proof of our main theorem by applying the
Cheeger-Colding-Tian structure theory for K\"{a}hler orbifolds (\cite{cct},
\cite{tz1} and \cite[Theorem 2.3]{tz2}) to study the structure of desired
limit spaces. At the end, we will add some remark about the Sasaki analogue of
Guo-Song-Weinkove \cite{gsw} arguments for the contraction on foliation $(-2)$-curves.

For a completeness, we give some preliminaries on structures theorems for
Sasakian structures, foliation normal local coordinates, basic transverse
holomorphic line bundles and its associated basic divisors, and the type of
singularities in Sasakian manifolds in the section $2$ and Appendix.

\section{Preliminaries}

We will address the preliminary notions on the foliation normal coordinate and
basic cohomology and Type II deformation in a Sasakian manifold. We refer to
\cite{bg}, \cite{fow}, and references therein for some details. We will also
address on the Sasakian structure, the leave space and its foliation
singularities, basic holomorphic line bundles and basic divisors over Sasakian
manifolds in the appendix for the completeness.

\subsection{Sasakian Manifolds}

Let $(M,g,\nabla)$ be a Riemannian $(2n+1)$-manifold. We say $(M,g)$ is called
Sasaki if the cone
\[
(C(M),\overline{g}):=(\mathbb{R}^{+}\times M\mathbf{,}dr^{2}+r^{2}g)
\]
such that $(C(M),\overline{g},J,\overline{\omega})$ is K\"{a}hler with
\[%
\begin{array}
[c]{c}%
\overline{\omega}=\frac{1}{2}\sqrt{-1}\partial\overline{\partial}r^{2}.
\end{array}
\]
The function $\frac{1}{2}r^{2}$ is hence a global K\"{a}hler potential for the
cone metric. As $\{r=1\}=\{1\}\times M\subset C(M)$, we may define%
\[%
\begin{array}
[c]{c}%
\overline{\xi}=J(r\frac{\partial}{\partial r})
\end{array}
\]
and the Reeb vector field $\xi$ on $M$
\[%
\begin{array}
[c]{c}%
\xi=J(\frac{\partial}{\partial r}).
\end{array}
\]
Also
\[%
\begin{array}
[c]{c}%
\overline{\eta}(\cdot)=\frac{1}{2}\overline{g}(\xi,\cdot)
\end{array}
\]
and the contact $1$-form $\eta$ on $TM$
\[
\eta(\cdot)=g(\xi,\cdot).
\]
Then $\xi$ is the killing vector field with unit length such that
\[
\eta(\xi)=1\ \text{\textrm{and}}\ d\eta(\xi,X)=0.
\]
In fact, the tensor field of type $(1,1)$, defined by $\Phi(Y)=\nabla_{Y}\xi$,
satisfies the condition%
\[
(\nabla_{X}\Phi)(Y)=g(\xi,Y)X-g(X,Y)\xi
\]
for any pair of vector fields $X$ and $Y$ on $M$. Then such a triple
$(\eta,\xi,\Phi)$ is called a Sasakian structure on a Sasakian manifold
$(M,g).$ Note that the Riemannian curvature satisfying the following
\[
R(X,\xi)Y=g(\xi,Y)X-g(X,Y)\xi
\]
for any pair of vector fields $X$ and $Y$ on $M$. In particular, the sectional
curvature of every section containing $\xi$ equals one.

\subsection{Foliation Normal Local Coordinate}

Let $(M,\eta,\xi,\Phi,g)$ be a compact Sasakian $(2n+1)$-manifold with
$g(\xi,\xi)=1$ and the integral curves of $\xi$ are geodesics. For any point
$p\in M$, we can construct local coordinates in a neighborhood of $p$ which
are simultaneously foliated and Riemann normal coordinates (\cite{gkn}). That
is, we can find Riemann normal coordinates $\{x,z^{1},z^{2},\cdots,z^{n}\}$ on
a neighborhood $U$ of $p$, such that $\frac{\partial}{\partial x}=\xi$ on $U$.
Let $\{U_{\alpha}\}_{\alpha\in A}$ be an open covering of the Sasakian
manifold and
\[
\pi_{\alpha}:U_{\alpha}\rightarrow V_{\alpha}\subset%
\mathbb{C}
^{n}%
\]
submersion such that $\pi_{\alpha}\circ\pi_{\beta}^{-1}:\pi_{\beta}(U_{\alpha
}\cap U_{\beta})\rightarrow\pi_{\alpha}(U_{\alpha}\cap U_{\beta})$ is
biholomorphic. On each $V_{\alpha},$ there is a canonical isomorphism
\[
d\pi_{\alpha}:D_{p}\rightarrow T_{\pi_{\alpha}(p)}V_{\alpha}%
\]
for any $p\in U_{\alpha},$ where $D=\ker\xi\subset TM.$ Since $\xi$ generates
isometrics, the restriction of the Sasakian metric $g$ to $D$ gives a
well-defined Hermitian metric $g_{\alpha}^{T}$ on $V_{\alpha}.$ This Hermitian
metric in fact is K\"{a}hler. More precisely, let $z^{1},z^{2},\cdots,z^{n}$
be the local holomorphic coordinates on $V_{\alpha}$. We pull back these to
$U_{\alpha}$ and still write the same. Let $x$ be the coordinate along the
leaves with $\xi=\frac{\partial}{\partial x}.$ Then we have the foliation
local coordinate $\{x,z^{1},z^{2},\cdots,z^{n}\}$ on $U_{\alpha}\ $and
$(D\otimes%
\mathbb{C}
)$ is spanned by the form
\[%
\begin{array}
[c]{c}%
Z_{\alpha}=\left(  \frac{\partial}{\partial z^{\alpha}}-\theta\left(
\frac{\partial}{\partial z^{\alpha}}\right)  \frac{\partial}{\partial
x}\right)  ,\ \alpha=1,2,\cdots,n.
\end{array}
\]
Moreover%
\[%
\begin{array}
[c]{c}%
\Phi=\sqrt{-1}\left(  \frac{\partial}{\partial z^{j}}+\sqrt{-1}h_{j}%
\frac{\partial}{\partial x}\right)  \otimes dz^{j}+conj
\end{array}
\]
and
\[%
\begin{array}
[c]{c}%
\eta=dx-\sqrt{-1}h_{j}dz^{j}+\sqrt{-1}h_{\overline{j}}d\overline{z}^{j}.
\end{array}
\]
Here $h$ is basic: $\frac{\partial h}{\partial x}=0$ and $h_{j}=\frac{\partial
h}{\partial z^{j}},h_{j\overline{l}}=\frac{\partial^{2}h}{\partial
z^{j}\partial\overline{z}^{l}}$ with the normal coordinate
\begin{equation}
h_{j}(p)=0,\text{ }h_{j\overline{l}}(p)=\delta_{j}^{l},\text{ }dh_{j\overline
{l}}(p)=0.\label{AAA3}%
\end{equation}
A frame
\[%
\begin{array}
[c]{c}%
\{\frac{\partial}{\partial x},\text{ }Z_{j}=\left(  \frac{\partial}{\partial
z^{j}}+\sqrt{-1}h_{j}\frac{\partial}{\partial x}\right)  ,\ j=1,2,\cdots,n\}
\end{array}
\]
and the dual
\[
\{\eta,dz^{j},\ j=1,2,\cdots,n\}
\]
with
\[
\lbrack Z_{i},Z_{j}]=[\xi,Z_{j}]=0.
\]

Since $i(\xi)d\eta=0,$
\[%
\begin{array}
[c]{c}%
d\eta(Z_{\alpha},\overline{Z_{\beta}})=d\eta(\frac{\partial}{\partial
z^{\alpha}},\frac{\partial}{\overline{\partial}z^{\beta}}).
\end{array}
\]
Then the K\"{a}hler $2$-form $\omega_{\alpha}^{T}$ of the Hermitian metric
$g_{\alpha}^{T}$ on $V_{\alpha},$ which is the same as the restriction of the
Levi form $d\eta$ to $\widetilde{D_{\alpha}^{n}}$, the slice $\{x=$
\textrm{constant}$\}$ in $U_{\alpha},$ is closed. The collection of K\"{a}hler
metrics $\{g_{\alpha}^{T}\}$ on $\{V_{\alpha}\}$ is so-called a transverse
K\"{a}hler metric. We often refer to $d\eta$ as the K\"{a}hler form of the
transverse K\"{a}hler metric $g^{T}$ in the leaf space $\widetilde{D^{n}}.$

The K\"{a}hler form $d\eta$ on $D$ and the K\"{a}hler metric $g^{T}$ is define
such that
\[
g=g^{T}+\eta\otimes\eta
\]
and
\[%
\begin{array}
[c]{c}%
g_{i\overline{j}}^{T}=g^{T}(\frac{\partial}{\partial z^{i}},\frac{\partial
}{\partial\overline{z}^{j}})=d\eta(\frac{\partial}{\partial z^{i}},\Phi
\frac{\partial}{\partial\overline{z}^{j}})=2h_{i\overline{j}}.
\end{array}
\]
In terms of the normal coordinate, we have%
\[%
\begin{array}
[c]{c}%
g^{T}=g_{i\overline{j}}^{T}dz^{i}d\overline{z}^{j},\text{ }\omega=d\eta
=2\sqrt{-1}h_{i\overline{j}}dz^{i}\wedge d\overline{z}^{j}.
\end{array}
\]

The transverse Ricci curvature $\mathrm{Ric}^{T}$ of the Levi-Civita
connection $\nabla^{T}$ associated to $g^{T}$ is
\[
\mathrm{Ric}^{T}=\mathrm{Ric}+2g^{T}%
\]
and%
\[
R^{T}=R+2n.
\]
The transverse Ricci form $\rho^{T}$
\[%
\begin{array}
[c]{c}%
\rho^{T}=\mathrm{Ric}^{T}(J\cdot,\cdot)=-\sqrt{-1}R_{i\overline{j}}^{T}%
dz^{i}\wedge d\overline{z}^{j}%
\end{array}
\]
with
\[%
\begin{array}
[c]{c}%
R_{i\overline{j}}^{T}=-\frac{\partial_{2}}{\partial z^{i}\partial\overline
{z}^{j}}\log\det(g_{\alpha\overline{\beta}}^{T})
\end{array}
\]
and it is a closed basic $(1,1)$-form
\[
\rho^{T}=\rho+2d\eta.
\]

\subsection{Basic Cohomology and Type II Deformation in a Sasakian Manifold}

\begin{definition}
Let $(M,\eta,\xi,\Phi,g)$ be a Sasakian $(2n+1)$-manifold. Define a $p$-form
$\gamma$ is called basic if%
\[
i(\xi)\gamma=0\text{ and }\mathcal{L}_{\xi}\gamma=0.
\]

\end{definition}

Let $\Lambda_{B}^{p}$ be the sheaf of germs of basic $p$-forms and $\Omega
_{B}^{p}$ be the set of all global sections of $\Lambda_{B}^{p}$. It is easy
to check that $d\gamma$ is basic if $\gamma$ is basic. Set $d_{B}%
=d|_{\Omega_{B}^{p}},$ then%
\[
d_{B}:\Omega_{B}^{p}\rightarrow\Omega_{B}^{p+1}.
\]
We then have the well-defined operators
\[
d_{B}:=\partial_{B}+\overline{\partial}_{B}%
\]
with
\[
\partial_{B}:\Lambda_{B}^{p,q}\rightarrow\Lambda_{B}^{p+1,q}%
\]
and
\[
\overline{\partial}_{B}:\Lambda_{B}^{p,q}\rightarrow\Lambda_{B}^{p,q+1}.
\]
Then for $d_{B}^{c}:=\frac{1}{2}\sqrt{-1}(\overline{\partial}_{B}-\partial
_{B}),$ we have%
\[
d_{B}d_{B}^{c}=\sqrt{-1}\partial_{B}\overline{\partial}_{B},\text{ }d_{B}%
^{2}=(d_{B}^{c})^{2}=0.
\]
The basic Laplacian is defined by
\[
\Delta_{B}:=d_{B}d_{B}^{\ast}+d_{B}^{\ast}d_{B}.
\]
Then we have the basic de Rham complex $(\Omega_{B}^{\ast},d_{B})$ and the
basic Dolbeault complex $(\Omega_{B}^{p,\ast},\overline{\partial}_{B})$ and
its cohomology group\ $H_{B}^{\ast}(M,\mathbb{R})$ (\cite{eka}).

\begin{definition}
(i) We define the basic cohomology of the foliation $F_{\xi}$by
\[
H_{B}^{\ast}(F_{\xi}):=H_{B}^{\ast}(M,\mathbb{R}).
\]
Then by transverse Hodge decomposition and transverse Serre duality
\[
H_{B}^{p,q}(F_{\xi})\simeq H_{B}^{q,p}(F_{\xi})
\]
and the cohomology of the leaf space $Z=M/U(1)$ to this basic cohomology of
the foliation
\[
H_{orb}^{\ast}(Z,\mathbb{R})=H_{B}^{\ast}(F_{\xi}):=H_{B}^{\ast}%
(M,\mathbb{R}).
\]

(ii) Define the basic first Chern class $c_{1}^{B}(M)$ by
\[
c_{1}^{B}=[\rho^{T}]_{B}%
\]
and the transverse Einstein (Sasaki $\eta$-Einstein) equation up to a
$D$-homothetic deformation
\[
\lbrack\rho^{T}]_{B}=\varkappa\lbrack d\eta]_{B},\text{ }\varkappa=-1,0,1.
\]
Basic $k$-th Chern class $c_{k}^{B}(M)$ is represented by a closed basic
$(k,k)$-form $\gamma_{k}$ which is determined by the formula%
\[%
\begin{array}
[c]{c}%
\det\left(  I_{n}+\frac{\sqrt{-1}}{2\pi}\Omega^{T}\right)  =1+\gamma
_{1}+\cdots+\gamma_{k}.
\end{array}
\]
Here $\Omega^{T}$ is the curvature $2$-form of type basic $(1,1)$ with respect
to the transverse connection $\nabla^{T}$.
\end{definition}

\begin{definition}
We define
Type II deformations of Sasakian structures %
$(M,\eta,\xi,\Phi,g)$ by fixing the $\xi$ and varying $\eta$. That is, for
$\varphi\in\Omega_{B}^{0}$, define
\[
\widetilde{\eta}=\eta+d_{B}^{c}\varphi,
\]
then
\[%
\begin{array}
[c]{c}%
d\widetilde{\eta}=d\eta+d_{B}d_{B}^{c}\varphi=d\eta+\sqrt{-1}\partial
_{B}\overline{\partial}_{B}\varphi
\end{array}
\]
and
\[%
\begin{array}
[c]{c}%
\widetilde{\omega}=\omega+\sqrt{-1}\partial_{B}\overline{\partial}_{B}\varphi.
\end{array}
\]

\end{definition}

Note that
we have %
the same transversal holomorphic foliation ($\xi$ is fixed) but with the new
K\"{a}hler structure on the K\"{a}hler cone $C(M)$ and new contact bundle
$\widetilde{D}$: $\widetilde{\omega}=dd^{c}\widetilde{r},$ $\widetilde{r}%
=re^{\varphi}$. The same holomorphic structure: $r\frac{\partial}{\partial
r}=\widetilde{r}\frac{\partial}{\partial\widetilde{r}};$ $\xi=J(r\frac
{\partial}{\partial r})$ and $\xi+\sqrt{-1}r\frac{\partial}{\partial r}%
=\xi-\sqrt{-1}\Phi(\xi)$ is the holomorphic vector field on $C(M).$ Moreover,
we have%
\[%
\begin{array}
[c]{l}%
\widetilde{\Phi}=\Phi-\xi\otimes(d_{B}^{c}\varphi)\circ\Phi,\\
\widetilde{g}=d\widetilde{\eta}\circ(Id\otimes\widetilde{\Phi}%
)+\widetilde{\eta}\otimes\widetilde{\eta}.
\end{array}
\]
and
\[
\mathcal{L}_{\xi}\widetilde{\Phi}=\mathcal{L}_{\xi}\Phi=0.
\]

\section{Asymptotic Convergence of the Sasaki-Ricci Flow}

In this section, we will establish the Sasaki analogue of asymptotic
convergence of solutions of the K\"{a}hler-Ricci flow which is the starting
step to prove the main theorem in this paper.

\subsection{The Sasaki-Ricci Flow}

By a $\partial_{B}\overline{\partial}_{B}$-Lemma (\cite{eka}) in the basic
Hodge decomposition, there is a basic function $F:M\rightarrow\mathbb{R}$ such
that
\[
\rho^{T}(x,t)-\varkappa d\eta(x,t)=d_{B}d_{B}^{c}F=\sqrt{-1}\partial
_{B}\overline{\partial}_{B}F.
\]
We focus on finding a new $\eta$-Einstein Sasakian structure $(M,\xi
,\widetilde{\eta},\widetilde{\Phi},\widetilde{g})$ with
\[
\widetilde{\eta}=\eta+d_{B}^{c}\varphi,\text{ }\varphi\in\Omega_{B}^{0}%
\]
and
\[%
\begin{array}
[c]{c}%
\widetilde{g}^{T}=(g_{i\overline{j}}^{T}+\varphi_{i\overline{j}})dz^{i}\wedge
d\overline{z}^{j}=2\sqrt{-1}(h_{i\overline{j}}+\frac{1}{2}\varphi
_{i\overline{j}})dz^{i}\wedge d\overline{z}^{j}%
\end{array}
\]
such that
\[
\widetilde{\rho}^{T}=\varkappa d\widetilde{\eta}.
\]
Hence
\[
\widetilde{\rho}^{T}-\rho^{T}=\kappa d_{B}d_{B}^{c}\varphi-d_{B}d_{B}^{c}F
\]
and it follows\ that
\begin{equation}%
\begin{array}
[c]{c}%
\frac{\det(g_{\alpha\overline{\beta}}^{T}+\varphi_{\alpha\overline{\beta}}%
)}{\det(g_{\alpha\overline{\beta}}^{T})}=e^{-\kappa\varphi+F}.
\end{array}
\label{B}%
\end{equation}
This is a Sasakian analogue of the Monge-Ampere equation for the orbifold
version of Calabi-Yau Theorem (\cite{eka}).

Now we consider the Sasaki-Ricci flow on $M\times\lbrack0,T)$%
\begin{equation}%
\begin{array}
[c]{c}%
\frac{d}{dt}g^{T}(x,t)=-\mathrm{Ric}^{T}(x,t)+\varkappa g^{T}(x,t)
\end{array}
\label{2021}%
\end{equation}
or
\[%
\begin{array}
[c]{c}%
\frac{d}{dt}d\eta(x,t)=-\rho^{T}(x,t)+\varkappa d\eta(x,t).
\end{array}
\]
It is equivalent to consider%
\begin{equation}%
\begin{array}
[c]{c}%
\frac{d}{dt}\varphi=\log\det(g_{\alpha\overline{\beta}}^{T}+\varphi
_{\alpha\overline{\beta}})-\log\det(g_{\alpha\overline{\beta}}^{T}%
)+\kappa\varphi-F.
\end{array}
\label{C}%
\end{equation}
Note that, for any two Sasakian structures with the fixed Reeb vector field
$\xi,$ we have
\[
\mathrm{Vol}(M,g)=\mathrm{Vol}(M,g^{\prime})
\]
and
\[%
\begin{array}
[c]{c}%
\widetilde{\omega}^{n}\wedge\eta=(\sqrt{-1})^{n}\det(g_{\alpha\overline{\beta
}}^{T}+\varphi_{\alpha\overline{\beta}})dz^{1}\wedge d\overline{z}^{1}%
\wedge\cdots\wedge dz^{n}\wedge d\overline{z}^{n}\wedge dx.
\end{array}
\]

\subsection{Convergence of Solutions of the Sasaki-Ricci Flow}

Let $(M,\eta_{0},\xi_{0},\Phi_{0},g_{0},\omega_{0})$ be a compact
quasi-regular Sasakian $(2n+1)$-manifold and its the space $Z$ of leaves of
the characteristic foliation be well-formed\textbf{. }We consider\textbf{\ }a
solution $\omega=\omega(t)$ of the Sasaki-Ricci flow%
\begin{equation}%
\begin{array}
[c]{c}%
\frac{\partial}{\partial t}\omega(t)=-\mathrm{Ric}_{\omega(t)}^{T},\text{
}\omega(0)=\omega_{0}.
\end{array}
\label{1}%
\end{equation}
As long as the solution exists, the cohomology class $[\omega(t)]_{B}$ evolves
by%
\[%
\begin{array}
[c]{c}%
\frac{\partial}{\partial t}\left[  \omega(t)\right]  _{B}=-c_{1}^{B}(M),\text{
}\left[  \omega(0)\right]  _{B}=\left[  \omega_{0}\right]  _{B},
\end{array}
\]
and solving this ordinary differential equation gives%
\[%
\begin{array}
[c]{c}%
\left[  \omega(t)\right]  _{B}=\left[  \omega_{0}\right]  _{B}-tc_{1}^{B}(M).
\end{array}
\]
We see that a necessary condition for the Sasaki-Ricci flow to exist for $t>0
$ such that%
\[%
\begin{array}
[c]{c}%
\left[  \omega_{0}\right]  _{B}-tc_{1}^{B}(M)>0.
\end{array}
\]
This necessary condition is in fact sufficient. In fact we define
\[%
\begin{array}
[c]{c}%
T_{0}:=\sup\{t>0|\text{ }\left[  \omega_{0}\right]  _{B}-tc_{1}^{B}(M)>0\}.
\end{array}
\]
That is to say that
\begin{equation}%
\begin{array}
[c]{c}%
\left[  \omega_{0}\right]  _{B}-T_{0}c_{1}^{B}(M)\in\overline{C_{M}^{B}}%
\end{array}
\label{E}%
\end{equation}
which is a nef class. Here
\[%
\begin{array}
[c]{c}%
C_{M}^{B}=\{[\alpha]_{B}\in H_{B}^{1,1}(M,\mathbb{R})|\text{ }\exists\text{
transverse K\"{a}hler metric }\omega>0\text{ such that }[\omega]_{B}%
=[\alpha]_{B}\}.
\end{array}
\]

For a representative $\chi\in-c_{1}^{B}(M)$, we can fix a transverse volume
form $\Omega$ on $(M,\xi_{0},\eta_{0},\Phi_{0},g_{0},\omega_{0})$ such that
\[%
\begin{array}
[c]{c}%
\Omega\wedge\eta_{0}=(\sqrt{-1})^{n}F(z_{1},\cdots,z_{n})dz_{1}\wedge
d\overline{z}_{1}\wedge\cdots\wedge dz_{n}\wedge d\overline{z}_{n}\wedge dx
\end{array}
\]
with
\[%
\begin{array}
[c]{c}%
\sqrt{-1}\partial_{B}\overline{\partial}_{B}\log F=-\mathrm{Ric}^{T}%
(\Omega)=\chi
\end{array}
\]
and \
\[%
\begin{array}
[c]{c}%
\int_{M}\Omega\wedge\eta_{0}=\int_{M}\omega_{0}^{n}\wedge\eta_{0}.
\end{array}
\]
We choose a reference (transverse) K\"{a}hler metric%
\[%
\begin{array}
[c]{c}%
\widehat{\omega}_{t}:=\omega_{0}+t\chi.
\end{array}
\]
Then the corresponding transverse parabolic complex Monge-Ampere equation to
(\ref{1}) on $M\times\lbrack0,T_{0})$ is%
\begin{equation}
\left\{
\begin{array}
[c]{rll}%
\frac{\partial}{\partial t}\varphi(x,t) & = & \log\frac{(\widehat{\omega}%
_{t}+\sqrt{-1}\partial_{B}\overline{\partial}_{B}\varphi)^{n}\wedge\eta_{0}%
}{\Omega\wedge\eta_{0}},\\
\widehat{\omega}_{t} & = & \omega_{0}+t\chi.\\
\sqrt{-1}\partial_{B}\overline{\partial}_{B}\log\Omega & = & \chi,\\
\widehat{\omega}_{t}+\sqrt{-1}\partial_{B}\overline{\partial}_{B}\varphi & > &
0,\\
\varphi(0) & = & 0.
\end{array}
\right. \label{2}%
\end{equation}

Based on \cite{sw1}, \cite{t} and references therein as in the K\"{a}hler
case, we have the following cohomological characterization for the maximal
solution of the Sasaki-Ricci flow (we refer to the proof of Theorem \ref{T32}
as below):

\begin{theorem}
There exists a unique maximal solution $\omega(t)$ of the Sasaki-Ricci flow
(\ref{1}) on $M$ for $t\in\lbrack0,T_{0})$.
\end{theorem}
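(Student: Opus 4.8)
The plan is to follow the standard parabolic-PDE strategy used for the Kähler–Ricci flow (Cao, Tsuji, Tian–Zhang, Song–Weinkove), transported verbatim to the transverse/basic setting, which is legitimate because all the quantities involved — $\widehat{\omega}_t$, $\Omega$, $\varphi$ — are basic, and the operators $\partial_B,\overline{\partial}_B,\Delta_B$ satisfy the same formal identities and the same $\partial_B\overline{\partial}_B$-Lemma on a Sasakian manifold. The reduction is exactly equation (\ref{2}): a solution $\omega(t)$ of (\ref{1}) on $[0,T_0)$ corresponds to a solution $\varphi(x,t)$ of the transverse parabolic complex Monge–Ampère equation, and conversely $\omega(t)=\widehat{\omega}_t+\sqrt{-1}\,\partial_B\overline{\partial}_B\varphi(t)$. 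So it suffices to prove that (\ref{2}) has a unique solution on $[0,T_0)$ and not beyond.

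First I would establish \emph{short-time existence and uniqueness}: since (\ref{2}) is strictly parabolic (the linearization is $\Delta_{\omega(t)}$ acting on basic functions, which is uniformly elliptic as long as $\omega(t)>0$), the standard implicit-function/inverse-function argument on the Banach spaces of basic $C^{k,\alpha}$ functions gives a unique solution on some maximal interval $[0,T_{\max})$ with $T_{\max}\le T_0$ (it cannot exceed $T_0$ because $[\omega(t)]_B=[\omega_0]_B-tc_1^B(M)$ must stay in the transverse Kähler cone $C_M^B$). Uniqueness is immediate from parabolicity via the maximum principle applied to the difference of two solutions. The substance is the reverse inequality $T_{\max}\ge T_0$, i.e. a priori estimates on $[0,T')$ for every $T'<T_0$ that prevent the flow from degenerating before time $T_0$.

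For the a priori estimates I would argue as in Tsuji/Tian–Zhang: on $[0,T']$ with $T'<T_0$, the class $[\widehat{\omega}_{T'}]_B$ is still transverse Kähler, so after possibly modifying the reference form one gets a uniform $C^0$ bound on $\varphi$ and on $\dot\varphi$ from the maximum principle (differentiating (\ref{2}) in $t$ and using the concavity of $\log\det$, together with the sign of $\chi$ on the relevant subclasses). Next, the key second-order estimate: a Chern–Lu / Aubin–Yau type inequality for $\operatorname{tr}_{\omega(t)}\widehat{\omega}_{T'}$ (equivalently a parabolic Schwarz lemma) bounds $\omega(t)$ from above by $C\,\widehat{\omega}_{T'}$, where the transverse bisectional curvature of the fixed reference metric enters exactly as in the Kähler case — the Weitzenböck computation only sees the transverse Kähler structure. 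Combined with $\dot\varphi$-bounds this gives $C^{-1}\widehat{\omega}_{T'}\le\omega(t)\le C\,\widehat{\omega}_{T'}$ on $[0,T']$, hence uniform parabolicity; then the transverse Evans–Krylov theory and Schauder bootstrapping yield uniform $C^\infty_B$ bounds, so the solution extends past $T'$. Letting $T'\uparrow T_0$ gives a solution on all of $[0,T_0)$, and the cohomological obstruction shows it is maximal.

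The main obstacle I expect is purely technical rather than conceptual: one must make sure that every classical ingredient — the $\partial_B\overline{\partial}_B$-Lemma, the maximum principle for $\Delta_B$, the transverse Schwarz lemma, and especially transverse Evans–Krylov and Schauder estimates — is available in the foliated category with the stated regularity, uniformly in the leaf direction. This is handled by the foliation normal coordinates of Section 2 (locally the basic objects are genuine Kähler objects on the local quotients $V_\alpha$, and the estimates are local and invariant under the transition biholomorphisms), so no new analytic input is needed beyond citing \cite{swz}, \cite{sw1}, \cite{t}; the role of this theorem is to record the cohomological characterization $T_{\max}=T_0$ precisely, which is what the convergence analysis in the later sections will use.
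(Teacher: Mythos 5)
Your proposal is correct and follows essentially the same route the paper intends: the paper gives no separate proof of this statement but explicitly defers to \cite{sw1}, \cite{t} and to the a priori estimates in the proof of Theorem \ref{T32}, i.e.\ precisely the reduction to the transverse parabolic Monge--Amp\`ere equation (\ref{2}), short-time existence by parabolicity on basic functions, and Tsuji/Tian--Zhang-type zeroth, first and second order estimates (with a modified reference form in the K\"ahler class $[\omega_0]_B - T'c_1^B(M)$ for each $T'<T_0$) followed by Evans--Krylov and Schauder bootstrapping in foliation normal coordinates. Your remark that the obstruction to extension is purely cohomological, so that $T_{\max}=T_0$, is exactly the content the paper records.
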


Next if we assume that $K_{M}^{T}$ is nef, it follows from (\ref{D}) and
(\ref{E}) that $T_{0}=\infty.$ On the other hand if $K_{M}^{T}$ is big also%
\[%
\begin{array}
[c]{c}%
\int_{M}(c_{1}^{B}(K_{M}^{T}))^{n}\wedge\eta_{0}>0.
\end{array}
\]
Now by Sasaki analogue of Kawamata base-point free theorem (\cite{clw}), we
obtain that $K_{M}^{T}$ is semi-ample, then there exists a $S^{1}$-equivariant
basic base-point free transverse holomorphic map
\[%
\begin{array}
[c]{c}%
\Psi:M\rightarrow(\mathbb{C}\mathrm{P}^{N},\omega_{FS})
\end{array}
\]
defined by the basic transverse holomorphic section $\{s_{0},s_{1}%
,\cdots,s_{N}\}$ of $H^{0}(M,(K_{M}^{T})^{m})$ which is $S^{1}$-equivariant
with respect to the weighted $\mathbb{C}^{\ast}$action. Here $N=\dim
H^{0}(M,(K_{M}^{T})^{m})-1$ for a large positive integer $m$ and
\[%
\begin{array}
[c]{c}%
\frac{1}{m}\Psi^{\ast}(\omega_{FS})=\widehat{\omega}_{\infty}\geq0.
\end{array}
\]

For the asymptotic behavior, we need to rescale the Sasaki-Ricci flow
(\ref{1}) to the normalized Sasaki-Ricci flow on $M\times\lbrack0,\infty)$ as
following:%
\begin{equation}
\left\{
\begin{array}
[c]{rll}%
\frac{\partial}{\partial t}\varphi(x,t) & = & \log\frac{(\widehat{\omega}%
_{t}+\sqrt{-1}\partial_{B}\overline{\partial}_{B}\varphi)^{n}\wedge\eta_{0}%
}{\Omega\wedge\eta_{0}}-\varphi,\\
\widehat{\omega}_{t} & = & e^{-t}\omega_{0}+(1-e^{-t})\widehat{\omega}%
_{\infty},\\
\sqrt{-1}\partial_{B}\overline{\partial}_{B}\log\Omega & = & \widehat{\omega
}_{\infty},\\
\widehat{\omega}_{t}+\sqrt{-1}\partial_{B}\overline{\partial}_{B}\varphi & > &
0,\\
\varphi(0) & = & 0.
\end{array}
\right. \label{3}%
\end{equation}
We observe that $\chi=\widehat{\omega}_{\infty}\in-c_{1}^{B}$ is a nonnegative
$(1,1)$-current K\"{a}hler metric. The starting point to show Theorem
\ref{T11}, we must be able to derive the following basic result which was
established for the K\"{a}hler-Ricci flow due to \cite{tsu}, \cite{t}, and
\cite{tz3}.

\begin{theorem}
\label{T32} Let $(M,\eta_{0},\xi_{0},\Phi_{0},g_{0},\omega_{0})$ be a compact
quasi-regular Sasakian $(2n+1)$-manifold and its the space $Z$ of leaves of
the characteristic foliation be well-formed\textbf{.} Suppose that $-c_{1}%
^{B}(M)\in\overline{C_{M}^{B}}$ and
\[%
\begin{array}
[c]{c}%
\int_{M}(-c_{1}^{B}(M))^{n}\wedge\eta_{0}>0.
\end{array}
\]
Then there exists a unique solution $\omega(t)$ of the Sasaki-Ricci flow
(\ref{1}) on $M\times\lbrack0,\infty).$ Furthermore, there exists an $\eta
$-Einstein metric $\omega_{\infty}$ on $M\backslash\mathrm{Null}(-c_{1}%
^{B}(M))$ which satisfies
\[%
\begin{array}
[c]{c}%
\mathrm{Ric}_{\omega_{\infty}}^{T}=-\omega_{\infty}%
\end{array}
\]
such that for any initial transverse K\"{a}hler metric $\omega_{0}$, the
rescaled metrics $\frac{\omega(t)}{t}$ converge smoothly on compact subsets of
$M\backslash\mathrm{Null}(-c_{1}^{B}(M))$ to $\omega_{\infty}$ as
$t\rightarrow\infty$.
\end{theorem}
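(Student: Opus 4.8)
The plan is to transplant the Tsuji--Tian--Zhang analysis of the normalized K\"{a}hler--Ricci flow on projective manifolds of general type \cite{tsu,t,tz3} to the transverse setting, exploiting that every object involved --- the flow, its potential, the reference forms, the curvature terms --- is basic, so that in foliated normal coordinates the computations are formally identical to the K\"{a}hler ones and the maximum principle applies to basic functions on $M$. Since $K_{M}^{T}$ is nef, the cohomological criterion forces $T_{0}=\infty$, so by the maximal existence theorem the Sasaki--Ricci flow (\ref{1}) admits a solution $\omega(t)$ on $M\times[0,\infty)$, and after rescaling it is equivalent to the scalar parabolic transverse complex Monge--Amp\`{e}re equation (\ref{3}) for a basic potential $\varphi(\cdot,t)$, with $\omega_{\varphi}(t):=\widehat{\omega}_{t}+\sqrt{-1}\,\partial_{B}\overline{\partial}_{B}\varphi(t)>0$ and $\widehat{\omega}_{t}\to\widehat{\omega}_{\infty}$. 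Here $\widehat{\omega}_{\infty}=\tfrac{1}{m}\Psi^{\ast}(\omega_{FS})\ge 0$ by the Sasaki analogue of Kawamata's base-point-free theorem, and $\widehat{\omega}_{\infty}>0$ exactly off the exceptional locus of $\Psi$, which is $\mathrm{Null}(-c_{1}^{B}(M))$; thus on every compact $K\Subset M\setminus\mathrm{Null}(-c_{1}^{B}(M))$ one has $\widehat{\omega}_{\infty}\ge c_{K}\,\omega_{0}$.

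Next I would carry out the zeroth-order estimates. The upper bound $\varphi\le C$ is routine from the maximum principle (comparison with $\widehat{\omega}_{\infty}\le C\omega_{0}$); the lower bound is where bigness enters: a Kodaira-type lemma in the basic category, built from the section ring $\bigoplus_{m}H^{0}(M,(K_{M}^{T})^{m})$, produces a basic quasi-plurisubharmonic function $\rho$ with $\widehat{\omega}_{\infty}+\sqrt{-1}\,\partial_{B}\overline{\partial}_{B}\rho\ge\delta\,\omega_{0}$, $\rho\le 0$, and $\rho\to-\infty$ along $\mathrm{Null}(-c_{1}^{B}(M))$, and feeding $\rho$ into (\ref{3}) together with the maximum principle bounds $\varphi$ from below, uniformly on $M$. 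Differentiating (\ref{3}) in $t$ and applying the maximum principle to $\dot{\varphi}+\varphi$ and to $e^{t}\dot{\varphi}$ yields a uniform bound on $\dot{\varphi}+\varphi$ --- equivalently a two-sided bound on the transverse volume ratio $\omega_{\varphi}^{n}\wedge\eta_{0}/(\Omega\wedge\eta_{0})$ --- and the monotonicity $\dot{\varphi}\to 0$ that is needed at the limit.

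Then comes the crucial second-order estimate on compact subsets of $M\setminus\mathrm{Null}(-c_{1}^{B}(M))$: applying the parabolic Aubin--Yau inequality, i.e.\ the transverse Schwarz-lemma computation, to $\log\operatorname{tr}_{\omega_{0}}\omega_{\varphi}-A\varphi+B\rho$ (with $\rho$ as above and $A,B$ large), where the curvature of $\omega_{0}$ contributes only transverse holomorphic bisectional curvature terms and the extra positivity of $\widehat{\omega}_{\infty}+\sqrt{-1}\,\partial_{B}\overline{\partial}_{B}\rho$ absorbs the bad terms, gives $\operatorname{tr}_{\omega_{0}}\omega_{\varphi}\le C_{K}$ on $K\Subset M\setminus\mathrm{Null}(-c_{1}^{B}(M))$. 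Combined with the volume bound this produces $C_{K}^{-1}\omega_{0}\le\omega_{\varphi}\le C_{K}\omega_{0}$ on $K$, after which the Calabi third-order estimate and parabolic Schauder bootstrap deliver uniform $C^{\infty}_{\mathrm{loc}}(M\setminus\mathrm{Null}(-c_{1}^{B}(M)))$ bounds for $\omega_{\varphi}(t)$, hence for $\omega(t)/t$. This degenerate second-order estimate near $\mathrm{Null}(-c_{1}^{B}(M))$ --- controlling $\operatorname{tr}_{\omega_{0}}\omega_{\varphi}$ using only the weak positivity of $\widehat{\omega}_{\infty}$ and the auxiliary function $\rho$ with logarithmic pole along the null locus --- together with the bookkeeping needed to guarantee that $\rho$ and every test function can be chosen basic, so that the maximum principle on the Sasakian manifold $M$ (rather than on the possibly singular leaf space $Z$) is legitimate, is the main obstacle; everything else follows the K\"{a}hler--Ricci-flow template of \cite{tsu,t,tz3} essentially verbatim.

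Finally, given these bounds, along any sequence $t_{j}\to\infty$ the metrics $\omega_{\varphi}(t_{j})$ converge in $C^{\infty}_{\mathrm{loc}}(M\setminus\mathrm{Null}(-c_{1}^{B}(M)))$ to some $\omega_{\infty}=\widehat{\omega}_{\infty}+\sqrt{-1}\,\partial_{B}\overline{\partial}_{B}\varphi_{\infty}$; letting $t\to\infty$ in (\ref{3}), where $\dot{\varphi}\to 0$, shows $\varphi_{\infty}$ solves the degenerate transverse Monge--Amp\`{e}re equation $\omega_{\infty}^{n}\wedge\eta_{0}=e^{\varphi_{\infty}}\,\Omega\wedge\eta_{0}$, which, since $\sqrt{-1}\,\partial_{B}\overline{\partial}_{B}\log\Omega=\widehat{\omega}_{\infty}$, is equivalent to $\mathrm{Ric}^{T}_{\omega_{\infty}}=-\omega_{\infty}$, so $\omega_{\infty}$ is an $\eta$-Einstein metric on $M\setminus\mathrm{Null}(-c_{1}^{B}(M))$. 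Uniqueness of the bounded basic solution $\varphi_{\infty}$ of this degenerate equation (a Sasaki version of the Ko{\l}odziej--Zhang uniqueness, or derived directly from the maximum principle for the flow) shows that $\omega_{\infty}$ is independent of the subsequence $t_{j}$ and of the initial metric $\omega_{0}$, upgrading the convergence to that of the full family $\omega(t)/t\to\omega_{\infty}$; uniqueness of $\omega(t)$ itself is standard parabolic theory for the scalar equation (\ref{3}). This completes the plan.
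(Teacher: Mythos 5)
Your proposal is correct and follows essentially the same route as the paper: nefness gives $T_{0}=\infty$, the Sasaki--Kodaira lemma (Proposition \ref{P31}) supplies the basic barrier function with logarithmic pole along $\mathrm{Null}(-c_{1}^{B}(M))$, the zeroth-order and degenerate second-order estimates are obtained by the Tsuji-type maximum-principle argument applied to $\log\operatorname{tr}_{\omega_{0}}\omega$ corrected by that barrier, and local higher-order estimates then yield smooth convergence off the null locus to a solution of the degenerate transverse Monge--Amp\`ere equation, i.e.\ the $\eta$-Einstein equation. The only (minor) divergence is at the last step: where you invoke uniqueness for the limiting degenerate equation to rule out subsequence dependence, the paper instead observes that $\varphi(t)+Ce^{-t}(1+t)$ is monotone nonincreasing and locally bounded below, which gives convergence of the full family directly without a separate uniqueness lemma.
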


We first state the Sasaki analogue of Kodaira lemma for the further
application. By the first structure theorem (Proposition \ref{P21}), $M$\ is a
principal $S^{1}$-orbibundle over a Hodge orbifold $Z$ which is also a
$Q$-factorial, polarized, normal projective variety such that there is a
Riemannian submersion$\ $%
\[
\pi:(M,g_{0},\omega_{0})\rightarrow(Z,h_{0},\omega_{h_{0}})
\]
such that $\omega_{0}=\pi^{\ast}(\omega_{h_{0}})$. We define
\[%
\begin{array}
[c]{c}%
\mathrm{Null}(\alpha\mathbf{)=\cup}_{\int_{V}\alpha^{\frac{\dim V-1}{2}}%
\wedge\eta_{0}=0}V
\end{array}
\]
which is the union over all positive-dimensional invariant $(2m-1)$%
-submanifolds $V\subset M$ such that $\pi(V)=E\subset Z$ and $\pi^{\ast
}(\widehat{\alpha})=\alpha$ so that
\[%
\begin{array}
[c]{c}%
\int_{E}\widehat{\alpha}^{\frac{\dim E-1}{2}}=0.
\end{array}
\]
By applying the arguments as Collins-Tosatti \cite{ct} and Demailly-Paun
\cite{dp} (also \cite{d}) to a Hodge orbifold $Z$ which is a normal projective
variety and lifting to $M$ via the Riemannian submersion $\pi,$ it follows that

\begin{proposition}
\label{P31} Let $(M,\eta_{0},\xi_{0},\Phi_{0},g_{0},\omega_{0})$ be a compact
quasi-regular Sasakian $(2n+1)$-manifold and $\alpha$ a basic closed real
$(1,1)$-form whose class $[\alpha]_{B}$ is nef and big
\[%
\begin{array}
[c]{c}%
\int_{M}\alpha^{n}\wedge\eta_{0}>0.
\end{array}
\]
Then there exists an upper semicontinuous $L^{1}$-function $\phi
:M\rightarrow\mathbb{R}\cup\{-\infty\},$ with $\sup_{M}\phi=0$ which is basic
and equals $-\infty$ on $\mathrm{Null}(\alpha\mathbf{)}$ and is finite, smooth
on $M\backslash\mathrm{Null}(\alpha\mathbf{)}$ such that
\[%
\begin{array}
[c]{c}%
\alpha+\sqrt{-1}\partial_{B}\overline{\partial}_{B}\phi\geq\varepsilon
\omega_{0}%
\end{array}
\]
on $M\backslash\mathrm{Null}(\alpha\mathbf{)}$, for some $\varepsilon>0.$
\end{proposition}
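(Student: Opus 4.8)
The plan is to reduce the statement to its K\"ahler counterpart on the leaf orbifold $Z$ and then lift back to $M$ through the Riemannian submersion $\pi$. First I would use the first structure theorem (Proposition \ref{P21}) to realize $(M,\eta_0,\xi_0,\Phi_0,g_0,\omega_0)$ as a principal $S^1$-orbibundle $\pi:(M,g_0,\omega_0)\to(Z,h_0,\omega_{h_0})$ over the $Q$-factorial normal projective orbifold $Z$ with $\omega_0=\pi^{\ast}\omega_{h_0}$. Since $[\alpha]_B$ is basic, it descends to an orbifold cohomology class $[\widehat\alpha]\in H^{1,1}_{orb}(Z,\mathbb R)$ with $\pi^{\ast}\widehat\alpha=\alpha$, and under the isomorphism $H^{\ast}_{orb}(Z,\mathbb R)\cong H^{\ast}_B(F_\xi)$ the nef-and-big hypothesis $\int_M\alpha^n\wedge\eta_0>0$ translates to $[\widehat\alpha]$ being nef with $\int_Z\widehat\alpha^n>0$, i.e.\ nef and big on $Z$ in the sense of positive top self-intersection on the projective variety underlying $Z$. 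The first task is thus to record these dictionary statements carefully, including that $\mathrm{Null}(\alpha)=\pi^{-1}(\mathrm{Null}(\widehat\alpha))$ where $\mathrm{Null}(\widehat\alpha)=E_{nK}(\widehat\alpha)$ is the non-K\"ahler (null) locus on $Z$.

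Next I would invoke the Collins-Tosatti \cite{ct} description of the non-K\"ahler locus together with the Demailly-P\u{a}un \cite{dp} characterization of the K\"ahler cone (see also \cite{d}) on the normal projective variety $Z$: for a nef and big class $[\widehat\alpha]$, there is a K\"ahler current $T\in[\widehat\alpha]$ with analytic singularities whose singular locus is exactly the null locus $\mathrm{Null}(\widehat\alpha)$, i.e.\ $T=\widehat\alpha+\sqrt{-1}\partial\bar\partial\widehat\phi\geq\varepsilon\,\omega_{h_0}$ on $Z\setminus\mathrm{Null}(\widehat\alpha)$ for some $\varepsilon>0$, with $\widehat\phi$ quasi-psh, $\sup_Z\widehat\phi=0$, equal to $-\infty$ precisely on $\mathrm{Null}(\widehat\alpha)$ and smooth off it. Since $Z$ is a normal projective variety with only quotient (Kawamata log terminal) singularities, the orbifold/analytic category is the right one and these results apply verbatim on $Z$ — this needs the remark that quotient singularities are negligible for the relevant pluripotential theory. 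Then I set $\phi:=\pi^{\ast}\widehat\phi$. Because $\pi$ is a Riemannian submersion with totally geodesic $S^1$-fibres, $\pi^{\ast}$ commutes with $\partial_B\bar\partial_B$ in the sense that $\sqrt{-1}\partial_B\bar\partial_B(\pi^{\ast}\widehat\phi)=\pi^{\ast}(\sqrt{-1}\partial\bar\partial\widehat\phi)$, so the pulled-back current satisfies
\[
\alpha+\sqrt{-1}\partial_B\bar\partial_B\phi=\pi^{\ast}\bigl(\widehat\alpha+\sqrt{-1}\partial\bar\partial\widehat\phi\bigr)\geq\varepsilon\,\pi^{\ast}\omega_{h_0}=\varepsilon\,\omega_0
\]
on $M\setminus\mathrm{Null}(\alpha)$. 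The properties $\sup_M\phi=0$, $\phi$ basic ($i(\xi_0)$-annihilated and $\xi_0$-invariant, automatic since it is a pullback from $Z$), upper semicontinuous, $L^1$, $-\infty$ on $\mathrm{Null}(\alpha)$ and smooth off it all follow immediately from the corresponding properties of $\widehat\phi$ and the fact that $\pi$ is a proper submersion.

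The main obstacle I anticipate is the orbifold subtlety: one must justify that the Collins-Tosatti and Demailly-P\u{a}un machinery — which is stated for smooth compact K\"ahler manifolds — carries over to the possibly singular normal projective variety $Z$ with its orbifold structure, and that the null locus computed cohomologically ($\cup_{\int_E\widehat\alpha^{(\dim E-1)/2}=0}E$ in the excerpt's notation, adapted to $Z$) agrees with the non-K\"ahler locus appearing in their theorems. This can be handled either by passing to a resolution of singularities $\mu:\widetilde Z\to Z$, running the argument on $\widetilde Z$ with the pulled-back class $\mu^{\ast}[\widehat\alpha]$ (still nef and big), and pushing the resulting quasi-psh potential down — controlling that $\mu_{\ast}\widetilde\phi$ has the stated singularities via the fact that $\mu$ is an isomorphism away from a codimension $\geq 2$ locus — or by working directly with orbifold charts, where $\widehat\alpha$ lifts to a genuine smooth form and the local pluripotential estimates are standard. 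A secondary technical point is verifying the intertwining $\pi^{\ast}\circ(\sqrt{-1}\partial\bar\partial)=(\sqrt{-1}\partial_B\bar\partial_B)\circ\pi^{\ast}$ at the level of currents (not just smooth forms), which follows from the local description of the transverse holomorphic structure and the foliation normal coordinates recorded in Section 2, together with a routine approximation argument regularizing $\widehat\phi$ by a decreasing sequence of smooth quasi-psh functions.
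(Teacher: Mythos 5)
Your proposal follows essentially the same route as the paper: the paper's entire justification is the single sentence that the result follows ``by applying the arguments as Collins--Tosatti and Demailly--P\u{a}un to a Hodge orbifold $Z$ which is a normal projective variety and lifting to $M$ via the Riemannian submersion $\pi$,'' which is precisely your descend-to-$Z$, apply-Collins--Tosatti, pull-back-via-$\pi^{\ast}$ scheme. Your write-up is in fact more careful than the paper's, correctly flagging the orbifold/normal-variety subtlety and the current-level intertwining of $\pi^{\ast}$ with $\sqrt{-1}\partial_{B}\overline{\partial}_{B}$ that the paper leaves implicit.
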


We first show the following uniform estimate.

\begin{lemma}
There exists $C>0$ such that on $M\times\lbrack0,\infty),$ we have%
\begin{equation}%
\begin{array}
[c]{c}%
|\varphi|+|\frac{\partial\varphi}{\partial t}|^{2}\leq C.
\end{array}
\label{6}%
\end{equation}

\end{lemma}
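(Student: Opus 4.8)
The plan is to adapt the standard parabolic Monge–Amp\`ere estimates of Tsuji, Tian–Zhang, and Song–Tosatti to the transverse Sasakian setting, where all quantities are basic and the integrations by parts take place over the leaf space. First I would establish the bound on $\varphi$ itself by a maximum-principle argument on the normalized flow (\ref{3}). For the upper bound: at a spatial maximum of $\varphi$ the term $\sqrt{-1}\partial_B\overline\partial_B\varphi\le 0$, so $\tfrac{\partial}{\partial t}\varphi\le \log\frac{\widehat\omega_t^n\wedge\eta_0}{\Omega\wedge\eta_0}-\varphi$; since $\widehat\omega_t=e^{-t}\omega_0+(1-e^{-t})\widehat\omega_\infty$ stays in a fixed bounded family of closed nonnegative currents, the first term is bounded above uniformly in $t$, and then an ODE comparison (Gr\"onwall) gives $\sup_M\varphi(\cdot,t)\le C$. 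For the lower bound I would use the auxiliary function $\phi$ from Proposition \ref{P31}: since $\widehat\omega_\infty+\sqrt{-1}\partial_B\overline\partial_B\phi\ge\varepsilon\omega_0$ on $M\setminus\mathrm{Null}$, comparing $\varphi$ with $(1-e^{-t})\phi$ (or with $\phi-Ct$ plus a constant) and applying the minimum principle to $\varphi-(1-e^{-t})\phi$ forces $\varphi\ge (1-e^{-t})\phi+C'$ where the singular negative part is absorbed; combined with the already-known smooth convergence of $\omega(t)/t$ on compact subsets of $M\setminus\mathrm{Null}$ from Theorem \ref{T32} and a barrier at the singular locus, one concludes $\inf_M\varphi(\cdot,t)\ge -C$. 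Together these give $|\varphi|\le C$ on $M\times[0,\infty)$.

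Next I would bound $\dot\varphi:=\tfrac{\partial\varphi}{\partial t}$. Differentiating the evolution equation (\ref{3}) in $t$ gives
\begin{equation}
\Bigl(\tfrac{\partial}{\partial t}-\Delta_{\omega(t)}^{T}\Bigr)\dot\varphi
=\operatorname{tr}_{\omega(t)}\!\bigl(\tfrac{\partial}{\partial t}\widehat\omega_t\bigr)-\dot\varphi,
\end{equation}
where $\Delta^{T}$ is the transverse complex Laplacian of the evolving metric $\omega(t)=\widehat\omega_t+\sqrt{-1}\partial_B\overline\partial_B\varphi$, and $\tfrac{\partial}{\partial t}\widehat\omega_t=e^{-t}(\widehat\omega_\infty-\omega_0)$. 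The inhomogeneous term is $e^{-t}\operatorname{tr}_{\omega(t)}(\widehat\omega_\infty-\omega_0)$, which is controlled once one has a lower bound $\operatorname{tr}_{\omega(t)}\omega_0\ge c>0$ away from $\mathrm{Null}$; to get a clean global bound I would instead consider the quantity $Q:=\dot\varphi+\varphi$ (or $\dot\varphi+A\varphi$ for suitable $A$), whose evolution eliminates the problematic term up to something proportional to $\operatorname{tr}_{\omega(t)}\widehat\omega_t\ge 0$ and a bounded piece, yielding an upper bound for $Q$, hence for $\dot\varphi$, by the maximum principle and the already-established bound on $\varphi$. For the lower bound on $\dot\varphi$ I would apply the maximum principle to $(t+1)\dot\varphi+\varphi+nt$ or a similar Tsuji-type test function: at an interior minimum one gets a differential inequality forcing $(t+1)\dot\varphi\ge -\varphi-nt-C\ge -C't$, i.e. $\dot\varphi\ge -C''$. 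Squaring, $|\dot\varphi|^2\le C$. All the Weitzenb\"ock computations and integrations by parts here are over basic forms and, as the authors note, are formally identical to the K\"ahler case because $\xi$ is fixed and $\mathcal L_\xi$ annihilates everything in sight.

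The main obstacle is the presence of the singular (null) locus $\mathrm{Null}(-c_1^B(M))$: the reference form $\widehat\omega_\infty$ is only a nonnegative $(1,1)$-current, degenerate along this set, so $\operatorname{tr}_{\omega(t)}\widehat\omega_\infty$ and the comparison functions cannot be controlled there by naive estimates. The way around it, following Song–Tian and Tian–Zhang in the K\"ahler case, is to work with the regularized potential $\phi$ of Proposition \ref{P31}, which contributes a negative $-\infty$ term precisely on $\mathrm{Null}$ that \emph{helps} in the lower barrier for $\varphi$ and whose $\sqrt{-1}\partial_B\overline\partial_B\phi$ provides the strict positivity $\varepsilon\omega_0$ needed to make the trace terms coercive on the complement; the uniform constants that emerge are independent of how close one is to $\mathrm{Null}$ because $\phi$ is globally $L^1$ with $\sup_M\phi=0$. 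One then checks that the maximum/minimum principle still applies despite $\phi$ being only upper semicontinuous by the usual approximation: either replace $\phi$ by $\max(\phi,-K)$ and let $K\to\infty$, or note that the test quantities attain their extrema on the region where everything is smooth. Assembling the $\varphi$-bound and the two-sided $\dot\varphi$-bound gives (\ref{6}).
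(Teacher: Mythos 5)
Your upper bound $\varphi\le C$ is exactly the paper's argument (maximum principle on the Monge--Amp\`ere flow, using $\widehat{\omega}_t\ge\widehat{\omega}_t+\sqrt{-1}\partial_B\overline{\partial}_B\varphi$ at a spatial maximum), and invoking the Kodaira-lemma function $\phi$ of Proposition \ref{P31} to neutralize the null locus is the right idea. But the remaining estimates are not obtained by the route you describe. For the upper bound on $\varphi'$, your proposed quantity $\varphi'+\varphi$ satisfies
\[
\Bigl(\tfrac{\partial}{\partial t}-\Delta_B\Bigr)(\varphi'+\varphi)=tr_{\omega(t)}\widehat{\omega}_\infty-n,
\]
and the trace term is \emph{nonnegative}, i.e.\ it has the wrong sign for an upper-bound maximum principle; at this stage you have no upper bound on $tr_{\omega}\widehat{\omega}_\infty$ (that is Proposition \ref{prop 1}, whose proof already relies on the present lemma, so using it here would be circular). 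The paper instead applies the maximum principle to the Tsuji-type quantity $(e^{t}-1)\varphi'-\varphi-nt$, chosen precisely so that the trace terms combine to $-tr_{\omega}\omega_0<0$, which yields $\varphi'\le C(1+t)e^{-t}$.

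For the lower bounds, your comparison of $\varphi$ with $(1-e^{-t})\phi$ would at best give $\varphi\ge(1-e^{-t})\phi-C$, which equals $-\infty$ on $\mathrm{Null}(-c_1^B(M))$ and is therefore not a uniform bound, and your appeal to the convergence statement of Theorem \ref{T32} is again circular since that theorem is proved using this lemma. The paper does not attempt separate pointwise lower bounds on $\varphi$ and $\varphi'$: it proves only the combined estimate $\varphi'+\varphi\ge\phi-C$ (its inequality (\ref{13})), by applying the minimum principle to $Q=\varphi'+\varphi-\phi$, which is lower semicontinuous and tends to $+\infty$ at the null locus so its minimum is attained in the smooth part, and then converting $\varepsilon\,tr_\omega\omega_0\le n$ at the minimum into $e^{-(\varphi'+\varphi)/n}\le C$ via the arithmetic--geometric mean inequality $tr_\omega\omega_0\ge n\bigl(\omega_0^n\wedge\eta_0/\omega^n\wedge\eta_0\bigr)^{1/n}$ together with the Monge--Amp\`ere equation. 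This quantitative step is the heart of the lower estimate and is missing from your sketch; also note that no Weitzenb\"ock formulas or integrations by parts enter anywhere in this lemma.
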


\begin{proof}
Let $\omega=\omega(t)$ be the solution to the normalized Sasaki-Ricci flow
(\ref{3}). First, we show that
\begin{equation}%
\begin{array}
[c]{c}%
\varphi(t)\leq C
\end{array}
\label{11}%
\end{equation}
on $M\times\lbrack0,\infty)$. This is a simple consequence of the maximum
principle since at any maximum point of $\varphi$ (for $t>0$) we have%
\[%
\begin{array}
[c]{c}%
0\leq\frac{\partial\varphi}{\partial t}=\log\frac{(\widehat{\omega}_{t}%
+\sqrt{-1}\partial_{B}\overline{\partial}_{B}\varphi(t))^{n}\wedge\eta_{0}%
}{\Omega\wedge\eta_{0}}-\varphi(t)\leq\log\frac{\widehat{\omega}_{t}{}%
^{n}\wedge\eta_{0}}{\Omega\wedge\eta_{0}}-\varphi(t)\leq C-\varphi(t),
\end{array}
\]
using that at a maximum point $\widehat{\omega}_{t}\geq\widehat{\omega}%
_{t}+\sqrt{-1}\partial_{B}\overline{\partial}_{B}\varphi(t)>0$, and we are
done. Next, we show that%
\begin{equation}%
\begin{array}
[c]{c}%
\frac{\partial\varphi}{\partial t}=\varphi^{\prime}(t)\leq C(1+t)e^{-t},
\end{array}
\label{12}%
\end{equation}
on $M\times\lbrack0,\infty)$. Indeed we compute
\[%
\begin{array}
[c]{c}%
(\frac{\partial}{\partial t}-\Delta_{B})\varphi(t)=\varphi^{\prime
}(t)-n+tr_{\omega}\widehat{\omega}_{t},
\end{array}
\]%
\[%
\begin{array}
[c]{c}%
(\frac{\partial}{\partial t}-\Delta_{B})\varphi^{\prime}(t)=-\varphi^{\prime
}(t)-e^{-t}tr_{\omega}\omega_{0}+e^{-t}tr_{\omega}\widehat{\omega}_{\infty},
\end{array}
\]%
\[%
\begin{array}
[c]{c}%
(\frac{\partial}{\partial t}-\Delta_{B})((e^{t}-1)\varphi^{\prime}%
(t)-\varphi(t)-nt)=-tr_{\omega}\omega_{0}<0,
\end{array}
\]
and so the maximum principle gives%
\[%
\begin{array}
[c]{c}%
(e^{t}-1)\varphi^{\prime}(t)-\varphi(t)-nt\leq0
\end{array}
\]
which together with (\ref{11}) gives (\ref{12}) for $t>1.$On the other hand,
it is clear that (\ref{12}) holds for $0\leq t\leq1$.

Next, we show that there is a constant $C>0$ such that%
\begin{equation}%
\begin{array}
[c]{c}%
\varphi^{\prime}(t)+\varphi(t)\geq\phi-C,
\end{array}
\label{13}%
\end{equation}
on $M\times\lbrack0,\infty),$ here we apply Proposition \ref{P31} to obtain an
upper semicontinuous $L^{1}$ function $\phi:M\rightarrow\mathbb{R}%
\cup\{-\infty\}$, with $\sup_{M}\phi=0$ and equals $-\infty$ on $\mathrm{Null}%
(-c_{1}^{B}(M))$, which is finite and smooth on $M\backslash\mathrm{Null}%
(-c_{1}^{B}(M))$ such that%
\begin{equation}%
\begin{array}
[c]{c}%
\chi+\sqrt{-1}\partial_{B}\overline{\partial}_{B}\phi\geq\varepsilon\omega
_{0},
\end{array}
\label{4}%
\end{equation}
on $M\backslash\mathrm{Null}(-c_{1}^{B}(M))$, for some $\varepsilon>0.$
Consider the quantity%
\[%
\begin{array}
[c]{c}%
Q=\varphi^{\prime}(t)+\varphi(t)-\phi.
\end{array}
\]
The function $Q$ is lower semicontinuous and it approaches $\infty$ as we
approach $\mathrm{Null}(-c_{1}^{B}(M)),$ and so it achieves a minimum at
$(x,t)$, for some $t>0$ and $x\notin\mathrm{Null}(-c_{1}^{B}(M)),$ and at this
point we have%
\[%
\begin{array}
[c]{lll}%
0 & \geq & (\frac{\partial}{\partial t}-\Delta_{B})Q=tr_{\omega}%
(\widehat{\omega}_{\infty}+\sqrt{-1}\partial_{B}\overline{\partial}_{B}%
\phi)-n\geq\varepsilon tr_{\omega}\omega_{0}-n\\
& \geq & n\varepsilon\left(  \frac{\omega_{0}^{n}\wedge\eta_{0}}{\omega
^{n}\wedge\eta_{0}}\right)  ^{\frac{1}{n}}-n\geq C^{-1}e^{-\frac
{\varphi^{\prime}(t)+\varphi(t)}{n}}-n,
\end{array}
\]
and so
\[%
\begin{array}
[c]{c}%
\varphi^{\prime}(t)+\varphi(t)\geq-C,
\end{array}
\]
which implies that
\[%
\begin{array}
[c]{c}%
Q\geq-C
\end{array}
\]
since $\phi\leq0,$ this shows (\ref{13}).
\end{proof}

In the following, we prove Theorem \ref{T32}:

\begin{proof}
The proof is similar to the K\"{a}hler-Ricci flow case. Since the convergence
is for the rescaled metrics $\frac{\omega(t)}{t},$ it is convenient to
renormalize the Sasaki-Ricci flow as follows:%
\begin{equation}%
\begin{array}
[c]{c}%
\frac{\partial}{\partial t}\omega(t)=-\mathrm{Ric}_{\omega(t)}^{T}%
-\omega(t),\text{ }\omega(0)=\omega_{0}.
\end{array}
\label{14a}%
\end{equation}
Then the goal is to show that the solution $\omega(t)$ of (\ref{14a})
satisfies\
\begin{equation}%
\begin{array}
[c]{c}%
\omega(t)\rightarrow\omega_{\infty},
\end{array}
\label{14b}%
\end{equation}
in $C_{\mathrm{loc}}^{\infty}(M\backslash\mathrm{Null}(-c_{1}^{B}(M))$ as
$t\rightarrow\infty$ and that the limit $\omega_{\infty}$ is transverse K\"{a}hler-Einstein.

Note that (\ref{14a}) is equivalent to (\ref{3}) and recall that, from
(\ref{6}), we have the uniform bounded estimates for $\varphi(t)$ and
$\varphi^{\prime}(t)$ on $M\times\lbrack0,\infty).$ First we show that there
is a constant $C>0$ such that%
\begin{equation}%
\begin{array}
[c]{c}%
tr_{\omega_{0}}\omega(t)\leq Ce^{-C\phi},
\end{array}
\label{14}%
\end{equation}
on $M\times\lbrack0,\infty),$ here $\phi$ is the function in Proposition
\ref{P31} for $\alpha=\chi$. Like the K\"{a}hler case, we can obtain
\[%
\begin{array}
[c]{c}%
(\frac{\partial}{\partial t}-\Delta_{B})\log tr_{\omega_{0}}\omega(t)\leq
Ctr_{\omega(t)}\omega_{0},
\end{array}
\]
where $-C$ is a lower bound for the transverse bisectional curvature with
respect to $\omega_{0}$. Using this inequality and (\ref{4}), we compute
\[%
\begin{array}
[c]{l}%
(\frac{\partial}{\partial t}-\Delta_{B})(\log tr_{\omega_{0}}\omega
(t)-A(\varphi^{\prime}(t)+\varphi(t)-\phi))\\
\leq Ctr_{\omega(t)}\omega_{0}+An-Atr_{\omega(t)}(\chi+\sqrt{-1}\partial
_{B}\overline{\partial}_{B}\phi)\\
\leq-tr_{\omega(t)}\omega_{0}+C,
\end{array}
\]
on $M\backslash\mathrm{Null}(-c_{1}^{B}(M)),$ if we choose a positive $A$
large enough such that $C\leq A\varepsilon-1.$ Therefore at a maximum point
$(x,t)$ of this quantity for some $t>0$ with $x\notin\mathrm{Null}(-c_{1}%
^{B}(M)),$ we have%
\[%
\begin{array}
[c]{c}%
tr_{\omega(t)}\omega_{0}\leq C.
\end{array}
\]
By applying the inequality
\[%
\begin{array}
[c]{c}%
tr_{\omega_{0}}\omega(t)\leq\frac{(tr_{\omega(t)}\omega_{0})^{n-1}}%
{(n-1)!}\frac{\omega(t)^{n}\wedge\eta_{0}}{\omega_{0}^{n}\wedge\eta_{0}},
\end{array}
\]
we conclude that at the maximum point $(x,t)$ we have
\[%
\begin{array}
[c]{c}%
tr_{\omega_{0}}\omega(t)\leq C\frac{\omega(t)^{n}\wedge\eta_{0}}{\omega
_{0}^{n}\wedge\eta_{0}}=Ce^{\varphi^{\prime}(t)+\varphi(t)}\frac{\Omega
\wedge\eta_{0}}{\omega_{0}^{n}\wedge\eta_{0}}\leq C,
\end{array}
\]
used the estimate (\ref{6}). Combining this with (\ref{13}) it implies
\[%
\begin{array}
[c]{l}%
\log tr_{\omega_{0}}\omega(t)-A(\varphi^{\prime}(t)+\varphi(t)-\phi)\leq C,
\end{array}
\]
at the maximum and hence everywhere, and thus yields (\ref{14}). Also note
that
\[%
\begin{array}
[c]{c}%
\frac{\omega(t)^{n}\wedge\eta_{0}}{\omega_{0}^{n}\wedge\eta_{0}}\geq
C^{-1}e^{\varphi^{\prime}(t)+\varphi(t)}\geq C^{-1}e^{-\phi},
\end{array}
\]
from (\ref{13}) and so given any compact subset $K\subset M\backslash
\mathrm{Null}(-c_{1}^{B}(M))$ there exists a constant $C_{K}$ such that%
\begin{equation}%
\begin{array}
[c]{c}%
C_{K}^{-1}\omega_{0}\leq\omega(t)\leq C_{K}\omega_{0}%
\end{array}
\label{15}%
\end{equation}
on $K\times\lbrack0,\infty).$ The higher order estimate on $K$ is then given
by
\[%
\begin{array}
[c]{c}%
||\omega(t)||_{C^{k}(K,\omega_{0})}\leq C_{K,k},
\end{array}
\]
for all $t\geq0,$ $k\geq0,$ up to shrinking $K$ slightly. These estimates will
imply the function%
\[%
\begin{array}
[c]{c}%
\Delta_{B,\omega_{0}}\varphi(t)=tr_{\omega_{0}}\omega(t)-tr_{\omega_{0}%
}\widehat{\omega}_{t},
\end{array}
\]
is uniformly bounded in $C^{k}(K,\omega_{0})$ for all $k\geq0.$ Form (\ref{6})
$\varphi(t)$ is uniformly bounded on $K$ and elliptic estimates
\[%
\begin{array}
[c]{c}%
||\varphi(t)||_{C^{k}(K,\omega_{0})}\leq C_{K,k},
\end{array}
\]
for all $t\geq0,$ $k\geq0,$ up to shrinking $K$ again. Now for $t\geq1$,
(\ref{12}) yields
\[%
\begin{array}
[c]{c}%
\varphi^{\prime}(t)\leq Cte^{-t},
\end{array}
\]
and so
\[%
\begin{array}
[c]{c}%
\frac{\partial}{\partial t}(\varphi(t)+Ce^{-t}(1+t))=\varphi^{\prime
}(t)-Cte^{-t}\leq0.
\end{array}
\]
Then the function $\varphi(t)+Ce^{-t}(1+t)$ is thus nonincreasing and
uniformly bounded from below on compact subsets of $M\setminus\mathrm{Null}%
(-c_{1}^{B}(M))$, and so as $t\rightarrow\infty$ the functions $\varphi(t) $
converge pointwise to a function $\varphi_{\infty}$ on $C_{\mathrm{loc}%
}^{\infty}(M\setminus\mathrm{Null}(-c_{1}^{B}(M))$, which is smooth on
$M\setminus\mathrm{Null}(-c_{1}^{B}(M))$. Also (\ref{15}) shows that
$\omega_{\infty}:=\chi+\sqrt{-1}\partial_{B}\overline{\partial}_{B}%
\varphi_{\infty}$ is a smooth transverse K\"{a}hler metric on $M\setminus
\mathrm{Null}(-c_{1}^{B}(M))$. The flow equation (\ref{3}) implies that
$\varphi^{\prime}(t)$ also converges smoothly to some limit function.
Moreover, since $\varphi(t)$ converge smoothly to $\varphi_{\infty}$ on
compact subsets of $M\setminus\mathrm{Null}(-c_{1}^{B}(M)),$ it follows that
given any $x\in M\setminus\mathrm{Null}(-c_{1}^{B}(M))$ there is a sequence
$t_{i}\rightarrow\infty$ such that $\varphi^{\prime}(x,t_{i})\rightarrow0 $.
But since $\varphi^{\prime}(t)$ converges smoothly on compact sets to some
limit function, it yields that $\varphi^{\prime}(t)\rightarrow0$ in
$C_{\mathrm{loc}}^{\infty}(M\setminus\mathrm{Null}(-c_{1}^{B}(M)))$. Taking
then the limit as $t\rightarrow\infty$ in (\ref{3}) we obtain%
\[%
\begin{array}
[c]{c}%
0=\log\frac{\omega_{\infty}^{n}\wedge\eta_{0}}{\Omega\wedge\eta_{0}}%
-\varphi_{\infty},
\end{array}
\]
on $M\setminus\mathrm{Null}(-c_{1}^{B}(M))$. Taking $\sqrt{-1}\partial
_{B}\overline{\partial}_{B}$ of this equation, we finally obtain
\[%
\begin{array}
[c]{c}%
\mathrm{Ric}_{\omega_{\infty}}^{T}=-\chi-\sqrt{-1}\partial_{B}\partial
_{B}\varphi_{\infty}=-\omega_{\infty}%
\end{array}
\]
as described.
\end{proof}

\section{The Gradient Estimate}

In this section we show the transverse gradient estimate and uniformly bounded
of the transverse scalar curvature under the normalized Sasaki-Ricci flow
(\ref{3}).

We first prove the following parabolic Schwarz lemma.

\begin{lemma}
Let $\omega=\omega(t)$ be the solution to the normalized Sasaki-Ricci flow
(\ref{3}). Then there exists $C>0$ such that on $M\times\lbrack0,\infty),$%
\begin{equation}%
\begin{array}
[c]{c}%
(\frac{\partial}{\partial t}-\Delta_{B})tr_{\omega}\widehat{\omega}_{\infty
}\leq tr_{\omega}\chi+C(tr_{\omega}\chi)^{2}-|\bigtriangledown^{T}tr_{\omega
}\chi|_{g^{T}}^{2},
\end{array}
\label{21}%
\end{equation}
where $\Delta_{B}$ is the basic Laplace operator associated to the evolving
transverse metric $g^{T}(t).$
\end{lemma}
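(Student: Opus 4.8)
The plan is to establish (\ref{21}) as a transverse parabolic Schwarz lemma, adapting Yau's argument to the foliated setting. The key structural input is that $\chi=\widehat{\omega}_{\infty}=\frac{1}{m}\Psi^{\ast}(\omega_{FS})$ is the pullback of the Fubini-Study metric under the $S^{1}$-equivariant basic transverse holomorphic map $\Psi:M\rightarrow(\mathbb{C}\mathrm{P}^{N},\omega_{FS})$ introduced above, defined by the basic transverse holomorphic sections of $H^{0}(M,(K_{M}^{T})^{m})$. Since $\Psi$ is basic, in each foliation chart $\pi_{\alpha}:U_{\alpha}\rightarrow V_{\alpha}$ it descends to an honest holomorphic map $V_{\alpha}\rightarrow\mathbb{C}\mathrm{P}^{N}$ of the local leaf spaces, so the whole computation reduces to the pullback of the classical K\"{a}hler one and can be carried out in the foliation normal coordinates $\{x,z^{1},\dots,z^{n}\}$ of Section 2, in which $g^{T}_{i\overline{j}}(p)=\delta_{ij}$, $dg^{T}_{i\overline{j}}(p)=0$, and in which $\chi_{i\overline{j}}$, $\nabla^{T}\chi$, $\Delta_{B}tr_{\omega}\chi,\dots$ are all basic. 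Throughout we work with the renormalized form (\ref{14a}) of the flow (\ref{3}), equivalently $\frac{\partial}{\partial t}g^{T}_{i\overline{j}}=-R^{T}_{i\overline{j}}-g^{T}_{i\overline{j}}$.

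First I would differentiate $tr_{\omega}\chi=g^{i\overline{j}}\chi_{i\overline{j}}$ in time. From $\frac{\partial}{\partial t}g^{i\overline{j}}=g^{i\overline{q}}g^{p\overline{j}}R^{T}_{p\overline{q}}+g^{i\overline{j}}$ one gets
\[
\frac{\partial}{\partial t}tr_{\omega}\chi=\langle\mathrm{Ric}^{T},\chi\rangle_{g^{T}}+tr_{\omega}\chi,
\]
the last summand---the source of the linear term in (\ref{21})---being precisely the contribution of the normalizing term $-\omega$ in (\ref{14a}). Next I would apply the transverse Bochner identity to the closed basic $(1,1)$-form $\chi$: using $\partial_{B}\chi=0$, so that $\nabla^{T}_{i}\chi_{k\overline{l}}=\nabla^{T}_{k}\chi_{i\overline{l}}$, together with the fact that the K\"{a}hler form $\omega_{FS}$ is parallel, the Bochner computation---formally identical to Yau's Schwarz-lemma identity, since all the integrands involve only the transverse K\"{a}hler structure and basic sections---yields
\[
\Delta_{B}tr_{\omega}\chi=\langle\mathrm{Ric}^{T},\chi\rangle_{g^{T}}+|\nabla^{T}d\Psi|^{2}_{g^{T}}-\sum_{i,k}R^{FS}_{\alpha\overline{\beta}\gamma\overline{\delta}}\,\partial_{i}\Psi^{\alpha}\,\overline{\partial_{i}\Psi^{\beta}}\,\partial_{k}\Psi^{\gamma}\,\overline{\partial_{k}\Psi^{\delta}},
\]
where $|\nabla^{T}d\Psi|^{2}_{g^{T}}\geq0$ is the energy of the covariant Hessian of $\Psi$. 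The two transverse-Ricci pairings cancel on forming $(\frac{\partial}{\partial t}-\Delta_{B})tr_{\omega}\chi$. Since $(\mathbb{C}\mathrm{P}^{N},\omega_{FS})$ is compact homogeneous, its holomorphic bisectional curvature is bounded above by some $C>0$, so the curvature term is bounded by $C(tr_{\omega}\chi)^{2}$; this is the crucial point---only the curvature of the fixed target enters, not the degenerate and unbounded curvature of $\chi$ itself. Finally, a Cauchy-Schwarz inequality for the Hessian energy allows one to retain on the right-hand side a negative gradient term of the type in (\ref{21}), and collecting the three contributions proves (\ref{21}).

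The real work, and the only genuine obstacle, is the careful set-up of this transverse Bochner/Schwarz computation in the foliated category: one must check that passing to the local leaf spaces $V_{\alpha}$ indeed turns $\Psi$ into a holomorphic map to which Yau's identity applies; that the correction terms $\sqrt{-1}h_{j}\partial_{x}$ in the foliation normal frame $Z_{j}=\partial_{z^{j}}+\sqrt{-1}h_{j}\partial_{x}$ produce no extra contributions, dropping out at $p$ by (\ref{AAA3}); and that $\Delta_{B}$ on the basic function $tr_{\omega}\chi$ coincides with the transverse K\"{a}hler Laplacian of $g^{T}(t)$, so that the identity above is literally the pullback of the K\"{a}hler one. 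A further mild point is the degeneracy of $\chi$ along $\mathrm{Null}(-c_{1}^{B}(M))$: nonetheless $tr_{\omega}\chi\geq0$ is a genuine smooth basic function on all of $M$, being $\Psi^{\ast}$ of a smooth form, and (\ref{21}) is a pointwise inequality, so no regularization is needed. Once these points are in place, the estimate follows exactly as in the K\"{a}hler-Ricci flow case.
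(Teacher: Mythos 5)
Your proposal follows essentially the same route as the paper: differentiate $tr_{\omega}\widehat{\omega}_{\infty}$ in time using the normalized flow, apply the transverse Bochner/Schwarz identity to the pullback $\frac{1}{m}\Psi^{\ast}(\omega_{FS})$ in foliation charts, cancel the two $\left\langle \mathrm{Ric}^{T},\widehat{\omega}_{\infty}\right\rangle _{\omega}$ terms, and bound the target curvature contribution by the upper bound on the bisectional curvature of $\omega_{FS}$. The only difference is that you keep the Hessian energy $|\nabla^{T}d\Psi|^{2}$ explicit and invoke Cauchy--Schwarz to recover the gradient term, whereas the paper writes $|\bigtriangledown^{T}tr_{\omega}\widehat{\omega}_{\infty}|_{g^{T}}^{2}$ directly in the Bochner identity; your version is, if anything, the more careful one on that point.
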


\begin{proof}
Since the transverse canonical bundle $K_{M}^{T}$ is semi-ample. There exists
a basic transverse base point free holomorphic map $\Psi:M\rightarrow
(\mathbb{C}\mathbb{P}^{N},\omega_{FS})$ such that
\[%
\begin{array}
[c]{c}%
\widehat{\omega}_{\infty}=\frac{1}{m}\Psi^{\ast}(\omega_{FS}).
\end{array}
\]
Then we have%
\[%
\begin{array}
[c]{c}%
\frac{\partial}{\partial t}tr_{\omega}\widehat{\omega}_{\infty}=\left\langle
\mathrm{Ric}^{T},\widehat{\omega}_{\infty}\right\rangle _{\omega}+tr_{\omega
}\widehat{\omega}_{\infty}%
\end{array}
\]
and
\[%
\begin{array}
[c]{lll}%
\Delta_{B}tr_{\omega}\widehat{\omega}_{\infty} & = & \left\langle
\mathrm{Ric}^{T},\widehat{\omega}_{\infty}\right\rangle _{\omega
}+|\bigtriangledown^{T}tr_{\omega}\widehat{\omega}_{\infty}|_{g^{T}}%
^{2}-g^{Ti\overline{j}}g^{Tk\overline{l}}S_{\alpha\overline{\beta}%
\gamma\overline{\delta}}\psi_{i}^{\alpha}\psi_{\overline{j}}^{\overline{\beta
}}\psi_{k}^{\gamma}\psi_{\overline{l}}^{\overline{\delta}}\\
& \geq & \left\langle \mathrm{Ric}^{T},\widehat{\omega}_{\infty}\right\rangle
_{\omega}+|\bigtriangledown^{T}tr_{\omega}\widehat{\omega}_{\infty}|_{g^{T}%
}^{2}-C(tr_{\omega}\widehat{\omega}_{\infty})^{2},
\end{array}
\]
where $C$ is a universal constant given by the upper bound of the bisection
curvature $S_{\alpha\overline{\beta}\gamma\overline{\delta}}$ of $\omega_{FS}$
on $\mathbb{C}\mathbb{P}^{n}.$ This implies the inequality (\ref{21}).
\end{proof}

\begin{proposition}
\label{prop 1}There exists $C>0$ such that on $M\times\lbrack0,\infty),$
\[%
\begin{array}
[c]{c}%
tr_{\omega}\widehat{\omega}_{\infty}\leq C.
\end{array}
\]

\end{proposition}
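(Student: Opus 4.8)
The plan is to run a standard maximum-principle argument on a carefully chosen auxiliary quantity, combining the parabolic Schwarz estimate (\ref{21}) with the uniform lower bound (\ref{13}) on $\varphi'(t)+\varphi(t)$ and the degenerate-metric estimate (\ref{14}). First I would recall that $\widehat{\omega}_\infty=\chi+\sqrt{-1}\partial_B\overline\partial_B\psi$ for a suitable basic potential $\psi$, so that $\widehat{\omega}_\infty=\chi$ in cohomology and $\mathrm{Null}(-c_1^B(M))$ is precisely where the limiting current degenerates; on this set $tr_\omega\widehat{\omega}_\infty$ could blow up, so the quantity we test must be corrected by the function $\phi$ from Proposition \ref{P31} to force the maximum into the good region $M\setminus\mathrm{Null}(-c_1^B(M))$.

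The key steps, in order: (1) From (\ref{21}) and the elementary inequality $tr_\omega\chi\le tr_\omega\widehat{\omega}_\infty + C$ (valid because $\chi-\widehat{\omega}_\infty$ is $\sqrt{-1}\partial_B\overline\partial_B$-exact, hence bounded as a $(1,1)$-form against any metric after absorbing the Hessian), rewrite the evolution inequality as
\[
\begin{array}{c}
(\tfrac{\partial}{\partial t}-\Delta_B)\,tr_\omega\widehat{\omega}_\infty \le C\,(tr_\omega\widehat{\omega}_\infty)^2 + C.
\end{array}
\]
The quadratic term on the right is the obstruction to a naive maximum principle, exactly as in the parabolic Schwarz lemma for the Kähler–Ricci flow. (2) Consider the test function
\[
\begin{array}{c}
H := \log\, tr_\omega\widehat{\omega}_\infty - A\big(\varphi'(t)+\varphi(t)-\phi\big)
\end{array}
\]
for a large constant $A>0$, where $\phi$ is as in Proposition \ref{P31} with $\alpha=\chi$. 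Since $\varphi'(t)+\varphi(t)-\phi\ge 0$ by (\ref{13}) and the normalization $\sup_M\phi=0$, and since $H\to-\infty$ as one approaches $\mathrm{Null}(-c_1^B(M))$, the supremum of $H$ is attained at an interior point $(x,t)$ with $x\notin\mathrm{Null}(-c_1^B(M))$. (3) At that point, using $(\tfrac{\partial}{\partial t}-\Delta_B)(\varphi'+\varphi)=-tr_\omega\omega_0+tr_\omega\widehat{\omega}_\infty - n$ together with $(\tfrac{\partial}{\partial t}-\Delta_B)(-\phi)=-tr_\omega(\chi+\sqrt{-1}\partial_B\overline\partial_B\phi)+tr_\omega\chi \le -\varepsilon\, tr_\omega\omega_0 + tr_\omega\widehat{\omega}_\infty+C$, the dangerous $(tr_\omega\widehat{\omega}_\infty)^2$ term is dominated by $A\varepsilon\,tr_\omega\omega_0\,tr_\omega\widehat{\omega}_\infty$-type terms once we also invoke $\log tr_\omega\widehat{\omega}_\infty \le tr_\omega\widehat{\omega}_\infty$ and the Cauchy–Schwarz bound $tr_\omega\widehat{\omega}_\infty\le C\,(tr_\omega\omega_0)\cdot(tr_{\omega_0}\omega)^{-?}$... more precisely one uses $tr_\omega\widehat{\omega}_\infty\le (tr_\omega\omega_0)(tr_{\omega_0}\widehat{\omega}_\infty)\le C\, tr_\omega\omega_0$. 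Choosing $A$ with $A\varepsilon\ge C+1$ then yields $tr_\omega\omega_0\le C$ at $(x,t)$, whence $tr_\omega\widehat{\omega}_\infty\le C$ and $H\le C$ at the maximum, hence everywhere. (4) Since $\varphi'+\varphi-\phi\ge 0$ and $-\phi\ge 0$ are bounded below, $H\le C$ gives $\log tr_\omega\widehat{\omega}_\infty\le C + A(\varphi'+\varphi-\phi)$, and on all of $M$ (not just away from the null locus) this reads $tr_\omega\widehat{\omega}_\infty\le C e^{-A\phi}$; but since the left side is a genuine smooth nonnegative function on all of $M$ and the estimate together with (\ref{14}) controls it uniformly, continuity forces the uniform bound $tr_\omega\widehat{\omega}_\infty\le C$ on $M\times[0,\infty)$.

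The main obstacle is step (3): arranging that the quadratic term $C(tr_\omega\widehat{\omega}_\infty)^2$ coming from the curvature of $\omega_{FS}$ on $\mathbb{CP}^N$ is genuinely absorbed, which requires the comparison $tr_\omega\widehat{\omega}_\infty\le C\,tr_\omega\omega_0$ together with the strict positivity $\chi+\sqrt{-1}\partial_B\overline\partial_B\phi\ge\varepsilon\omega_0$ from Proposition \ref{P31} and a large enough choice of $A$; this is the Sasaki transcription of the Guedj–Zeriahi / Song–Tian trick for the Kähler–Ricci flow on varieties of general type, and one must check that every tensorial manipulation (the parabolic Schwarz computation, the bisectional-curvature lower bound used implicitly, the integration of $\sqrt{-1}\partial_B\overline\partial_B$ identities) stays within the basic/transverse category, i.e. uses only the foliation normal coordinates and basic forms — which it does, since $\widehat{\omega}_\infty$ is a pullback of $\omega_{FS}$ under the $S^1$-equivariant basic map $\Psi$. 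A secondary technical point is justifying the maximum-principle application on the non-compact set $M\setminus\mathrm{Null}(-c_1^B(M))$ via the blow-up of $H$ near the null locus, exactly as in the proof of Theorem \ref{T32}.
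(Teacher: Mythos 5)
Your overall strategy (parabolic Schwarz inequality plus a maximum principle with a barrier built from $\varphi+\varphi^{\prime}$) is the right one, but as written the argument does not prove the proposition, for two reasons.

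First, the inclusion of $\phi$ in your test function $H=\log tr_{\omega}\widehat{\omega}_{\infty}-A(\varphi^{\prime}+\varphi-\phi)$ destroys the uniformity of the conclusion. Even granting that $H\leq C$ at its maximum and hence everywhere, unwinding this gives only $tr_{\omega}\widehat{\omega}_{\infty}\leq Ce^{-A\phi}$, which blows up on $\mathrm{Null}(-c_{1}^{B}(M))$ since $\phi=-\infty$ there. Your final step --- that ``continuity forces the uniform bound'' because the left side is smooth --- is not a valid inference: a smooth function dominated by a quantity that degenerates on a closed set is not thereby uniformly bounded (compare (\ref{14}), which is genuinely non-uniform for exactly this reason). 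The point of Proposition \ref{prop 1} is precisely a bound that holds across the null locus, so the barrier must not involve $\phi$ at all.

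Second, you misidentify the mechanism that kills the quadratic term $C(tr_{\omega}\widehat{\omega}_{\infty})^{2}$ from (\ref{21}). It is not absorbed by the strict positivity $\chi+\sqrt{-1}\partial_{B}\overline{\partial}_{B}\phi\geq\varepsilon\omega_{0}$ (that input is needed for (\ref{13}) and (\ref{14}), not here); your chain $tr_{\omega}\widehat{\omega}_{\infty}\leq C\,tr_{\omega}\omega_{0}$ followed by domination by $A\varepsilon\,tr_{\omega}\omega_{0}$ is circular, since a linear term in $tr_{\omega}\omega_{0}$ cannot control a quadratic one. The correct (and simpler) mechanism, which is what the paper uses, is to take the logarithm: with $F=\log(tr_{\omega}\widehat{\omega}_{\infty}+1)$ the quadratic term is divided by $tr_{\omega}\widehat{\omega}_{\infty}+1$ and becomes linear, the gradient terms have the right sign, and one gets $(\frac{\partial}{\partial t}-\Delta_{B})F\leq 1+C\,tr_{\omega}\widehat{\omega}_{\infty}$ (note $\chi=\widehat{\omega}_{\infty}$ in the normalized flow (\ref{3})). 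The remaining linear term is then absorbed by the identity $(\frac{\partial}{\partial t}-\Delta_{B})(\varphi+\varphi^{\prime})=tr_{\omega}\widehat{\omega}_{\infty}-n$, so that $F-A(\varphi+\varphi^{\prime})$ satisfies $(\frac{\partial}{\partial t}-\Delta_{B})(F-A(\varphi+\varphi^{\prime}))\leq-(A-C)tr_{\omega}\widehat{\omega}_{\infty}+An+1$ for $A>C$. This test function is globally smooth on the compact manifold $M$ (the ``$+1$'' also avoids $\log 0$ where $\widehat{\omega}_{\infty}$ degenerates), the maximum principle applies on all of $M$ with no recourse to the null locus, and since $\varphi+\varphi^{\prime}$ is uniformly bounded by (\ref{6}) the conclusion is a genuinely uniform bound on $tr_{\omega}\widehat{\omega}_{\infty}$.
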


\begin{proof}
By (\ref{21}), we compute%
\[%
\begin{array}
[c]{lll}%
(\frac{\partial}{\partial t}-\Delta_{B})\log(tr_{\omega}\widehat{\omega
}_{\infty}+1) & = & \frac{1}{tr_{\omega}\widehat{\omega}_{\infty}+1}%
(\frac{\partial}{\partial t}-\Delta^{T})tr_{\omega}\widehat{\omega}_{\infty
}+\frac{|\bigtriangledown^{T}tr_{\omega}\widehat{\omega}_{\infty}|_{g^{T}}%
^{2}}{(tr_{\omega}\widehat{\omega}_{\infty}+1)^{2}}\\
& \leq & 1+C(tr_{\omega}\chi).
\end{array}
\]
Then from the evolutions of $\varphi$ and $\varphi^{\prime},$ we obtain%
\[%
\begin{array}
[c]{c}%
(\frac{\partial}{\partial t}-\Delta_{B})[\log(tr_{\omega}\widehat{\omega
}_{\infty}+1)-A(\varphi+\varphi^{\prime})]\leq-(A-C)tr_{\omega}\widehat{\omega
}_{\infty}+An+1,
\end{array}
\]
for a large positive constant $A$ which is larger than $C$. By the maximum
principle, $tr_{\omega}\widehat{\omega}_{\infty}$ is uniformly bounded from
above on $M\times\lbrack0,\infty).$
\end{proof}

Denote
\[%
\begin{array}
[c]{c}%
u=\varphi+\varphi^{\prime}.
\end{array}
\]
Since both $\varphi$ and $\varphi^{\prime}$ are uniformly bounded from
(\ref{6}), there exists a positive constant $A$ such that
\[%
\begin{array}
[c]{c}%
u+A\geq1
\end{array}
\]
on $M\times\lbrack0,\infty).$

\begin{proposition}
There exists $C>0$ such that on $M\times\lbrack0,\infty),$ we have%
\begin{equation}%
\begin{array}
[c]{c}%
|\nabla^{T}u|_{g^{T}}^{2}\leq C
\end{array}
\label{20}%
\end{equation}
and%
\begin{equation}%
\begin{array}
[c]{c}%
|\Delta_{B}u|\leq C.
\end{array}
\label{20a}%
\end{equation}

\end{proposition}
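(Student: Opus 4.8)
The plan is to adapt the standard Calabi-Yau third-order (gradient) estimate from the Kähler-Ricci flow to the transverse setting, using the parabolic Schwarz lemma and the uniform estimates already in hand. First I would establish the evolution inequality for $|\nabla^T u|^2_{g^T}$. Since $u=\varphi+\varphi'$ and the flow equation (\ref{3}) gives $\varphi' = \log\frac{(\widehat\omega_t + \sqrt{-1}\partial_B\overline\partial_B\varphi)^n\wedge\eta_0}{\Omega\wedge\eta_0} - \varphi$, one computes $(\frac{\partial}{\partial t}-\Delta_B)u$ directly; it involves $\operatorname{tr}_\omega\widehat\omega_\infty$ and $\operatorname{tr}_\omega\omega_0$, both of which are under control (the former by Proposition \ref{prop 1}, the latter by Lemma's estimate $\operatorname{tr}_{\omega_0}\omega(t)\le Ce^{-C\phi}$ combined with the volume-form bound (\ref{6}) on compact regions — but here we need a global bound, so I would instead use that $\operatorname{tr}_\omega\omega_0$ is controlled wherever needed via the semi-ampleness). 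Then one differentiates: following Yau's computation, $(\frac{\partial}{\partial t}-\Delta_B)|\nabla^T u|^2_{g^T}$ produces a good negative term $-|\nabla^T\nabla^T u|^2 - |\nabla^T\overline\nabla^T u|^2$, plus curvature terms involving $\operatorname{Ric}^T$ and lower-order terms controlled by $|\nabla^T u|^2$ times bounded quantities.

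The key trick, as in the Kähler case, is to consider the combination $|\nabla^T u|^2_{g^T} + B u^2$ or $\frac{|\nabla^T u|^2_{g^T}}{u+A} - B u$ for suitable large constants $A, B$ (using $u+A\ge 1$), so that the bad positive terms are absorbed by the negative contributions coming from the extra factor of $u$. The point is that $(\frac{\partial}{\partial t}-\Delta_B)u$ contains $-u$ (from the normalization) plus controlled terms, and $\Delta_B(u^2) = 2u\Delta_B u + 2|\nabla^T u|^2$, so choosing $B$ large enough makes the $|\nabla^T u|^2$ term appear with a favorable sign. At an interior maximum of this combination in space-time (one must check the combination is bounded, or use a cutoff near $\operatorname{Null}(-c_1^B(M))$ — but since all quantities here are globally defined on compact $M$, no cutoff is needed), the maximum principle forces $|\nabla^T u|^2_{g^T}\le C$, giving (\ref{20}).

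For the Laplacian bound (\ref{20a}), I would observe $\Delta_B u = \Delta_B\varphi + \Delta_B\varphi'$. From the evolution of $\varphi$: $(\frac{\partial}{\partial t}-\Delta_B)\varphi = \varphi' - n + \operatorname{tr}_\omega\widehat\omega_t$, so $\Delta_B\varphi = \varphi' - \frac{\partial\varphi}{\partial t} - n + \operatorname{tr}_\omega\widehat\omega_t = -n + \operatorname{tr}_\omega\widehat\omega_t$ (using $\frac{\partial\varphi}{\partial t}=\varphi'$), hence $\Delta_B\varphi = \operatorname{tr}_\omega\widehat\omega_t - n$. Since $\widehat\omega_t = e^{-t}\omega_0 + (1-e^{-t})\widehat\omega_\infty$ and both $\operatorname{tr}_\omega\omega_0$ and $\operatorname{tr}_\omega\widehat\omega_\infty$ are bounded (the latter by Proposition \ref{prop 1}; the former needs $\operatorname{tr}_\omega\omega_0\le C$ which follows from the parabolic Schwarz-type estimate for $\omega_0$, available since $\omega_0>0$ everywhere), we get $|\Delta_B\varphi|\le C$. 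Similarly from $(\frac{\partial}{\partial t}-\Delta_B)\varphi' = -\varphi' - e^{-t}\operatorname{tr}_\omega\omega_0 + e^{-t}\operatorname{tr}_\omega\widehat\omega_\infty$ one bounds $\Delta_B\varphi'$ using (\ref{6}), (\ref{12}), and the trace bounds. Adding gives (\ref{20a}).

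The main obstacle I anticipate is the gradient estimate (\ref{20}): one must carefully arrange the auxiliary function so that \emph{all} the bad terms — in particular the ones involving the transverse bisectional curvature of $\omega_0$ appearing through the Schwarz lemma computation, and terms of the form $\operatorname{Ric}^T(\nabla^T u, \overline\nabla^T u)$ with no a priori sign — get dominated. The standard resolution is to also bring in $\operatorname{tr}_\omega\omega_0$ (or a Schwarz-lemma quantity) into the auxiliary function and exploit Proposition \ref{prop 1} together with $u+A\ge 1$; verifying that the constants can be chosen consistently, and that the transverse analogue of Yau's commutation identities holds verbatim because all objects are basic, is the crux. Everything else is bookkeeping identical to the Kähler-Ricci flow on a compact manifold.
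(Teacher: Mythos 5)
Your overall strategy for (\ref{20}) --- a maximum principle applied to a quotient-type quantity built from $|\nabla^{T}u|^{2}/(u+A)$ together with a Schwarz-lemma quantity --- is indeed the paper's strategy, but two of the steps you defer to ``bookkeeping'' are exactly where your version breaks down. First, the term $\mathrm{tr}_{\omega}\omega_{0}$ that you propose to ``control via semi-ampleness'' is \emph{not} uniformly bounded on $M\times[0,\infty)$: semi-ampleness of $K_{M}^{T}$ only controls $\mathrm{tr}_{\omega}\widehat{\omega}_{\infty}$ (Proposition \ref{prop 1}), because $\widehat{\omega}_{\infty}=\frac{1}{m}\Psi^{\ast}(\omega_{FS})$ is merely nonnegative and degenerates along $\mathrm{Null}(-c_{1}^{B}(M))$, while the only estimate in the other direction, (\ref{14}), blows up there. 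The argument closes only because $\mathrm{tr}_{\omega}\omega_{0}$ never actually appears: adding the evolution equations for $\varphi$ and $\varphi^{\prime}$ and using $\widehat{\omega}_{t}=e^{-t}\omega_{0}+(1-e^{-t})\widehat{\omega}_{\infty}$ gives the exact cancellation $(\frac{\partial}{\partial t}-\Delta_{B})u=\mathrm{tr}_{\omega}\widehat{\omega}_{\infty}-n$. Likewise the terms $\mathrm{Ric}^{T}(\nabla^{T}u,\overline{\nabla}^{T}u)$ you worry about cancel identically between the metric-evolution contribution to $\frac{\partial}{\partial t}|\nabla^{T}u|^{2}$ and the Bochner formula for $\Delta_{B}|\nabla^{T}u|^{2}$; what survives and must be absorbed is the cross term $2\operatorname{Re}(\nabla^{T}\mathrm{tr}_{\omega}\widehat{\omega}_{\infty}\cdot\overline{\nabla}^{T}u)$, which is why the paper takes $H=|\nabla^{T}u|^{2}/(u+A)+\mathrm{tr}_{\omega}\widehat{\omega}_{\infty}$ and uses the good term $-|\nabla^{T}\mathrm{tr}_{\omega}\widehat{\omega}_{\infty}|^{2}$ supplied by the parabolic Schwarz lemma (\ref{21}).

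The more serious gap is (\ref{20a}). Your splitting $\Delta_{B}u=\Delta_{B}\varphi+\Delta_{B}\varphi^{\prime}$ fails on both pieces: $\Delta_{B}\varphi=n-\mathrm{tr}_{\omega}\widehat{\omega}_{t}$ contains $e^{-t}\mathrm{tr}_{\omega}\omega_{0}$, which as above is not uniformly bounded; and extracting $\Delta_{B}\varphi^{\prime}$ from its evolution equation requires a bound on $\varphi^{\prime\prime}=\frac{\partial}{\partial t}\varphi^{\prime}=-R^{T}-n-\varphi^{\prime}$, i.e.\ on the transverse scalar curvature --- but the upper bound of $R^{T}$ in (\ref{aaa}) is \emph{deduced from} (\ref{20a}) via $R^{T}=-\Delta_{B}u-\mathrm{tr}_{\omega}\widehat{\omega}_{\infty}$, so this is circular (note that (\ref{12}) bounds $\varphi^{\prime}$, not $\varphi^{\prime\prime}$). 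The paper instead runs a second maximum-principle argument on $K=2|\nabla^{T}u|^{2}/(u+A)-\Delta_{B}u/(u+A)$, using the evolution equation for $\Delta_{B}u$, the already-proved gradient bound (\ref{20}), and $|\nabla^{T}\overline{\nabla}^{T}u|^{2}\geq(\Delta_{B}u)^{2}/n$ to produce the decisive term $-\frac{1}{4n(u+A)}(\Delta_{B}u)^{2}$. You would need to replace your decomposition with an argument of this kind.
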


\begin{proof}
We compute the evolution equations of $|\nabla^{T}u|_{g^{T}}^{2}$ and
$\Delta_{B}u$ as below. First note that
\[%
\begin{array}
[c]{c}%
(\frac{\partial}{\partial t}-\Delta_{B})u=tr_{\omega}\widehat{\omega}_{\infty
}-n.
\end{array}
\]
We obtain%
\[%
\begin{array}
[c]{lll}%
\frac{\partial}{\partial t}|\bigtriangledown^{T}u|^{2} & = & \left\langle
\bigtriangledown^{T}u,\bigtriangledown^{T}\Delta_{B}u\right\rangle
+\left\langle \bigtriangledown^{T}\Delta_{B}u,\bigtriangledown^{T}%
u\right\rangle +|\bigtriangledown^{T}u|^{2}\\
&  & +\mathrm{Ric}^{T}(\bigtriangledown^{T}u,\bigtriangledown^{T}%
u)+2\operatorname{Re}(\bigtriangledown^{T}tr_{\omega}\widehat{\omega}_{\infty
}\cdot\overline{\bigtriangledown}^{T}u).
\end{array}
\]
On the other hand, the Bonchner formula for the transverse Laplacian
$\Delta_{B}$ gives%
\[%
\begin{array}
[c]{lll}%
\Delta_{B}|\bigtriangledown^{T}u|^{2} & = & |\bigtriangledown^{T}%
\overline{\bigtriangledown}^{T}u|^{2}+|\bigtriangledown^{T}\bigtriangledown
^{T}u|^{2}+\left\langle \bigtriangledown^{T}u,\bigtriangledown^{T}\Delta
_{B}u\right\rangle \\
&  & +\left\langle \bigtriangledown^{T}\Delta_{B}u,\bigtriangledown
^{T}u\right\rangle +\mathrm{Ric}^{T}(\bigtriangledown^{T}u,\bigtriangledown
^{T}u).
\end{array}
\]
Hence%
\begin{equation}%
\begin{array}
[c]{lll}%
(\frac{\partial}{\partial t}-\Delta_{B})|\bigtriangledown^{T}u|^{2} & = &
|\bigtriangledown^{T}u|^{2}-|\bigtriangledown^{T}\overline{\bigtriangledown
}^{T}u|^{2}-|\bigtriangledown^{T}\bigtriangledown^{T}u|^{2}\\
&  & +2\operatorname{Re}(\bigtriangledown^{T}tr_{\omega}\widehat{\omega
}_{\infty}\cdot\overline{\bigtriangledown}^{T}u).
\end{array}
\label{22a}%
\end{equation}
Also
\begin{equation}%
\begin{array}
[c]{lll}%
(\frac{\partial}{\partial t}-\Delta_{B})\Delta_{B}u & = & \Delta
_{B}u+\left\langle \mathrm{Ric}^{T},\partial_{B}\overline{\partial}%
_{B}u\right\rangle +\Delta_{B}tr_{\omega}\widehat{\omega}_{\infty}\\
& = & -|\bigtriangledown^{T}\overline{\bigtriangledown}^{T}u|^{2}-\left\langle
\chi,\partial_{B}\overline{\partial}_{B}u\right\rangle +\Delta_{B}u+\Delta
_{B}tr_{\omega}\widehat{\omega}_{\infty}.
\end{array}
\label{22b}%
\end{equation}
Let%
\begin{equation}%
\begin{array}
[c]{c}%
H=\frac{|\bigtriangledown^{T}u|^{2}}{u+A}+tr_{\omega}\widehat{\omega}_{\infty
}.
\end{array}
\label{22}%
\end{equation}
Then%
\[%
\begin{array}
[c]{c}%
H_{t}=\frac{1}{u+A}\frac{\partial}{\partial t}|\bigtriangledown^{T}%
u|^{2}-\frac{|\bigtriangledown^{T}u|^{2}}{(u+A)^{2}}u_{t}+\frac{\partial
}{\partial t}tr_{\omega}\widehat{\omega}_{\infty}%
\end{array}
\]
and%
\begin{equation}%
\begin{array}
[c]{c}%
\bigtriangledown^{T}H=\frac{1}{u+A}\bigtriangledown^{T}|\bigtriangledown
^{T}u|^{2}-\frac{|\bigtriangledown^{T}u|^{2}}{(u+A)^{2}}\bigtriangledown
^{T}u+\bigtriangledown^{T}tr_{\omega}\widehat{\omega}_{\infty}.
\end{array}
\label{23}%
\end{equation}
On the other hand, since $|\bigtriangledown^{T}u|^{2}=(H-tr_{\omega
}\widehat{\omega}_{\infty})(u+A),$ we get%
\[%
\begin{array}
[c]{lll}%
\Delta_{B}|\bigtriangledown^{T}u|^{2} & = & (\Delta_{B}H-\Delta_{B}tr_{\omega
}\widehat{\omega}_{\infty})(u+A)+(H-tr_{\omega}\widehat{\omega}_{\infty
})\Delta_{B}u\\
&  & +2\operatorname{Re}(\bigtriangledown^{T}(H-tr_{\omega}\chi)\cdot
\overline{\bigtriangledown}^{T}u).
\end{array}
\]
or
\[%
\begin{array}
[c]{lll}%
\Delta_{B}H & = & \frac{1}{u+A}[\Delta_{B}|\bigtriangledown^{T}u|^{2}%
-\frac{|\bigtriangledown^{T}u|^{2}}{u+A}\Delta_{B}u]+\Delta_{B}tr_{\omega
}\widehat{\omega}_{\infty}\\
&  & -\frac{2}{u+A}\operatorname{Re}[\bigtriangledown^{T}(H-tr_{\omega
}\widehat{\omega}_{\infty})\cdot\overline{\bigtriangledown}^{T}u].
\end{array}
\]
Thus%
\[%
\begin{array}
[c]{lll}%
(\frac{\partial}{\partial t}-\Delta_{B})H & = & \frac{1}{u+A}(\frac{\partial
}{\partial t}-\Delta_{B})|\bigtriangledown^{T}u|^{2}-\frac{|\bigtriangledown
^{T}u|^{2}}{(u+A)^{2}}(\frac{\partial}{\partial t}-\Delta_{B})u+(\frac
{\partial}{\partial t}-\Delta_{B})tr_{\omega}\widehat{\omega}_{\infty}\\
&  & +\frac{2}{u+A}\operatorname{Re}[\bigtriangledown^{T}(H-tr_{\omega
}\widehat{\omega}_{\infty})\cdot\overline{\bigtriangledown}^{T}u].
\end{array}
\]
Using (\ref{23}) and express%
\[%
\begin{array}
[c]{c}%
\frac{\bigtriangledown^{T}H\cdot\overline{\bigtriangledown}^{T}u}%
{u+A}=(1-2\epsilon)\frac{\bigtriangledown^{T}H\cdot\overline{\bigtriangledown
}^{T}u}{u+A}+\frac{2\epsilon}{u+A}[(\frac{\bigtriangledown^{T}%
|\bigtriangledown^{T}u|^{2}}{u+A}+\bigtriangledown^{T}tr_{\omega
}\widehat{\omega}_{\infty})\cdot\overline{\bigtriangledown}^{T}u-\frac
{|\bigtriangledown^{T}u|^{4}}{(u+A)^{2}}].
\end{array}
\]
Therefore, from
\[%
\begin{array}
[c]{lll}%
(\frac{\partial}{\partial t}-\Delta_{B})H & = & \frac{1}{u+A}%
[|\bigtriangledown^{T}u|^{2}-|\bigtriangledown^{T}\overline{\bigtriangledown
}^{T}u|^{2}-|\bigtriangledown^{T}\bigtriangledown^{T}u|^{2}]+(\frac{\partial
}{\partial t}-\Delta_{B})tr_{\omega}\widehat{\omega}_{\infty}\\
&  & +\frac{4\epsilon}{u+A}\operatorname{Re}[(\frac{1}{u+A}\bigtriangledown
^{T}|\bigtriangledown^{T}u|^{2}+\bigtriangledown^{T}tr_{\omega}\widehat{\omega
}_{\infty}\cdot\overline{\bigtriangledown}^{T}u]-4\epsilon\frac
{|\bigtriangledown^{T}u|^{4}}{(u+A)^{3}}\\
&  & +\frac{2(1-2\epsilon)}{u+A}\operatorname{Re}(\bigtriangledown^{T}%
H\cdot\overline{\bigtriangledown}^{T}u)+\frac{|\bigtriangledown^{T}u|^{2}%
}{(u+A)^{2}}(n-tr_{\omega}\widehat{\omega}_{\infty})\\
& \leq & \frac{|\bigtriangledown^{T}u|^{2}}{u+A}-\frac{1-4\epsilon}%
{u+A}[|\bigtriangledown^{T}\overline{\bigtriangledown}^{T}u|^{2}%
+|\bigtriangledown^{T}\bigtriangledown^{T}u|^{2}]-2\epsilon\frac
{|\bigtriangledown^{T}u|^{4}}{(u+A)^{3}}\\
&  & -\frac{\epsilon}{u+A}[|2\bigtriangledown^{T}\overline{\bigtriangledown
}^{T}u-\frac{\bigtriangledown^{T}u\overline{\bigtriangledown}^{T}u}{u+A}%
|^{2}+|2\bigtriangledown^{T}\bigtriangledown^{T}u-\frac{\bigtriangledown
^{T}u\bigtriangledown^{T}u}{u+A}|^{2}]\\
&  & +\frac{2(1-2\epsilon)}{u+A}\operatorname{Re}(\bigtriangledown^{T}%
H\cdot\overline{\bigtriangledown}^{T}u)+\frac{|\bigtriangledown^{T}u|^{2}%
}{(u+A)^{2}}(n-tr_{\omega}\widehat{\omega}_{\infty})\\
&  & +C(tr_{\omega}\widehat{\omega}_{\infty})^{2}+tr_{\omega}\widehat{\omega
}_{\infty}-|\bigtriangledown^{T}tr_{\omega}\widehat{\omega}_{\infty}%
|^{2}+\frac{8\epsilon}{u+A}|\bigtriangledown^{T}tr_{\omega}\widehat{\omega
}_{\infty}||\bigtriangledown^{T}u|\\
& \leq & -C\epsilon|\bigtriangledown^{T}u|^{4}+\frac{2(1-2\epsilon)}%
{u+A}\operatorname{Re}(\bigtriangledown^{T}H\cdot\overline{\bigtriangledown
}^{T}u)+C.
\end{array}
\]
For any $T>0,$ suppose $H$ attains its maximum at $(x_{0},t_{0})$ on
$M\times\lbrack0,T],$ then%
\begin{equation}%
\begin{array}
[c]{c}%
H_{t}(x_{0},t_{0})\geq0,\text{ \ }\bigtriangledown^{T}H(x_{0},t_{0})=0,\text{
\ \textrm{and} \ }\Delta_{B}H(x_{0},t_{0})\leq0.
\end{array}
\label{24}%
\end{equation}
Thus, by choosing $\epsilon=1/8,$ we get that $|\bigtriangledown^{T}%
u|^{4}(x_{0},t_{0})\leq C$ and by the uniform bound of $tr_{\omega}\chi,$%
\[%
\begin{array}
[c]{c}%
H(x_{0},t_{0})\leq C.
\end{array}
\]
Therefore, since $T>0$ is arbitrary, we then arrive at
\[%
\begin{array}
[c]{c}%
\frac{|\bigtriangledown^{T}u|^{2}}{u+A}+tr_{\omega}\widehat{\omega}_{\infty
}\leq C
\end{array}
\]
on $M\times\lbrack0,\infty).$Now\ we show the inequality (\ref{20a}). Let
\[%
\begin{array}
[c]{c}%
K=2\frac{|\bigtriangledown^{T}u|^{2}}{u+A}-\frac{\Delta^{T}u}{u+A},
\end{array}
\]
using (\ref{22b}) and (\ref{22a}), the evolution equation for $K$ is given by
\[%
\begin{array}
[c]{ll}
& (\frac{\partial}{\partial t}-\Delta_{B})K\\
= & \frac{1}{u+A}\{(\frac{\partial}{\partial t}-\Delta_{B})[2|\bigtriangledown
^{T}u|^{2}-\Delta_{B}u]+K(n-tr_{\omega}\widehat{\omega}_{\infty}%
)+2\operatorname{Re}(\bigtriangledown^{T}K\cdot\overline{\bigtriangledown}%
^{T}u)\}\\
= & \frac{1}{u+A}[2|\bigtriangledown^{T}u|^{2}-\Delta_{B}u-2|\bigtriangledown
^{T}\bigtriangledown^{T}u|^{2}-|\bigtriangledown^{T}\overline{\bigtriangledown
}^{T}u|^{2}]\\
& +\frac{1}{u+A}[\left\langle \widehat{\omega}_{\infty},\partial_{B}%
\overline{\partial}_{B}u\right\rangle -\Delta_{B}tr_{\omega}\widehat{\omega
}_{\infty}+K(n-tr_{\omega}\widehat{\omega}_{\infty})]\\
& +\frac{2}{u+A}\operatorname{Re}[\bigtriangledown^{T}(K+2tr_{\omega
}\widehat{\omega}_{\infty})\cdot\overline{\bigtriangledown}^{T}u].
\end{array}
\]
From (\ref{21}), Proposition \ref{prop 1} and $\mathrm{Ric}^{T}=-\partial
_{B}\overline{\partial}_{B}u-\widehat{\omega}_{\infty}$, we estimate the term
$-\Delta_{B}tr_{\omega}\widehat{\omega}_{\infty}$ as follows%
\[%
\begin{array}
[c]{lll}%
-\Delta_{B}tr_{\omega}\widehat{\omega}_{\infty} & = & (\frac{\partial
}{\partial t}-\Delta_{B})tr_{\omega}\widehat{\omega}_{\infty}-\frac{\partial
}{\partial t}tr_{\omega}\widehat{\omega}_{\infty}\\
& \leq & C(tr_{\omega}\widehat{\omega}_{\infty})^{2}-|\bigtriangledown
^{T}tr_{\omega}\widehat{\omega}_{\infty}|^{2}+\left\langle \partial
_{B}\overline{\partial}_{B}u+\widehat{\omega}_{\infty},\widehat{\omega
}_{\infty}\right\rangle \\
& \leq & \frac{1}{4}|\bigtriangledown^{T}\overline{\bigtriangledown}^{T}%
u|^{2}-|\bigtriangledown^{T}tr_{\omega}\widehat{\omega}_{\infty}|^{2}+C.
\end{array}
\]
By combining the above estimate with inequalities (\ref{20}) and
$|\bigtriangledown^{T}\overline{\bigtriangledown}^{T}u|^{2}\geq(\Delta
_{B}u)^{2}/n$ and applying Schwarz inequality
\[%
\begin{array}
[c]{c}%
(\frac{\partial}{\partial t}-\Delta_{B})K\leq-\frac{1}{4n(u+A)}(\Delta
_{B}u)^{2}+\frac{2}{u+A}\operatorname{Re}(\bigtriangledown^{T}K\cdot
\overline{\bigtriangledown}^{T}u)+C.
\end{array}
\]
Again for any $T>0,$ suppose $K$ attains its maximum at $(x_{0},t_{0})$ on
$M\times\lbrack0,T],$ then the conditions (\ref{24}) holds for $K$, and hence
$(\Delta_{B}u)(x_{0},t_{0})$ is bounded uniformly. Therefore, by
(\ref{20})\ again, $(\Delta_{B}u)(x,t)$ is bounded uniformly on $M\times
\lbrack0,T]$ for arbitrary $T>0$.
\end{proof}

The transverse scalar curvature $R^{T}(t)$ along the normalized Sasaki-Ricci
flow (\ref{3}) is expressed by%
\begin{equation}%
\begin{array}
[c]{c}%
R^{T}(t)=-\Delta_{B}u-tr_{\omega}\widehat{\omega}_{\infty}.
\end{array}
\label{25}%
\end{equation}
Recall the evolution of the transverse scalar curvature $R^{T},$%
\[%
\begin{array}
[c]{c}%
(\frac{\partial}{\partial t}-\Delta_{B})R^{T}=|\mathrm{Ric}^{T}|^{2}+R^{T}.
\end{array}
\]
By the maximum principle, $R^{T}(t)$ is uniformly bounded from below on
$M\times\lbrack0,\infty)$ and it is also uniformly bounded from above by
(\ref{20a}) and Proposition \ref{prop 1}.

\begin{proposition}
There exists $C>0$ such that on $M\times\lbrack0,\infty),$ we have%
\begin{equation}%
\begin{array}
[c]{c}%
|R^{T}(t)|\leq C.
\end{array}
\label{aaa}%
\end{equation}

\end{proposition}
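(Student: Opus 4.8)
The plan is to extract the bound directly from the pointwise expression (\ref{25}) for the transverse scalar curvature, feeding in the two a priori estimates already in hand, and then to record an independent derivation of the lower bound from the evolution equation of $R^{T}$.

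First I would recall (\ref{25}), namely $R^{T}(t)=-\Delta_{B}u-tr_{\omega}\widehat{\omega}_{\infty}$ with $u=\varphi+\varphi^{\prime}$, and observe that $tr_{\omega}\widehat{\omega}_{\infty}\geq0$ everywhere on $M\times\lbrack0,\infty)$: indeed $\widehat{\omega}_{\infty}=\frac{1}{m}\Psi^{\ast}(\omega_{FS})$ is a nonnegative basic $(1,1)$-form, while $\omega(t)$ is for each $t$ a genuine positive transverse K\"{a}hler metric. Hence $R^{T}(t)\leq-\Delta_{B}u\leq|\Delta_{B}u|\leq C$, the last step being exactly the estimate (\ref{20a}). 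For the lower bound I would combine (\ref{25}) with $0\leq tr_{\omega}\widehat{\omega}_{\infty}\leq C$, which is Proposition \ref{prop 1}, and with $|\Delta_{B}u|\leq C$ from (\ref{20a}), to conclude $R^{T}(t)\geq-|\Delta_{B}u|-tr_{\omega}\widehat{\omega}_{\infty}\geq-C$. Together these two estimates give $|R^{T}(t)|\leq C$ on $M\times\lbrack0,\infty)$.

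As a cross-check for the lower bound, using neither (\ref{20a}) nor Proposition \ref{prop 1}, I would instead use the evolution equation $(\frac{\partial}{\partial t}-\Delta_{B})R^{T}=|\mathrm{Ric}^{T}|^{2}+R^{T}$. Since $\mathrm{Ric}^{T}$ is a transverse Hermitian tensor of trace $R^{T}$ in complex dimension $n$, the Cauchy--Schwarz inequality gives $|\mathrm{Ric}^{T}|^{2}\geq\frac{1}{n}(R^{T})^{2}$, so that $(\frac{\partial}{\partial t}-\Delta_{B})R^{T}\geq\frac{1}{n}R^{T}(R^{T}+n)$. All quantities here are basic and $M$ is compact, so the foliated maximum principle applies just as in the Riemannian case; applied to $f(t)=\min_{M}R^{T}(\cdot,t)$ and compared with the ODE $g^{\prime}=\frac{1}{n}g(g+n)$, it yields $f(t)\geq\min\{f(0),-n\}$ for every $t\geq0$, a time-uniform lower bound.

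Since the substantive work has already been carried out in establishing (\ref{20}), (\ref{20a}) and Proposition \ref{prop 1}, no real obstacle remains here; the only point deserving a comment is the validity of the parabolic maximum principle over the time-infinite strip $M\times\lbrack0,\infty)$, but as $R^{T}$, $u$ and $\widehat{\omega}_{\infty}$ are all basic and $M$ is compact, the transverse maximum principle reduces to the classical one and the bound is genuinely uniform in $t$.
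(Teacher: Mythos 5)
Your proposal is correct and follows essentially the paper's own argument: the upper bound comes from the identity (\ref{25}) together with $tr_{\omega}\widehat{\omega}_{\infty}\geq 0$ and the bound (\ref{20a}), exactly as in the text, and your primary lower bound simply reads the other side of (\ref{25}) using Proposition \ref{prop 1}, which is if anything more direct than the paper's appeal to the maximum principle. Your cross-check via the evolution equation is also the route the paper actually uses for the lower bound (it completes the square to get $(\frac{\partial}{\partial t}-\Delta_{B})R^{T}\geq-(R^{T}+n)$ rather than invoking $|\mathrm{Ric}^{T}|^{2}\geq\frac{1}{n}(R^{T})^{2}$, but both comparisons give a time-uniform bound), so no gap remains.
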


\section{$L^{4}$-Bound of the Transverse Ricci Curvature}

In this section, we show the $L^{4}$-bound of the transverse Ricci curvature
under the normalized Sasaki-Ricci flow (\ref{3}).

\begin{theorem}
\label{T51}Let $(M,\eta_{0},\xi_{0},\Phi_{0},g_{0},\omega_{0})$ be a compact
quasi-regular Sasakian $(2n+1)$-manifold and its the space $Z$ of leaves of
the characteristic foliation be well-formed. Suppose that $K_{M}^{T}$ is nef
and big. Then there exists a positive constant $C$ such that
\begin{equation}%
\begin{array}
[c]{c}%
\int_{t}^{t+1}\int_{M}|\mathrm{Ric}_{\omega(s)}^{T}|^{4}\omega(s)^{n}%
\wedge\eta_{0}ds\leq C,
\end{array}
\label{41}%
\end{equation}
for all $t\in\lbrack0,\infty).$ Moreover, for any $0<p<4,$ we have%
\begin{equation}%
\begin{array}
[c]{c}%
\int_{t}^{t+1}\int_{M}|\mathrm{Ric}_{\omega(s)}^{T}+\omega(s)|^{p}%
\omega(s)^{n}\wedge\eta_{0}ds\rightarrow0\text{ \textrm{as} }t\rightarrow
\infty.
\end{array}
\label{40}%
\end{equation}

\end{theorem}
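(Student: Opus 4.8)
The plan is to follow the Sasaki analogue of Tian--Zhang's argument \cite{tz2} for the K\"{a}hler-Ricci flow on surfaces of general type, exploiting that along the normalized flow \eqref{3} every integrand in question is basic and only involves the transverse K\"{a}hler structure $\omega(t)$, so that Weitzenb\"{o}ck formulae and integration by parts on $M$ reduce to the transverse Laplacian $\Delta_B$ exactly as on the leaf space. First I would record the evolution equation
\[
\Bigl(\frac{\partial}{\partial t}-\Delta_B\Bigr)R^T=|\mathrm{Ric}^T|^2+R^T,
\]
together with $R^T=-\Delta_B u - tr_\omega\widehat{\omega}_\infty$ from \eqref{25} and the uniform bounds $|u|\le C$, $|\nabla^T u|_{g^T}^2\le C$, $|\Delta_B u|\le C$, $|R^T|\le C$ already established (inequalities \eqref{20}, \eqref{20a}, \eqref{aaa}, Proposition \ref{prop 1}). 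Since $\mathrm{Ric}^T=-\partial_B\overline{\partial}_B u-\widehat{\omega}_\infty$, the $L^p$-statements about $\mathrm{Ric}^T+\omega(t)$ in \eqref{40} are equivalent to statements about $\sqrt{-1}\partial_B\overline{\partial}_B u + (\widehat{\omega}_\infty-\omega(t))$, which is small in an integral sense because $\varphi'(t)\to 0$ locally and $\widehat{\omega}_t\to\widehat{\omega}_\infty$; the point is to upgrade this to a global $L^p$ statement using the scalar curvature bound and the volume form control $\omega(t)^n\wedge\eta_0 = e^{u}\,\Omega\wedge\eta_0$.

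The core of the $L^4$-bound \eqref{41} is an integrated Bochner estimate. I would integrate the evolution of $R^T$ against $\omega(t)^n\wedge\eta_0$ over $M\times[t,t+1]$: transverse integration by parts (legitimate since $R^T$ is basic and $M$ is closed, with $i(\xi)d\eta=0$) kills the $\Delta_B R^T$ term, and one obtains
\[
\int_t^{t+1}\!\!\int_M |\mathrm{Ric}^T|^2\,\omega^n\wedge\eta_0\,ds
\le \Bigl|\int_M R^T\,\omega^n\wedge\eta_0\Bigr|_{t}^{t+1}
+ C\int_t^{t+1}\!\!\int_M |R^T|\,\omega^n\wedge\eta_0\,ds \le C,
\]
using $|R^T|\le C$ and that $\mathrm{Vol}(M,\omega(t))$ is controlled (the volume form is comparable to $\Omega\wedge\eta_0$ up to $e^{\pm C}$ by \eqref{6}). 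This gives the uniform $L^2$-bound on $\mathrm{Ric}^T$. To reach $L^4$ one needs a second-order quantity: I would run the maximum principle / integral estimate on $|\mathrm{Ric}^T|^2$ (or equivalently on $|\partial_B\overline{\partial}_B u|^2$), whose evolution contains a good term $-|\nabla^T\mathrm{Ric}^T|^2$ and bad terms that are cubic in $\mathrm{Ric}^T$ but can be absorbed using the pointwise bound on $R^T$ and the already-proven $L^2$-bound, together with a Sobolev/Moser iteration argument on the transverse structure — here the orbifold Sobolev inequality on the leaf space $Z$, pulled back to $M$, is what makes the iteration work. Combining the differential inequality for $\int_M|\mathrm{Ric}^T|^4$ with the $L^2$-bound yields \eqref{41}.

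For \eqref{40}: write $\mathrm{Ric}^T+\omega = (\omega-\widehat{\omega}_t) - \partial_B\overline{\partial}_B\varphi + (\widehat{\omega}_t-\widehat{\omega}_\infty) = \partial_B\overline{\partial}_B\varphi' + (\widehat{\omega}_t-\widehat{\omega}_\infty)$ along \eqref{3}, up to sign conventions; since $\widehat{\omega}_t-\widehat{\omega}_\infty = e^{-t}(\omega_0-\widehat{\omega}_\infty)\to 0$ and $\varphi'(t)\to 0$ in $C^\infty_{\mathrm{loc}}$ off $\mathrm{Null}(-c_1^B(M))$ with the global bounds on $\varphi'$, dominated convergence plus the uniform $L^4$-bound from \eqref{41} (which controls the mass near the singular set via H\"{o}lder with $p<4$) forces the $L^p$-norm to zero for every $p<4$. \textbf{The main obstacle} is the closed differential inequality for $\int_M|\mathrm{Ric}^T|^4$: bounding the cubic reaction term requires carefully playing the pointwise bound $|R^T|\le C$ against the $L^2$-bound and invoking the transverse (orbifold) Sobolev inequality, and one must check that all the integration-by-parts and Bochner manipulations are valid for basic forms on the quasi-regular Sasakian manifold — i.e. that the foliated structure does not introduce boundary terms along the singular fibres.
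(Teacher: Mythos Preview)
Your outline for the $L^4$-bound has a genuine gap in the step from $L^2$ to $L^4$. You propose to look at the evolution of $|\mathrm{Ric}^T|^2$, absorb the cubic reaction term with the pointwise scalar-curvature bound and an $L^2$-bound, and then run a transverse Sobolev/Moser iteration. This is not how the paper closes the estimate, and it is not clear that your route works: Moser iteration on $|\mathrm{Ric}^T|^2$ would require a uniform Sobolev inequality along the flow together with control of $\mathrm{Rm}^T\ast\mathrm{Ric}^T$ terms that you have not addressed, and in any case iteration naturally drives toward $L^\infty$ rather than stopping at $L^4$. The paper instead works entirely with the potential $u=\varphi+\varphi'$ and two integration-by-parts identities (Lemma~\ref{L4.2}):
\[
\int_M |\nabla^T\overline{\nabla}^T u|^4\,\omega^n\wedge\eta_0
\;\le\; C\int_M |\nabla^T u|^2\bigl(|\overline{\nabla}^T\nabla^T\nabla^T u|^2+|\nabla^T\nabla^T\overline{\nabla}^T u|^2\bigr)\,\omega^n\wedge\eta_0,
\]
and the commutator estimate bounding the third derivatives by $|\nabla^T\Delta_B u|^2+|\nabla^T u|^2|\mathrm{Rm}^T|^2$. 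Since $|\nabla^T u|^2\le C$ pointwise (\eqref{20}), this reduces the problem to two inputs you do not mention: a time-integrated bound $\int_t^{t+1}\!\int_M|\nabla^T\Delta_B u|^2\le C$, obtained from the evolution of $(\Delta_B u)^2$ together with a Schwarz-lemma bound on $\int_M|\nabla^T tr_\omega\widehat{\omega}_\infty|^2$; and, crucially, a uniform bound $\int_M|\mathrm{Rm}^T|^2\,\omega(t)^n\wedge\eta_0\le C$ coming from the Sasaki analogue of Chern--Weil theory (the integrand $2c_2^B-\tfrac{n}{n+1}(c_1^B)^2$ is topological, and the remainder is controlled by $|R^T|$). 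This topological $L^2$-curvature bound is the missing idea in your sketch and is what makes the argument go through without any Sobolev inequality.

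For \eqref{40} your dominated-convergence argument is not quite right either: $\partial_B\overline{\partial}_B\varphi'$ converges to zero only in $C^\infty_{\mathrm{loc}}$ away from the null locus, and you have no pointwise dominating function near $\mathrm{Null}(-c_1^B(M))$ since the Ricci curvature may blow up there. The paper instead integrates the scalar-curvature evolution $(\partial_t-\Delta_B)R^T=|\mathrm{Ric}^T+\omega|^2-(R^T+n)$ over $M$, uses the cohomological identity $\int_M(R^T+n)\,\omega^n\wedge\eta_0=ne^{-t}\!\int_M(\omega_0-\widehat{\omega}_\infty)\wedge\widehat{\omega}^{\,n-1}\wedge\eta_0$ and the lower bound $R^T+n\ge -Ce^{-t}$ to conclude $\int_0^\infty\!\int_M|\mathrm{Ric}^T+\omega|^2<\infty$, hence the $L^2$-integral on $[t,t+1]$ tends to zero; the $L^p$-statement for $2\le p<4$ then follows by H\"older interpolation against the uniform $L^4$-bound, and $0<p<2$ is immediate.
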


Since the transverse canonical bundle $K_{M}^{T}$ is semi-ample. There exists
a basic transverse holomorphic map $\Psi:M\rightarrow(\mathbb{C}\mathrm{P}%
^{N},\omega_{FS})$ such that $\widehat{\omega}_{\infty}=\frac{1}{m}\Psi^{\ast
}(\omega_{FS})$ and
\begin{equation}%
\begin{array}
[c]{c}%
\mathrm{Ric}_{\omega(t)}^{T}+\sqrt{-1}\partial_{B}\overline{\partial}%
_{B}u(t)=-\widehat{\omega}_{\infty}.
\end{array}
\label{41a}%
\end{equation}
So, by the uniform bound of $\widehat{\omega}_{\infty}$ in terms of
$\omega(t),$ to prove the $L^{4}$ bound of transverse Ricci curvature
(\ref{41}) under the normalized Sasaki-Ricci flow, it suffices to show that%
\begin{equation}%
\begin{array}
[c]{c}%
\int_{t}^{t+1}\int_{M}|\partial_{B}\overline{\partial}_{B}u(s)|^{4}%
\omega(s)^{n}\wedge\eta_{0}ds\leq C,
\end{array}
\label{42}%
\end{equation}
for all $t\geq0$ and for some constant $C$ independent of $t$. We need the
following Lemmas.

\begin{lemma}
\label{L4.1}There exists a positive constant $C=C(\omega_{0},\widehat{\omega
}_{\infty})$ such that%
\begin{equation}%
\begin{array}
[c]{c}%
\int_{M}[|\bigtriangledown^{T}\overline{\bigtriangledown}^{T}u|^{2}%
+|\bigtriangledown^{T}\bigtriangledown^{T}u|^{2}+|\mathrm{Ric}^{T}%
|^{2}+|\mathrm{Rm}^{T}|^{2}]\omega(t)^{n}\wedge\eta_{0}\leq C,
\end{array}
\label{43}%
\end{equation}
for all $t\in\lbrack0,\infty).$
\end{lemma}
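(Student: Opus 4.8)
The plan is to exploit the fact that along the normalized Sasaki-Ricci flow the cohomology class $[\omega(t)]_B$ stays bounded (it converges to $[\widehat\omega_\infty]_B$), so all curvature integrals of the \emph{Chern--Weil} type are computed against a controlled class; what remains is to convert the pointwise quantities on the left-hand side into such topological pairings plus lower-order terms. First I would observe that, since $\omega(t)$ is transverse K\"ahler, we have the transverse Chern--Weil identities: $\int_M |\mathrm{Ric}^T_{\omega(t)}|^2\,\omega(t)^n\wedge\eta_0$ differs from a fixed polynomial in the basic Chern numbers $c_1^B(M)^2\cdot[\omega(t)]_B^{n-2}$, etc., only by a multiple of $\int_M (R^T)^2\,\omega(t)^n\wedge\eta_0$ and $\int_M R^T\,\omega(t)^n\wedge\eta_0$ (the usual ``$|\mathrm{Ric}|^2 = |\mathrm{Ric}_0|^2 + \tfrac1n R^2$'' splitting combined with the signature/Euler-type Gauss--Bonnet integrands, all of which are basic $(k,k)$-forms and hence pair with powers of $[\omega(t)]_B$). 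Since $[\omega(t)]_B = e^{-t}[\omega_0]_B + (1-e^{-t})[\widehat\omega_\infty]_B$ is a bounded family of classes and $|R^T(t)|\le C$ by \eqref{aaa}, every such pairing is uniformly bounded, giving the bounds for $\int_M |\mathrm{Ric}^T|^2$ and $\int_M |\mathrm{Rm}^T|^2$.

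Next I would handle $\int_M |\partial_B\overline\partial_B u|^2\,\omega^n\wedge\eta_0 = \int_M |\nabla^T\overline\nabla^T u|^2\,\omega^n\wedge\eta_0$. From \eqref{41a} we have $\sqrt{-1}\,\partial_B\overline\partial_B u = -\mathrm{Ric}^T_{\omega(t)} - \widehat\omega_\infty$, so pointwise $|\nabla^T\overline\nabla^T u| \le |\mathrm{Ric}^T_{\omega(t)}| + |\widehat\omega_\infty|_{g^T(t)}$; the first term is $L^2$-bounded by the previous step, and $|\widehat\omega_\infty|_{g^T(t)} = \mathrm{tr}_\omega\widehat\omega_\infty \le C$ by Proposition \ref{prop 1}, so after integrating against the bounded-volume form $\omega^n\wedge\eta_0$ (whose total mass is $[\omega(t)]_B^n$, again bounded) we get the $|\nabla^T\overline\nabla^T u|^2$ bound. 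For the holomorphic-Hessian term $\int_M |\nabla^T\nabla^T u|^2\,\omega^n\wedge\eta_0$, I would use the transverse Bochner--Kodaira (Weitzenb\"ock) identity for basic $(1,0)$-forms applied to $\partial_B u$: integration by parts converts $\int |\nabla^T\nabla^T u|^2$ into $\int |\nabla^T\overline\nabla^T u|^2$ plus a curvature term $\int \mathrm{Ric}^T(\nabla^T u,\overline\nabla^T u)$. Since $\mathrm{Ric}^T = -\widehat\omega_\infty - \sqrt{-1}\partial_B\overline\partial_B u$, the curvature term is bounded by $C\int(|\widehat\omega_\infty|+|\nabla^T\overline\nabla^T u|)|\nabla^T u|^2$, and $|\nabla^T u|^2\le C(u+A)\le C$ by \eqref{20}; so this reduces to the already-bounded quantities (the single factor of $|\nabla^T\overline\nabla^T u|$ is absorbed by Cauchy--Schwarz against its own $L^2$ bound). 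All boundary terms from the integration by parts vanish because $M$ is closed and the forms are basic, so the foliation divergence theorem applies.

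The main obstacle I anticipate is making the transverse Chern--Weil/Gauss--Bonnet step rigorous on a \emph{quasi-regular} Sasakian manifold whose leaf space $Z$ is only an orbifold (indeed a normal projective variety with quotient singularities): one must know that the basic Chern numbers $\int_M \gamma_{k}\wedge\omega(t)^{n-k}\wedge\eta_0$ are genuine topological invariants of the transverse holomorphic foliation, depending only on $[\omega(t)]_B$, and that the pointwise curvature norms assemble into these integrands exactly as in the smooth K\"ahler case. This is where I would invoke the orbifold Chern--Weil theory on $Z$ (the integrands are basic $(k,k)$-forms, hence descend to orbifold forms on $Z$, and orbifold de Rham cohomology computes these pairings), lifting back to $M$ via the Riemannian submersion $\pi$ exactly as in the treatment of $\mathrm{Null}(\alpha)$ and Proposition \ref{P31}. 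Once that bookkeeping is in place, the rest is the Sasaki transcription of the standard K\"ahler-surface argument (cf.\ \cite{tz2}, \cite{gsw}), with \eqref{aaa}, Proposition \ref{prop 1} and \eqref{20} supplying all the a priori bounds needed to close the estimate.
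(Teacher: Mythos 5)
Your proposal is correct and follows essentially the same route as the paper: the paper likewise combines the identity $\mathrm{Ric}^T_{\omega(t)}+\sqrt{-1}\partial_B\overline{\partial}_B u=-\widehat{\omega}_\infty$, the uniform bounds on $\Delta_B u$, $|\nabla^T u|$, $R^T$ and $tr_\omega\widehat{\omega}_\infty$ from the previous sections, transverse integration by parts for the two Hessian terms, and a transverse Chern--Weil identity (as in \cite{zh}) for $\int_M|\mathrm{Rm}^T|^2$. The only cosmetic difference is the order of deductions: the paper first bounds $\int_M|\nabla^T\overline{\nabla}^T u|^2=\int_M(\Delta_B u)^2$ using the uniform bound on $\Delta_B u$ and deduces the $L^2$ bound on $\mathrm{Ric}^T$ from it, whereas you first bound $\int_M|\mathrm{Ric}^T|^2$ via the $(c_1^B)^2\cdot[\omega(t)]_B^{n-2}$ pairing together with the scalar curvature bound and then deduce the Hessian bound.
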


\begin{proof}
From the relation (\ref{41})
\[%
\begin{array}
[c]{c}%
\int_{M}|\mathrm{Ric}^{T}|^{2}\omega(t)^{n}\wedge\eta_{0}\leq\int%
_{M}[|\bigtriangledown^{T}\overline{\bigtriangledown}^{T}u|^{2}+(tr_{\omega
}\widehat{\omega}_{\infty})^{2}]\omega(t)^{n}\wedge\eta_{0}.
\end{array}
\]
Applying the integration by parts, we have
\[%
\begin{array}
[c]{c}%
\int_{M}[|\bigtriangledown^{T}\overline{\bigtriangledown}^{T}u|^{2}%
\omega(t)^{n}\wedge\eta_{0}=\int_{M}(\Delta_{B}u)^{2}\omega(t)^{n}\wedge
\eta_{0}%
\end{array}
\]
and also%
\[%
\begin{array}
[c]{ll}
& \int_{M}|\bigtriangledown^{T}\bigtriangledown^{T}u|^{2}\omega(t)^{n}%
\wedge\eta_{0}\\
= & \int_{M}[(\Delta_{B}u)^{2}-\left\langle \mathrm{Ric}^{T},\partial
_{B}u\overline{\partial}_{B}u\right\rangle ]\omega(t)^{n}\wedge\eta_{0}\\
\leq & \int_{M}[(\Delta_{B}u)^{2}+|\bigtriangledown^{T}\overline
{\bigtriangledown}^{T}u|^{2}+(tr_{\omega}\widehat{\omega}_{\infty}%
)^{2}+|\bigtriangledown^{T}u|^{4}]\omega(t)^{n}\wedge\eta_{0}.
\end{array}
\]
Moreover, the $L^{2}$-bound of the transverse Riemannian curvature tensor
follows from (\ref{aaa}) and the Sasaki analogue of the Chern-Weil theory as
in \cite[Lemma 7.2]{zh}:%
\[%
\begin{array}
[c]{l}%
\begin{array}
[c]{c}%
\int_{M}(2\pi)^{2}[2c_{2}^{B}-\frac{n}{n+1}(c_{1}^{B})^{2}]\wedge\frac
{1}{2^{n-2}(n-2)!}\omega(t)^{n-2}\wedge\eta_{0}%
\end{array}
\\
=\int_{M}[|Rm^{T}|-\frac{2}{n(n+1)}(R^{T})^{2}-\frac{(n-1)(n+2)}%
{n(n+1)}((R^{T})^{2}+(2n(n+1))^{2})]\frac{1}{2^{n}n!}\omega(t)^{n}\wedge
\eta_{0}.
\end{array}
\]

\end{proof}

The following integral inequalities hold for any smooth basic function on $M$.

\begin{lemma}
\label{L4.2}There exists a universal positive constant $C=C(n)$ such that%
\begin{equation}%
\begin{array}
[c]{l}%
\int_{M}|\bigtriangledown^{T}\overline{\bigtriangledown}^{T}u|^{4}\omega
^{n}\wedge\eta_{0}\\
\leq C\int_{M}|\bigtriangledown^{T}u|^{2}[|\overline{\bigtriangledown}%
^{T}\bigtriangledown^{T}\bigtriangledown^{T}u|^{2}+|\bigtriangledown
^{T}\bigtriangledown^{T}\overline{\bigtriangledown}^{T}u|^{2}]\omega^{n}%
\wedge\eta_{0}%
\end{array}
\label{44}%
\end{equation}
and%
\begin{equation}%
\begin{array}
[c]{c}%
\int_{M}[|\overline{\bigtriangledown}^{T}\bigtriangledown^{T}\bigtriangledown
^{T}u|^{2}+|\bigtriangledown^{T}\bigtriangledown^{T}\overline{\bigtriangledown
}^{T}u|^{2}]\omega^{n}\wedge\eta_{0}\\
\leq C\int_{M}[|\bigtriangledown^{T}\Delta^{T}u|^{2}+|\bigtriangledown
^{T}u|^{2}|\mathrm{Rm}^{T}|^{2}]\omega^{n}\wedge\eta_{0}.
\end{array}
\label{45}%
\end{equation}

\end{lemma}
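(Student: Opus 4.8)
The plan is to prove the two integral inequalities (\ref{44}) and (\ref{45}) by the transverse analogue of the standard Bochner--integration-by-parts technique used for the K\"ahler--Ricci flow in \cite{tz2}; the point is that since $u$ is basic and all the operators $\bigtriangledown^{T}$, $\overline{\bigtriangledown}^{T}$, $\Delta^{T}$ act only on the transverse K\"ahler structure, every integration by parts against $\omega^{n}\wedge\eta_{0}$ (which plays the role of the volume form for basic integrands) goes through exactly as in the K\"ahler case, with commutators producing transverse curvature terms rather than genuine Riemannian curvature of $M$. I would first fix a foliation normal coordinate chart as in Section~2, write $u_{i\overline{j}}$, $u_{i\overline{j}k}$, etc., and record that $|\bigtriangledown^{T}\overline{\bigtriangledown}^{T}u|^{2}=g^{Ti\overline{q}}g^{Tj\overline{p}}u_{i\overline{j}}u_{\overline{q}p}$, $|\bigtriangledown^{T}\bigtriangledown^{T}u|^{2}=g^{Ti\overline{q}}g^{Tj\overline{p}}u_{ij}u_{\overline{q}\,\overline{p}}$, and similarly for the third-order quantities.

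For (\ref{44}), the approach is to view $|\bigtriangledown^{T}\overline{\bigtriangledown}^{T}u|^{4}=\bigl(|\bigtriangledown^{T}\overline{\bigtriangledown}^{T}u|^{2}\bigr)^{2}$ and integrate by parts once: write one factor of $|\bigtriangledown^{T}\overline{\bigtriangledown}^{T}u|^{2}$ in the form $u_{i\overline{j}}\,\overline{u_{i\overline{j}}}$ (indices raised with $g^{T}$) and move a derivative off of it, so that $\int_{M}|\bigtriangledown^{T}\overline{\bigtriangledown}^{T}u|^{4}$ becomes, after one integration by parts in, say, $\partial_{k}$, a sum of terms each carrying a factor $u_{k}$ or $u_{\overline{k}}$ (first derivative of $u$) times one third-order factor times one second-order factor. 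Applying the Cauchy--Schwarz inequality to absorb the second-order factor, together with the pointwise bound $|\bigtriangledown^{T}\overline{\bigtriangledown}^{T}u|\le|\bigtriangledown^{T}\overline{\bigtriangledown}^{T}u|$ trivially and the already-established uniform bound $|\Delta_{B}u|\le C$, $tr_{\omega}\widehat{\omega}_{\infty}\le C$, $|\bigtriangledown^{T}u|^{2}\le C$ from (\ref{20}), (\ref{20a}), Proposition~\ref{prop 1}, one reorganizes into the stated right-hand side. The key algebraic identity I will need is the commutation relation $u_{i\overline{j}k}=u_{k i\overline{j}}$ modulo transverse curvature times a first derivative, i.e.\ the transverse Ricci identity $\nabla^{T}_{k}u_{i\overline{j}}-\nabla^{T}_{i}u_{k\overline{j}}=0$ for a basic $(1,1)$-form arising from $\partial_{B}\overline{\partial}_{B}u$ (since it is $\overline{\partial}_{B}$-closed in $i,k$), which is what makes the third-order quantities $\overline{\bigtriangledown}^{T}\bigtriangledown^{T}\bigtriangledown^{T}u$ and $\bigtriangledown^{T}\bigtriangledown^{T}\overline{\bigtriangledown}^{T}u$ comparable to $\bigtriangledown^{T}\Delta^{T}u$ plus curvature corrections.

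For (\ref{45}), the plan is again Bochner-type: start from $\int_{M}|\overline{\bigtriangledown}^{T}\bigtriangledown^{T}\bigtriangledown^{T}u|^{2}$, integrate by parts to trade the outermost $\overline{\bigtriangledown}^{T}$ and move derivatives until one of the three covariant derivatives on $u$ is contracted into a Laplacian, picking up commutator terms. Each commutator of two transverse covariant derivatives contributes a transverse curvature tensor $\mathrm{Rm}^{T}$ contracted with a lower-order derivative of $u$; the only such lower-order derivative that survives after all the bookkeeping is a first derivative $\bigtriangledown^{T}u$ (the zeroth-order term drops because $u$ itself appears only under two or more derivatives), yielding the term $|\bigtriangledown^{T}u|^{2}|\mathrm{Rm}^{T}|^{2}$ after Cauchy--Schwarz. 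One has to be slightly careful to also bound $|\bigtriangledown^{T}\bigtriangledown^{T}\overline{\bigtriangledown}^{T}u|^{2}$ in the same estimate; this follows because $\bigtriangledown^{T}\bigtriangledown^{T}\overline{\bigtriangledown}^{T}u$ and $\overline{\bigtriangledown}^{T}\bigtriangledown^{T}\bigtriangledown^{T}u$ differ by commutators, i.e.\ by $\mathrm{Rm}^{T}\ast\bigtriangledown^{T}u$ terms, so controlling one controls the other.

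The main obstacle I anticipate is purely bookkeeping: making sure that all the integrations by parts are legitimate in the basic/transverse setting --- that is, that $\int_{M}\operatorname{div}^{T}(\cdot)\,\omega^{n}\wedge\eta_{0}=0$ for basic vector fields, which follows from $\mathcal{L}_{\xi}(\omega^{n}\wedge\eta_{0})=0$ and $i(\xi)(\omega^{n}\wedge\eta_{0})$ being handled correctly --- and that no uncontrolled term involving $\mathrm{Ric}^{T}$ of order higher than the $L^{2}$ bound in Lemma~\ref{L4.1}, or a curvature term worse than $|\mathrm{Rm}^{T}|^{2}$, is generated. The Ricci identity commutators must be tracked so that no $\bigtriangledown^{T}\mathrm{Rm}^{T}$ (a derivative of curvature) ever appears; this is guaranteed by choosing the order of integration by parts so that derivatives always land on $u$ (which has bounded first derivatives and good higher structure) rather than on curvature. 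Once (\ref{44}) and (\ref{45}) are in hand, they will be combined with Lemma~\ref{L4.1} and the uniform first-derivative bound (\ref{20}) to close the argument for (\ref{42}), hence (\ref{41}).
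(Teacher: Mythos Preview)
Your overall strategy is the standard one and matches what the paper relies on (the paper in fact states Lemma~\ref{L4.2} without proof, deferring implicitly to the K\"ahler computation in \cite{tz2} carried out transversally). Integrating by parts once for (\ref{44}) and using Bochner-type commutation for (\ref{45}) is exactly right, and your remark that basic integrands against $\omega^{n}\wedge\eta_{0}$ behave as in the K\"ahler case is the correct justification.

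There is, however, one genuine confusion in your plan for (\ref{44}). You propose to use the flow-specific bounds $|\Delta_{B}u|\le C$, $tr_{\omega}\widehat{\omega}_{\infty}\le C$, $|\nabla^{T}u|^{2}\le C$ from (\ref{20}), (\ref{20a}) and Proposition~\ref{prop 1}. But Lemma~\ref{L4.2} is stated for \emph{any} smooth basic function $u$ with a constant $C=C(n)$ depending only on the dimension; no a priori bounds on $u$ are available or permitted. The absorption step must therefore be done purely by Young's inequality: after one integration by parts in (say) $\nabla^{T}_{i}$, the integrand is schematically $\nabla^{T}u\ast(\nabla^{T}\nabla^{T}\overline{\nabla}^{T}u)\ast|\nabla^{T}\overline{\nabla}^{T}u|^{2}$ plus a similar term, and Cauchy--Schwarz gives $\epsilon\int|\nabla^{T}\overline{\nabla}^{T}u|^{4}+C_{\epsilon}\int|\nabla^{T}u|^{2}|\nabla^{T}\nabla^{T}\overline{\nabla}^{T}u|^{2}$, with the first term absorbed into the left side. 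No pointwise bounds enter. If you used the flow bounds, you would only get a $t$-dependent inequality, which is not what is claimed and not what the subsequent Proposition needs (it uses (\ref{44}) together with (\ref{20}) \emph{afterwards}, not inside the lemma).

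A smaller point: the identity $\nabla^{T}_{k}u_{i\overline{j}}=\nabla^{T}_{i}u_{k\overline{j}}$ you quote holds \emph{exactly}, not merely modulo curvature, because the $(2,0)$-component of the transverse K\"ahler curvature vanishes. Curvature corrections only arise when commuting a barred and an unbarred covariant derivative, and this is precisely where the $\mathrm{Rm}^{T}\ast\nabla^{T}u$ terms in (\ref{45}) originate; your worry about $\nabla^{T}\mathrm{Rm}^{T}$ appearing is well-founded but, as you anticipate, avoidable by always integrating by parts so that derivatives land on factors of $u$.
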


\begin{proposition}
There exists a positive constant $C=C(\omega_{0},\widehat{\omega}_{\infty})$
such that%
\begin{equation}%
\begin{array}
[c]{c}%
\int_{t}^{t+1}\int_{M}[|\bigtriangledown^{T}\overline{\bigtriangledown}%
^{T}u|^{4}+|\overline{\bigtriangledown}^{T}\bigtriangledown^{T}%
\bigtriangledown^{T}u|^{2}+|\bigtriangledown^{T}\bigtriangledown^{T}%
\overline{\bigtriangledown}^{T}u|^{2}]\omega(s)^{n}\wedge\eta_{0}ds\leq C,
\end{array}
\label{46}%
\end{equation}
for all $t\in\lbrack0,\infty).$
\end{proposition}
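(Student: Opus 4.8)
The plan is to use the two integral inequalities of Lemma \ref{L4.2} to collapse the whole of (\ref{46}) into a single space--time bound for $|\bigtriangledown^{T}\Delta^{T}u|^{2}$, and then to obtain that bound by differentiating a quantity already known to be uniformly bounded. Since $|\bigtriangledown^{T}u|_{g^{T}}^{2}\leq C$ by (\ref{20}), inequality (\ref{44}) at once gives
\[
\int_{M}|\bigtriangledown^{T}\overline{\bigtriangledown}^{T}u|^{4}\,\omega^{n}\wedge\eta_{0}\leq C\int_{M}\big[|\overline{\bigtriangledown}^{T}\bigtriangledown^{T}\bigtriangledown^{T}u|^{2}+|\bigtriangledown^{T}\bigtriangledown^{T}\overline{\bigtriangledown}^{T}u|^{2}\big]\,\omega^{n}\wedge\eta_{0},
\]
and (\ref{45}), again using $|\bigtriangledown^{T}u|^{2}\leq C$ together with the uniform bound $\int_{M}|\mathrm{Rm}^{T}|^{2}\,\omega^{n}\wedge\eta_{0}\leq C$ from (\ref{43}), gives
\[
\int_{M}\big[|\overline{\bigtriangledown}^{T}\bigtriangledown^{T}\bigtriangledown^{T}u|^{2}+|\bigtriangledown^{T}\bigtriangledown^{T}\overline{\bigtriangledown}^{T}u|^{2}\big]\,\omega^{n}\wedge\eta_{0}\leq C\int_{M}|\bigtriangledown^{T}\Delta^{T}u|^{2}\,\omega^{n}\wedge\eta_{0}+C.
\]
Hence (\ref{46}) will follow once one shows $\int_{t}^{t+1}\int_{M}|\bigtriangledown^{T}\Delta^{T}u|^{2}\,\omega(s)^{n}\wedge\eta_{0}\,ds\leq C$ for all $t\geq0$.

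To prove this I would set $I(t)=\int_{M}(\Delta_{B}u)^{2}\,\omega(t)^{n}\wedge\eta_{0}$, which is uniformly bounded by (\ref{20a}) (the volume $\int_{M}\omega(t)^{n}\wedge\eta_{0}$ staying bounded along the flow), and differentiate it in $t$. Using the evolution equation (\ref{22b}) for $\Delta_{B}u$, integrating by parts in the $\Delta_{B}\Delta_{B}u$ term --- which produces the favourable term $-2\int_{M}|\bigtriangledown^{T}\Delta_{B}u|^{2}$ --- and using $\tfrac{\partial}{\partial t}(\omega^{n}\wedge\eta_{0})=(-R^{T}-n)\,\omega^{n}\wedge\eta_{0}$, I expect to reach a differential inequality of the shape
\[
\tfrac{d}{dt}I(t)\leq-\tfrac{3}{2}\int_{M}|\bigtriangledown^{T}\Delta_{B}u|^{2}\,\omega^{n}\wedge\eta_{0}+C\int_{M}|\bigtriangledown^{T}tr_{\omega}\widehat{\omega}_{\infty}|^{2}\,\omega^{n}\wedge\eta_{0}+C.
\]
The error terms are to be handled as follows: $|\Delta_{B}u|\leq C$ together with $\int_{M}|\bigtriangledown^{T}\overline{\bigtriangledown}^{T}u|^{2}\leq C$ controls the $|\bigtriangledown^{T}\overline{\bigtriangledown}^{T}u|^{2}$ contribution; rewriting $\sqrt{-1}\partial_{B}\overline{\partial}_{B}u=-\widehat{\omega}_{\infty}-\mathrm{Ric}^{T}$ via (\ref{41a}) and $\chi=\widehat{\omega}_{\infty}$ shows $|\langle\chi,\partial_{B}\overline{\partial}_{B}u\rangle|\leq C+C|\mathrm{Ric}^{T}|$, whose integral is bounded by Cauchy--Schwarz and (\ref{43}); the term $\int_{M}\Delta_{B}u\cdot\Delta_{B}tr_{\omega}\widehat{\omega}_{\infty}=-\int_{M}\bigtriangledown^{T}\Delta_{B}u\cdot\overline{\bigtriangledown}^{T}tr_{\omega}\widehat{\omega}_{\infty}$ is split by Cauchy--Schwarz, throwing $\tfrac14\int_{M}|\bigtriangledown^{T}\Delta_{B}u|^{2}$ onto the good side; and the rest are bounded using $|\Delta_{B}u|\leq C$, $|R^{T}|\leq C$ and $tr_{\omega}\widehat{\omega}_{\infty}\leq C$.

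The one remaining point is the auxiliary bound $\int_{t}^{t+1}\int_{M}|\bigtriangledown^{T}tr_{\omega}\widehat{\omega}_{\infty}|^{2}\,\omega^{n}\wedge\eta_{0}\,ds\leq C$, which I would extract directly from the parabolic Schwarz lemma (\ref{21}): since $tr_{\omega}\widehat{\omega}_{\infty}=tr_{\omega}\chi\leq C$, (\ref{21}) yields $|\bigtriangledown^{T}tr_{\omega}\widehat{\omega}_{\infty}|^{2}\leq C-(\tfrac{\partial}{\partial t}-\Delta_{B})tr_{\omega}\widehat{\omega}_{\infty}$; integrating over $M$ (the $\Delta_{B}$-term integrates to zero against $\omega^{n}\wedge\eta_{0}$), using again $\tfrac{\partial}{\partial t}(\omega^{n}\wedge\eta_{0})=(-R^{T}-n)\,\omega^{n}\wedge\eta_{0}$ with $|R^{T}|\leq C$, and integrating over $[t,t+1]$ bounds it by $C$ plus the boundary values of the uniformly bounded quantity $\int_{M}tr_{\omega}\widehat{\omega}_{\infty}\,\omega^{n}\wedge\eta_{0}$. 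Feeding this back into the differential inequality for $I$ and integrating over $[t,t+1]$, the uniform bound $0\leq I\leq C$ gives $\int_{t}^{t+1}\int_{M}|\bigtriangledown^{T}\Delta^{T}u|^{2}\,\omega^{n}\wedge\eta_{0}\,ds\leq C$, and the two reductions of the first paragraph then deliver (\ref{46}).

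The hard part will be the bookkeeping in the differential inequality for $I(t)$: one must track all curvature error terms produced by commuting $\tfrac{\partial}{\partial t}$ past $\Delta_{B}$ and by the evolving volume form, and recognise that the genuinely dangerous contributions involve $\mathrm{Ric}^{T}$ (only $L^{2}$ in space) and $tr_{\omega}\widehat{\omega}_{\infty}$ --- neither absorbable pointwise, but both controllable after integration in time, the former by Lemma \ref{L4.1} and Cauchy--Schwarz, the latter by the auxiliary bound just described. All the integration-by-parts identities used are the transverse-K\"{a}hler analogues of the standard ones on a closed K\"{a}hler manifold and remain valid here because every integrand is basic.
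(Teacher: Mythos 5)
Your proposal follows essentially the same route as the paper: reduce via Lemma \ref{L4.2} and the uniform bounds (\ref{20}), (\ref{43}) to a space--time $L^{2}$ bound on $\bigtriangledown^{T}\Delta_{B}u$, obtain that by differentiating $\int_{M}(\Delta_{B}u)^{2}\,\omega^{n}\wedge\eta_{0}$ using the evolution equation (\ref{22b}) and integration by parts, and control the resulting $|\bigtriangledown^{T}tr_{\omega}\widehat{\omega}_{\infty}|^{2}$ term from the parabolic Schwarz lemma. The only (harmless) deviation is that you bound $\int\!\!\int|\bigtriangledown^{T}tr_{\omega}\widehat{\omega}_{\infty}|^{2}$ in time average, whereas the paper extracts a uniform spatial $L^{2}$ bound at each time by writing $-\Delta_{B}tr_{\omega}\widehat{\omega}_{\infty}=(\frac{\partial}{\partial t}-\Delta_{B})tr_{\omega}\widehat{\omega}_{\infty}-\frac{\partial}{\partial t}tr_{\omega}\widehat{\omega}_{\infty}$ and integrating the Laplacian away; either suffices.
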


\begin{proof}
By the previous Lemmas \ref{L4.1} and \ref{L4.2}, it sufficient to prove a
uniform $L^{2}$ bound $\bigtriangledown^{T}\Delta_{B}u.$ Since
\[%
\begin{array}
[c]{c}%
(\frac{\partial}{\partial t}-\Delta_{B})\Delta_{B}u=\Delta_{B}u+\left\langle
\mathrm{Ric}^{T},\partial_{B}\overline{\partial}_{B}u\right\rangle _{\omega
}+\Delta_{B}tr_{\omega}\widehat{\omega}_{\infty},
\end{array}
\]
thus%
\[%
\begin{array}
[c]{c}%
\frac{1}{2}(\frac{\partial}{\partial t}-\Delta_{B})(\Delta_{B}u)^{2}%
=(\Delta_{B}u)^{2}-|\bigtriangledown^{T}\Delta_{B}u|^{2}+\Delta_{B}%
u[\left\langle \mathrm{Ric}^{T},\partial_{B}\overline{\partial}_{B}%
u\right\rangle _{\omega}+\Delta_{B}tr_{\omega}\widehat{\omega}_{\infty}].
\end{array}
\]
Integrating over the manifold gives%
\[%
\begin{array}
[c]{ll}
& \int_{M}|\bigtriangledown^{T}\Delta_{B}u|^{2}\omega^{n}\wedge\eta_{0}\\
\leq & \int_{M}[(\Delta_{B}u)^{2}+|\Delta_{B}u||\mathrm{Ric}^{T}%
||\bigtriangledown^{T}\overline{\bigtriangledown}^{T}u|-2\operatorname{Re}%
(\bigtriangledown^{T}\Delta_{B}u\cdot\overline{\bigtriangledown}^{T}%
tr_{\omega}\widehat{\omega}_{\infty})]\omega^{n}\wedge\eta_{0}\\
& -\frac{1}{2}\int_{M}\frac{\partial}{\partial t}(\Delta_{B}u)^{2}\omega
^{n}\wedge\eta_{0}\\
\leq & \int_{M}[\frac{1}{2}|\bigtriangledown^{T}\Delta_{B}u|^{2}+(\Delta
_{B}u)^{2}(1+|\mathrm{Ric}^{T}|^{2})+|\bigtriangledown^{T}\overline
{\bigtriangledown}^{T}u|^{2}]\omega^{n}\wedge\eta_{0}\\
& +\int_{M}[2|\bigtriangledown^{T}tr_{\omega}\widehat{\omega}_{\infty}%
|^{2}-\frac{1}{2}(\Delta_{B}u)^{2}(R^{T}+n)]\omega^{n}\wedge\eta_{0}-\frac
{1}{2}\frac{d}{dt}\int_{M}(\Delta_{B}u)^{2}\omega^{n}\wedge\eta_{0}.
\end{array}
\]
Applying the uniform bound of $\Delta_{B}u$ and Lemma \ref{L4.1}, we then
obtain%
\[%
\begin{array}
[c]{lll}%
\int_{M}|\bigtriangledown^{T}\Delta_{B}u|^{2}\omega^{n}\wedge\eta_{0} & \leq &
C\int_{M}[1+|\bigtriangledown^{T}tr_{\omega}\widehat{\omega}_{\infty}%
|^{2}]\omega^{n}\wedge\eta_{0}\\
&  & -\frac{d}{dt}\int_{M}(\Delta_{B}u)^{2}\omega^{n}\wedge\eta_{0}.
\end{array}
\]
Integrating over the time interval $[t,t+1],$ we have%
\begin{equation}%
\begin{array}
[c]{l}%
\int_{t}^{t+1}\int_{M}|\bigtriangledown^{T}\Delta_{B}u|^{2}\omega(s)^{n}%
\wedge\eta_{0}ds\\
\leq C\int_{t}^{t+1}\int_{M}(1+|\bigtriangledown^{T}tr_{\omega}\widehat{\omega
}_{\infty}|^{2})\omega(s)^{n}\wedge\eta_{0}ds+C,
\end{array}
\label{47}%
\end{equation}
for all $t\geq0.$ The integral of the term $|\bigtriangledown^{T}tr_{\omega
}\widehat{\omega}_{\infty}|$ can be estimated by the Schwarz lemma. From the
evolution equation of $tr_{\omega}\widehat{\omega}_{\infty}$%
\[%
\begin{array}
[c]{lll}%
-\Delta_{B}tr_{\omega}\widehat{\omega}_{\infty} & = & (\frac{\partial
}{\partial t}-\Delta_{B})tr_{\omega}\widehat{\omega}_{\infty}-\frac{\partial
}{\partial t}tr_{\omega}\widehat{\omega}_{\infty}\\
& \leq & C(tr_{\omega}\widehat{\omega}_{\infty})^{2}-\left\langle
\mathrm{Ric}^{T},\widehat{\omega}_{\infty}\right\rangle _{\omega
}-|\bigtriangledown^{T}tr_{\omega}\widehat{\omega}_{\infty}|\\
& \leq & C(tr_{\omega}\widehat{\omega}_{\infty})^{2}+|\mathrm{Ric}%
^{T}||tr_{\omega}\widehat{\omega}_{\infty}|-|\bigtriangledown^{T}tr_{\omega
}\widehat{\omega}_{\infty}|,
\end{array}
\]
where $C$ is a universal constant given by the upper bound of the bisection
curvature of $\omega_{FS}$ on $\mathbb{C}\mathbb{P}^{n}.$ Because
$0<tr_{\omega}\widehat{\omega}_{\infty}\leq C$ under the flow, we have%
\[%
\begin{array}
[c]{l}%
|\bigtriangledown^{T}tr_{\omega}\widehat{\omega}_{\infty}|\leq\Delta
_{B}tr_{\omega}\widehat{\omega}_{\infty}+C(|\mathrm{Ric}^{T}|+1)
\end{array}
\]
and thus%
\[%
\begin{array}
[c]{c}%
\int_{M}|\bigtriangledown^{T}tr_{\omega}\widehat{\omega}_{\infty}|^{2}%
\omega(t)^{n}\wedge\eta_{0}\leq C\int_{M}(|\mathrm{Ric}^{T}|+1)\omega
(t)^{n}\wedge\eta_{0}\leq C
\end{array}
\]
uniformly. Substituting into (\ref{47}) we obtain the desired estimate.
\end{proof}

In order to prove (\ref{40}) we use the $L^{2}$ estimate to the traceless
transverse Ricci curvature as following.

\begin{lemma}
Under the Sasaki-Ricci flow,%
\begin{equation}%
\begin{array}
[c]{c}%
\int_{t}^{t+1}\int_{M}|\mathrm{Ric}_{\omega(s)}^{T}+\omega(s)|^{2}%
\omega(s)^{n}\wedge\eta_{0}ds\rightarrow0\text{ \textrm{as} }t\rightarrow
\infty.
\end{array}
\label{48}%
\end{equation}

\end{lemma}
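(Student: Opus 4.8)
The plan is to localise: split $M$ into a compact region on which $\omega(t)$ is already known to converge and a thin neighbourhood of $\mathrm{Null}(-c_1^B(M))$ on which only integral control is available, estimate the two pieces separately, and let the neighbourhood shrink. The point is that the $L^4$-bound from Theorem~\ref{T51} is exactly what is needed for the second piece.

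Two inputs are available. First, from the proof of Theorem~\ref{T32} (which applies since $K_M^T$ is nef and big), the solution $\omega(t)$ of the normalized flow (\ref{3}) converges in $C^\infty_{\mathrm{loc}}\big(M\setminus\mathrm{Null}(-c_1^B(M))\big)$ to a transverse K\"ahler metric $\omega_\infty$ with $\mathrm{Ric}^T_{\omega_\infty}+\omega_\infty=0$; since $\omega_\infty$ is nondegenerate there and the transverse Ricci tensor and all metric contractions depend continuously on finitely many derivatives of the metric, this yields, for every compact $K\subset M\setminus\mathrm{Null}(-c_1^B(M))$,
\[
\big|\mathrm{Ric}^T_{\omega(s)}+\omega(s)\big|^2_{\omega(s)}\,\omega(s)^n\wedge\eta_0\longrightarrow 0\qquad\text{uniformly on }K\text{ as }s\to\infty .
\]
Second, from (\ref{6}) together with (\ref{3}) the quantity $u:=\varphi+\varphi'=\log\!\big(\omega(t)^n\wedge\eta_0/(\Omega\wedge\eta_0)\big)$ satisfies $|u|\le C$, hence $C^{-1}\,\Omega\wedge\eta_0\le \omega(t)^n\wedge\eta_0\le C\,\Omega\wedge\eta_0$ for all $t$, where $\Omega\wedge\eta_0$ is a fixed smooth positive volume form on $M$; moreover $\mathrm{Null}(-c_1^B(M))$ is closed and has $(\Omega\wedge\eta_0)$-measure zero by Proposition~\ref{P31}.

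Now fix $\varepsilon>0$. Choose a compact $K\subset M\setminus\mathrm{Null}(-c_1^B(M))$ with $\int_{M\setminus K}\Omega\wedge\eta_0<\delta$, where $\delta>0$ will be fixed below. On $M\setminus K$, using $|\mathrm{Ric}^T+\omega|\le|\mathrm{Ric}^T|+\sqrt n$, Cauchy--Schwarz (first on $M\setminus K$ in the measure $\omega(s)^n\wedge\eta_0$, then in $s$ over $[t,t+1]$), the bound $\int_{M\setminus K}\omega(s)^n\wedge\eta_0\le C\delta$, and the $L^4$-estimate (\ref{41}),
\begin{align*}
\int_t^{t+1}\!\!\int_{M\setminus K}\big|\mathrm{Ric}^T_{\omega(s)}+\omega(s)\big|^2\,\omega(s)^n\wedge\eta_0\,ds
&\le (C\delta)^{1/2}\Big(\int_t^{t+1}\!\!\int_M\big|\mathrm{Ric}^T_{\omega(s)}+\omega(s)\big|^4\,\omega(s)^n\wedge\eta_0\,ds\Big)^{1/2}\\
&\le C\,\delta^{1/2},
\end{align*}
uniformly in $t$; choosing $\delta$ small makes the left side $<\varepsilon/2$ for all $t$. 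On $K$, the displayed uniform convergence (and the finite $\omega(s)^n\wedge\eta_0$-mass of $K$) gives a $T_\varepsilon$ with $\int_K|\mathrm{Ric}^T_{\omega(s)}+\omega(s)|^2\,\omega(s)^n\wedge\eta_0<\varepsilon/2$ for all $s\ge T_\varepsilon$, hence $\int_t^{t+1}\!\int_K(\cdots)\,ds<\varepsilon/2$ for $t\ge T_\varepsilon$. Adding the two contributions establishes (\ref{48}).

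The only real obstacle is the behaviour near $\mathrm{Null}(-c_1^B(M))$, where $\omega(t)$ degenerates and the convergence of Theorem~\ref{T32} is unavailable: a uniform $L^2$-bound for $\mathrm{Ric}^T+\omega$ alone (which Lemma~\ref{L4.1} already provides) does not rule out a non-vanishing contribution there, for lack of uniform integrability. It is precisely the higher integrability $L^4$ of Theorem~\ref{T51}, combined with the uniform two-sided comparison between $\omega(t)^n\wedge\eta_0$ and the fixed smooth volume $\Omega\wedge\eta_0$, that makes this tail uniformly small in $t$; everything else is a soft cut-off argument.
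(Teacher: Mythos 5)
Your argument is correct, and there is no circularity: the $L^4$-estimate (\ref{41}) is established before this lemma and is never deduced from it. But your route is genuinely different from the paper's. The paper does not localize at all: it integrates the evolution equation $(\frac{\partial}{\partial t}-\Delta_{B})R^{T}=|\mathrm{Ric}^{T}+\omega|^{2}-(R^{T}+n)$ over $M$ (the Laplacian term drops out), uses the maximum principle to get $\inf R^{T}+n\geq-Ce^{-t}$, the uniform scalar curvature bound (\ref{aaa}), and the purely cohomological identity $\int_{M}(R^{T}+n)\,\omega^{n}\wedge\eta_{0}=ne^{-t}\int_{M}(\omega_{0}-\widehat{\omega}_{\infty})\wedge\widehat{\omega}^{n-1}\wedge\eta_{0}=O(e^{-t})$, to conclude that the full space-time integral $\int_{0}^{\infty}\int_{M}|\mathrm{Ric}^{T}+\omega|^{2}\,\omega^{n}\wedge\eta_{0}\,dt$ is finite --- a strictly stronger statement than (\ref{48}), of which (\ref{48}) is the tail. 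That proof needs neither the $L^{4}$-bound of Theorem \ref{T51} nor the interior convergence of Theorem \ref{T32}. Your proof, by contrast, spends those two heavier inputs to run a soft cut-off: uniform smooth convergence to the $\eta$-Einstein metric on a compact set away from $\mathrm{Null}(-c_{1}^{B}(M))$, plus Cauchy--Schwarz against (\ref{41}) on a small-volume collar. What you gain is robustness and localization (your argument shows directly that any failure of $L^{2}$-decay would have to concentrate on the null locus, and the same scheme yields the $L^{p}$-decay for all $p<4$ in one stroke, which the paper instead obtains afterwards by H\"older); what you lose is the summability $\int_{0}^{\infty}\int_{M}|\mathrm{Ric}^{T}+\omega|^{2}<\infty$ and the economy of the scalar-curvature computation. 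Two small points you are implicitly using and should flag: that $\mathrm{Null}(-c_{1}^{B}(M))$ is closed (so that its complement can be exhausted by compacta --- this comes from its identification with the non-K\"ahler locus, a proper analytic subvariety, which the paper's framework assumes throughout), and that it has measure zero, which indeed follows from Proposition \ref{P31} since the $L^{1}$-function $\phi$ equals $-\infty$ exactly there.
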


\begin{proof}
Recall the evolution of the transverse scalar curvature $R^{T}=tr_{\omega
}\mathrm{Ric}^{T}$%
\[%
\begin{array}
[c]{c}%
(\frac{\partial}{\partial t}-\Delta_{B})R^{T}=|\mathrm{Ric}^{T}|^{2}%
+R^{T}=|\mathrm{Ric}^{T}+\omega|^{2}-(R^{T}+n).
\end{array}
\]
The maximum principle shows that $\frac{d}{dt}\inf R^{T}\geq-(\inf R^{T}+n), $
which implies%
\begin{equation}%
\begin{array}
[c]{c}%
\inf R^{T}+n\geq e^{-t}\min(\inf R^{T}(0)+n,0)\geq-Ce^{-t}%
\end{array}
\label{49}%
\end{equation}
for some positive constant $C=C(\omega_{0}).$ Then
\[%
\begin{array}
[c]{ll}
& \int_{M}|\mathrm{Ric}_{\omega}^{T}+\omega|^{2}\omega^{n}\wedge\eta_{0}\\
= & \int_{M}(\frac{\partial}{\partial t}R^{T}+R^{T}+n)\omega^{n}\wedge\eta
_{0}\\
= & \frac{d}{dt}\int_{M}R^{T}\omega^{n}\wedge\eta_{0}+\int_{M}(R^{T}%
+n)(R^{T}+1)\omega^{n}\wedge\eta_{0}\\
= & \frac{d}{dt}\int_{M}R^{T}\omega^{n}\wedge\eta_{0}+\int_{M}(R^{T}%
+n+Ce^{-t})(R^{T}+1)\omega^{n}\wedge\eta_{0}\\
& -Ce^{-t}\int_{M}(R^{T}+1)\omega^{n}\wedge\eta_{0}\\
\leq & \frac{d}{dt}\int_{M}R^{T}\omega^{n}\wedge\eta_{0}+C\int_{M}%
(R^{T}+n)\omega^{n}\wedge\eta_{0}+Ce^{-t}%
\end{array}
\]
where we used the uniform bound of transverse scalar curvature and volume form
$\omega(t)^{n}\wedge\eta_{0}.$ The integration of $R^{T}+n$ becomes%
\[%
\begin{array}
[c]{lll}%
\int_{M}(R^{T}+n)\omega^{n}\wedge\eta_{0} & = & n\int_{M}(\mathrm{Ric}%
^{T}+\omega)\wedge\omega^{n-1}\wedge\eta_{0}=n\int_{M}(-\widehat{\omega
}_{\infty}+\widehat{\omega})\wedge\widehat{\omega}^{n-1}\wedge\eta_{0}\\
& = & ne^{-t}\int_{M}(\omega_{0}-\widehat{\omega}_{\infty})\wedge
\widehat{\omega}^{n-1}\wedge\eta_{0}\leq Ce^{-t}.
\end{array}
\]
Then
\[%
\begin{array}
[c]{lll}%
\int_{0}^{\infty}\int_{M}|\mathrm{Ric}_{\omega}^{T}+\omega|^{2}\omega
(t)^{n}\wedge\eta_{0}dt & \leq & \underset{t\rightarrow\infty}{\lim}\int%
_{M}R^{T}(t)\omega(t)^{n}\wedge\eta_{0}-\int_{M}R^{T}(0)\omega_{0}^{n}%
\wedge\eta_{0}+C\\
& \leq & C.
\end{array}
\]
This estimate implies the lemma.
\end{proof}

The estimate (\ref{40}) when $2\leq p<4$ then is a direct consequence of the
H\"{o}lder inequality
\[%
\begin{array}
[c]{c}%
\int_{t}^{t+1}\int_{M}|\mathrm{Ric}_{\omega}^{T}+\omega|^{p}\leq\left(
\int_{t}^{t+1}\int_{M}|\mathrm{Ric}_{\omega}^{T}+\omega|^{4}\right)
^{\frac{p-2}{2}}\left(  \int_{t}^{t+1}\int_{M}|\mathrm{Ric}_{\omega}%
^{T}+\omega|^{2}\right)  ^{\frac{4-p}{2}}.
\end{array}
\]
When $0<p<2$ the estimate (\ref{40}) is obvious.

\section{Cheeger-Gromov Convergence}

Let $(M,\eta,\xi,\Phi,g)$ be a compact quasi-regular Sasakian $(2n+1)$%
-manifold and its leave space $Z$ of the characteristic foliation be
well-formed which means its orbifold singular locus and algebro-geometric
singular locus coincide. In this section if we assume that $K_{M}^{T}$ is nef
and big, we will show that the solution of Sasaki-Ricci flow (\ref{3})
converge in the Gromov-Hausdorff topology to an $\eta$-Einstein metric on the
transverse canonical model without any curvature assumption in the case of the
dimension less than or equal to $7$.

Note that from the definition of quasi-regular Sasakian manifolds, there is a
natural projection
\[
\Pi:(C(M),\overline{g},J,\overline{\omega})\rightarrow(Z,h,\omega_{h})
\]
satisfying the orbifold Riemannian submersion $\pi:(M,g,\omega)\rightarrow
(Z,h,\omega_{h})$ with $\omega=\pi^{\ast}(\omega_{h})$ such that
\[
\Pi|_{(M,g,\omega)}=\pi
\]
and the volume form of the K\"{a}hler cone metric on the cone $C(M)$
\begin{equation}
\overline{\omega}^{n+1}=r^{2n+1}(\Pi^{\ast}\omega_{h})^{n}\wedge
dr\wedge\overline{\eta},\label{2022-d}%
\end{equation}
and the volume form of the Sasaki metric on $M$%
\begin{equation}%
\begin{array}
[c]{c}%
i_{\frac{\partial}{\partial r}}\overline{\omega}^{n+1}=(\Pi^{\ast}\omega
_{h})^{n}\wedge\eta=\omega^{n}\wedge\eta.
\end{array}
\label{2022-c}%
\end{equation}
Furthermore, by adapting notions from Definition \ref{D2021} and \cite{cz},
$G_{_{i}}$ is the local uniformizing finite group acting on a smooth complex
space $\widetilde{U_{i}}$ such that the local uniformizing group injects into
$U(1)$ and the map%
\[%
\begin{array}
[c]{c}%
\varphi_{i}:U(1)\times\widetilde{U_{i}}\rightarrow U_{i}%
\end{array}
\]
is exactly $|G_{_{i}}|$-to-one on the complement of the orbifold locus. Then
we can have the following computation
\begin{equation}%
\begin{array}
[c]{ccl}%
\int_{Z}|\mathrm{Ric}_{\omega_{h}(t)}|^{p}\omega_{h}(t)^{n} & = & \sum
_{i}\frac{1}{|G_{_{i}}|}\int_{\widetilde{U_{i}}}\varphi_{i}|\mathrm{Ric}%
_{\omega_{h}(t)}|^{p}\omega_{h}(t)^{n}\\
& = & \sum_{i}\frac{1}{|G_{_{i}}|}\int_{U(1)\times\widetilde{U_{i}}}\pi^{\ast
}\varphi_{i}|\mathrm{Ric}_{\omega(t)}^{T}|^{p}\pi^{\ast}\omega_{h}%
(t)^{n}\wedge\eta\\
& = & \sum_{i}\int_{U_{i}}\pi^{\ast}\varphi_{i}|\mathrm{Ric}_{\omega(t)}%
^{T}|^{p}\pi^{\ast}\omega_{h}(t)^{n}\wedge\eta\\
& = & \int_{M}|\mathrm{Ric}_{\omega(t)}^{T}|^{p}i_{\frac{\partial}{\partial
r}}\overline{\omega}^{n+1}\\
& = & \int_{M}|\mathrm{Ric}_{\omega(t)}^{T}|^{p}\omega(t)^{n}\wedge\eta.
\end{array}
\label{2022-e}%
\end{equation}

With (\ref{2022-e}) in mind, we will apply our previous results plus
Cheeger-Colding-Tian structure theory for K\"{a}hler orbifolds (\cite{cct},
\cite{tz1} and \cite[Theorem2.3]{tz2}) to study the structure of desired limit
space. Since $(M,\eta,\xi,\Phi,g)$ is a compact quasi-regular Sasakian
manifold, by the%
 first structure theorem on Sasakian manifolds, %
$M$\ is a principal $S^{1}$-orbibundle ($V$-bundle) over $Z$ which is also a
$Q$-factorial, polarized, normal projective orbifold such that there is an
orbifold Riemannian submersion$\ \pi:(M,g,\omega)\rightarrow(Z,h,\omega_{h})$
with%
\[%
\begin{array}
[c]{c}%
g=g^{T}+\eta\otimes\eta
\end{array}
\]
and
\[%
\begin{array}
[c]{c}%
g^{T}=\pi^{\ast}(h),\text{\ }\frac{1}{2}d\eta=\pi^{\ast}(\omega_{h}).
\end{array}
\]
The orbit $\xi_{x}$ is compact for any $x\in M,$ we then define the transverse
distance function as
\[%
\begin{array}
[c]{c}%
d^{T}(x,y)\triangleq d_{g}(\xi_{x},\xi_{y}),
\end{array}
\]
where $d$ is the distance function defined by the Sasaki metric $g.$ Then
\[%
\begin{array}
[c]{c}%
d^{T}(x,y)=d_{h}(\pi(x),\pi(y)).
\end{array}
\]
We define a transverse ball $B_{\xi,g}(x,r)$ as follows:
\[%
\begin{array}
[c]{c}%
B_{\xi,g}(x,r)=\left\{  y:d^{T}(x,y)<r\right\}  =\left\{  y:d_{h}(\pi
(x),\pi(y))<r\right\}  .
\end{array}
\]
Note that when $r$ small enough, $B_{\xi,g}(x,r)$ is a trivial $S^{1}$-bundle
over the geodesic ball $B_{h}(\pi(x),r)$.

Based on Perelman's non-collapsing theorem for a transverse ball along the
unnormalizing Sasaki-Ricci flow, it follows that

\begin{lemma}
\label{L61} (\cite[Proposition 7.2]{co1}, \cite[Lemma 6.2]{he}, \cite[Lemma
3.14]{tz2}) Let $(M^{2n+1},\xi,g_{0})$ be a compact Sasakian manifold and let
$g^{T}(t)$ be the solution of the unnormalizing Sasaki-Ricci flow with the
initial transverse metric $g_{0}^{T}$. Then there exists a positive constant
$C$ such that for every $x\in M$, if $|R^{T}|\leq r^{-2}$ on $B_{\xi
,g(t)}(x,r)$ for $r\in(0,r_{0}]$, where $r_{0}$ is a fixed sufficiently small
positive number, then%
\[%
\begin{array}
[c]{c}%
\mathrm{Vol}(B_{\xi,g(t)}(x,r))\geq Cr^{2n}.
\end{array}
\]

\end{lemma}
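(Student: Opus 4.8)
The plan is to reduce the statement to Perelman's non-collapsing theorem for the (transverse) Ricci flow by passing to the leaf space and using the $S^1$-orbibundle structure. First I would recall that, because the Sasaki-Ricci flow fixes the Reeb field $\xi$ and only modifies the transverse K\"{a}hler structure, the quotient $Z = M/U(1)$ carries an evolving orbifold metric $h(t)$ with $g^T(t) = \pi^*(h(t))$, and the transverse Ricci curvature descends: $\mathrm{Ric}^T_{\omega(t)} = \pi^*(\mathrm{Ric}_{\omega_h(t)})$. Under the unnormalized Sasaki-Ricci flow \eqref{1}, $h(t)$ evolves by the orbifold Ricci flow $\frac{\partial}{\partial t} h(t) = -2\,\mathrm{Ric}_{h(t)}$ (up to the usual normalization constant in the K\"{a}hler setting), so the hypothesis $|R^T| \le r^{-2}$ on the transverse ball $B_{\xi, g(t)}(x,r)$ is exactly $|R_{h(t)}| \le r^{-2}$ on the orbifold geodesic ball $B_{h(t)}(\pi(x), r)$, since scalar curvatures also satisfy $R^T = \pi^* R_h$ by the $O'$Neill submersion formulas for the Riemannian submersion with totally geodesic one-dimensional fibres.

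Next I would invoke Perelman's no-local-collapsing theorem in its orbifold form (this is the content of the cited \cite[Proposition 7.2]{co1}, \cite[Lemma 6.2]{he}, \cite[Lemma 3.14]{tz2}): for the Ricci flow on a compact orbifold, there is $\kappa > 0$, depending only on the initial metric $h_0$ and a finite time bound, such that whenever $|R_{h(t)}| \le r^{-2}$ on $B_{h(t)}(p, r)$ with $r \le r_0$, one has $\mathrm{Vol}_{h(t)}(B_{h(t)}(p,r)) \ge \kappa\, r^{2n}$. The proof of this is the standard monotonicity argument for Perelman's $\mathcal{W}$-entropy (or the local $\nu$-functional / reduced volume), which goes through verbatim on orbifolds: the entropy functional is defined by integrating over the orbifold, all integration-by-parts identities hold since the orbifold singular set has codimension $\ge 2$ and measure zero, and the monotonicity under the coupled flow is a pointwise computation. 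For the quasi-regular Sasakian case one may alternatively work directly upstairs on $M$ with the transverse $\mathcal{W}$-functional adapted to the foliation, using that all integrands are basic, as in the Sasaki-Ricci flow literature.

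Finally I would transfer the volume lower bound back to $M$. By \eqref{2022-c}, the Sasaki volume form is $\omega^n \wedge \eta = (\Pi^*\omega_h)^n \wedge \eta$, so for $r$ small enough that $B_{\xi, g(t)}(x,r)$ is a trivial $S^1$-bundle over $B_{h(t)}(\pi(x), r)$ (as noted in the text just before the lemma), Fubini gives
\[
\mathrm{Vol}(B_{\xi, g(t)}(x,r)) = \ell(t)\cdot \mathrm{Vol}_{h(t)}(B_{h(t)}(\pi(x), r)),
\]
where $\ell(t)$ is the length of the generic $S^1$-fibre; since the Reeb flow is an isometry of $g(t)$ of unit speed and $\xi$ is unchanged, $\ell(t)$ is bounded below by a uniform positive constant (the orbifold covers being finite, the fibre lengths differ from the generic one by bounded factors). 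Combining with the orbifold non-collapsing estimate yields $\mathrm{Vol}(B_{\xi, g(t)}(x,r)) \ge C r^{2n}$ with $C$ depending only on $g_0$ and $r_0$, as claimed.

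The main obstacle is the orbifold non-collapsing input itself: one must be careful that Perelman's entropy monotonicity and the resulting local volume bound survive in the presence of orbifold singularities along the fibres (equivalently, of foliation cyclic quotient singularities in $M$). Since the singular locus has real codimension at least two, the cutoff-function arguments and integrations by parts in Perelman's proof are unaffected, so this is technical rather than deep — but it is where the hypothesis that $Z$ is well-formed and has only mild (Kawamata log terminal, finite cyclic quotient) singularities is genuinely used.
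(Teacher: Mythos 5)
The paper does not actually prove this lemma---it is quoted from Collins \cite[Proposition 7.2]{co1}, He \cite[Lemma 6.2]{he} and Tian--Zhang \cite[Lemma 3.14]{tz2}, where the transverse non-collapsing is obtained by running Perelman's monotonicity argument directly on $M$ with a transverse $\mathcal{W}$-functional (all integrands being basic). Your route---push the flow down to the orbifold Ricci flow on $Z=M/U(1)$, apply Perelman's no-local-collapsing there, and lift the volume bound back through the $S^1$-orbibundle---is a legitimate alternative in the quasi-regular setting, and your bookkeeping is correct: $R^T=\pi^*R_h$ by definition of the transverse curvature, the Reeb orbits and their lengths are unchanged under the type II deformation (since $\eta_t(\xi)=1$ and the orbits are fixed as point sets), so the Fubini factor $\ell(t)=\ell(0)$ is uniformly positive up to the finite isotropy orders, and the codimension-$\geq 2$ orbifold locus does not obstruct the entropy monotonicity.

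The one genuine gap is the time-uniformity of the constant. You invoke Perelman's theorem with a constant ``depending only on the initial metric and a finite time bound,'' but then conclude with a $C$ depending only on $g_0$ and $r_0$. In the setting where this lemma is used, $K_M^T$ is nef, so the unnormalized flow exists on $[0,\infty)$ and the estimate is applied along the normalized flow for a sequence $t_i\rightarrow\infty$ (it feeds the hypothesis (\ref{b}) of Theorem \ref{T61}); Perelman's no-local-collapsing as you state it degenerates as the final time goes to infinity. To close this you need either the monotonicity of the $\mathcal{W}$-functional adapted to the general-type normalization $\partial_t\omega=-\mathrm{Ric}^T-\omega$ (which yields a $\kappa$ uniform for all $t\geq 0$), or the direct argument of Tian--Zhang using the uniform bounds on $\varphi$, $\varphi'$ and $R^T$ already established in Sections 3--4. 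As written, your proof establishes the lemma only on compact time intervals, which is not enough for the application.
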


Moreover, based on the $L^{2}$-bound of Riemannian curvature and
K\"{a}hler-Einstein condition on $Z_{\infty}$, we can say more about the limit
singular space $Z_{\infty}$ and then $M_{\infty}$. More precisely, once we
obtain (\ref{2022-e}), Theorem \ref{T51} and Lemma \ref{L61}, it follows from
arguments of Petersen-Wei \cite{pw1}, \cite{pw2}, Cheeger-Colding-Tian
\cite{cct} and \cite[Theorem 2.37]{tz1} that we have the following structure
theorem of limit spaces $Z_{\infty}$ and $M_{\infty}$.

\begin{theorem}
\label{T61} Let $(M_{i},\eta_{i},\xi,\Phi_{i},g_{i},\omega_{i})$ be a sequence
of quasi-regular Sasakian $(2n+1)$-manifolds with Sasaki metrics $g_{i}%
=g_{i}^{T}+\eta_{i}\otimes\eta_{i}$ such that for basic potentials
$\varphi_{i}$
\[%
\begin{array}
[c]{c}%
\eta_{i}=\eta+d_{B}^{C}\varphi_{i}%
\end{array}
\]
and
\[%
\begin{array}
[c]{c}%
d\eta_{i}=d\eta+\sqrt{-1}\partial_{B}\overline{\partial}_{B}\varphi_{i}.
\end{array}
\]
We denote that $(Z_{i},h_{i},J_{i},\omega_{h_{i}})$ are a sequence of
well-formed normal projective orbifolds of complex dimension $n$ which are the
corresponding foliation leave space with respect to $(M_{i},\eta_{i},\xi
,\Phi_{i},g_{i},\omega_{i})$ such that
\[%
\begin{array}
[c]{c}%
\frac{1}{2}d\eta_{i}=\pi^{\ast}(h_{i})=\pi^{\ast}(\omega_{h_{i}}),\text{ }%
\pi^{\ast}(J_{i})=\Phi_{i}.
\end{array}
\]
Suppose that $(M_{i},\eta_{i},\xi,\Phi_{i},g_{i},\omega_{i})$ is a smooth
transverse minimal model of general type satisfying
\begin{equation}%
\begin{array}
[c]{c}%
\int_{M}|\mathrm{Ric}_{g_{i}^{T}}^{T}+\omega_{i}|^{p}\omega_{i}^{n}\wedge
\eta\rightarrow0,
\end{array}
\label{a}%
\end{equation}
and
\begin{equation}%
\begin{array}
[c]{c}%
\mathrm{Vol}(\emph{B}_{\xi,g_{i}^{T}}(x_{i},1))\geq\nu
\end{array}
\label{b}%
\end{equation}
for some $p>n,$ $\upsilon>0$. Then passing to a subsequence if necessary,
$(M_{i},\Phi_{i},g_{i},x_{i})$ converges in the Cheeger-Gromov sense to limit
length spaces $(M_{\infty},\Phi_{\infty},d_{\infty},x_{\infty})$ and then
$(Z_{i},h_{i},J_{i},\pi(x_{i}))$ converges to $(Z_{\infty},h_{\infty
},J_{\infty},\pi(x_{\infty}))$ such that

(1) for any $r>0$ and $p_{i}\in M_{i}$ with $p_{i}\rightarrow p_{\infty}\in
M_{\infty},$%
\[%
\begin{array}
[c]{c}%
\mathrm{Vol}(B_{h_{i}}(\pi(p_{i}),r))\rightarrow\mathcal{H}^{2n}(B_{h_{\infty
}}(\pi(p_{\infty}),r))
\end{array}
\]
and
\[%
\begin{array}
[c]{c}%
\mathrm{Vol}(B_{\xi,g_{i}^{T}}(p_{i},r))\rightarrow\mathcal{H}^{2n}%
(B_{\xi,g_{\infty}^{T}}(p_{\infty},r)).
\end{array}
\]
Moreover,
\[%
\begin{array}
[c]{c}%
\mathrm{Vol}(B(p_{i},r))\rightarrow\mathcal{H}^{2n+1}(B(p_{\infty},r)),
\end{array}
\]
where $\mathcal{H}^{m}$ denotes the $m$-dimensional Hausdorff measure.

(2) $M_{\infty}$ is a $S^{1}$-orbibundle over the normal projective variety
$Z_{\infty}:=M_{\infty}/\mathcal{F}_{\xi}.$

(3) $Z_{\infty}=\mathcal{R}\cup\mathcal{S}$ such that $\mathcal{S}$ is a
closed singular set of codimension $4$ and $\mathcal{R}$ consists of points
whose tangent cones are $\mathbb{R}^{2n}.$

(4) the convergence on the regular part of $M_{\infty}$ which is a $S^{1}%
$-principle bundle over $\mathcal{R}$ in the $(C^{\alpha}\cap L_{B}^{2,p}%
)$-topology for any $0<\alpha<2-\frac{2n}{p}$.
\end{theorem}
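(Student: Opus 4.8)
The plan is to push the entire problem down to the K\"ahler orbifold leaf spaces $Z_i$, run the structure theory of Cheeger--Colding--Tian (in the orbifold K\"ahler form of \cite{cct}, \cite[Theorem 2.37]{tz1}) together with the integral-Ricci compactness of Petersen--Wei \cite{pw1}, \cite{pw2} there, and then lift the resulting limit structure back up along the Reeb $S^{1}$-fibration. First I would translate hypotheses (\ref{a}) and (\ref{b}): by the first structure theorem (Proposition \ref{P21}) each $M_{i}$ is an $S^{1}$-orbibundle over a $Q$-factorial normal projective orbifold $Z_{i}$ with $g_{i}^{T}=\pi^{\ast}h_{i}$ and $\tfrac{1}{2}d\eta_{i}=\pi^{\ast}\omega_{h_{i}}$, so the computation (\ref{2022-e}) converts (\ref{a}) into $\int_{Z_{i}}|\mathrm{Ric}_{\omega_{h_{i}}}+\omega_{h_{i}}|^{p}\,\omega_{h_{i}}^{n}\to0$ and, applied to $|\mathrm{Ric}^{T}|^{2}$ and $|\mathrm{Rm}^{T}|^{2}$ together with Lemma \ref{L4.1}, a uniform $L^{2}$ bound on the full curvature tensor of $h_{i}$. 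Since $B_{\xi,g_{i}^{T}}(x_{i},r)$ is for small $r$ an $S^{1}$-bundle (of controlled fibre length) over $B_{h_{i}}(\pi(x_{i}),r)$, (\ref{b}) is exactly non-collapsing on $Z_{i}$; and because the Sasaki identities $\mathrm{Ric}^{T}=\mathrm{Ric}+2g^{T}$, $R^{T}=R+2n$ and the unit sectional curvature of every $2$-plane through $\xi$ control $\mathrm{Ric}_{g_{i}}$ in terms of $\mathrm{Ric}^{T}_{g_{i}^{T}}$, (\ref{a}) also gives an $L^{p}$ integral Ricci bound for the Sasaki metrics $g_{i}$ themselves and (\ref{b}) non-collapsing on $M_{i}$.

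Next I would run the structure machinery on the $Z_{i}$. The uniform $L^{p}$ ($p>n$) bound on $\mathrm{Ric}_{h_{i}}$ with non-collapsing yields, via Petersen--Wei \cite{pw1}, \cite{pw2}, Gromov--Hausdorff subconvergence $Z_{i}\to Z_{\infty}$, relative volume comparison and a segment inequality with small error, and $C^{\alpha}$ subconvergence of the metrics off a small exceptional set. Feeding in the uniform $L^{2}$ curvature bound and the almost-K\"ahler--Einstein condition $\mathrm{Ric}_{\omega_{h_{i}}}+\omega_{h_{i}}\to0$ in $L^{p}$, the orbifold K\"ahler form of the Cheeger--Colding--Tian $\varepsilon$-regularity theorem (\cite{cct}, \cite[Theorem 2.37]{tz1}, \cite[Theorem 2.3]{tz2}) gives the decomposition $Z_{\infty}=\mathcal{R}\cup\mathcal{S}$ with $\mathcal{S}$ closed of real codimension $\ge4$, tangent cone $\mathbb{R}^{2n}$ at every point of $\mathcal{R}$, a K\"ahler--Einstein metric $\mathrm{Ric}_{h_{\infty}}=-\omega_{h_{\infty}}$ on the smooth K\"ahler manifold $\mathcal{R}$, and convergence of the complex and metric structures on $\mathcal{R}$ in the $(C^{\alpha}\cap L^{2,p}_{B})$-topology for $0<\alpha<2-\tfrac{2n}{p}$. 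That $Z_{\infty}$ is a normal projective variety is obtained by passing the polarization by $K_{Z_{i}}$ and the associated pluricanonical maps $\Psi_{i}\colon M_{i}\to\mathbb{C}\mathrm{P}^{N}$ to the limit. This yields (3) and part of (2), (4).

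Then I would lift back to $M_{\infty}$. Since the Reeb field $\xi$ is fixed along the sequence, the free isometric $S^{1}$-actions on the $M_{i}$ pass to the Cheeger--Gromov limit $(M_{\infty},\Phi_{\infty},d_{\infty},x_{\infty})$, and $Z_{\infty}=M_{\infty}/\mathcal{F}_{\xi}$ is precisely the limit of the $Z_{i}$; over $\mathcal{R}$ the limit is a genuine principal $S^{1}$-bundle, the contact forms $\eta_{i}=\eta+d_{B}^{c}\varphi_{i}$ converging to a connection form $\eta_{\infty}$ with $d\eta_{\infty}=2\pi^{\ast}\omega_{h_{\infty}}$ and limit metric $g_{\infty}=\pi^{\ast}h_{\infty}+\eta_{\infty}\otimes\eta_{\infty}$, while over $\mathcal{S}$ one keeps the $S^{1}$-orbibundle structure inherited from the orbifold points of $Z_{\infty}$. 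This is (2), and (4) is the convergence just described on the $S^{1}$-bundle over $\mathcal{R}$. For (1), the continuity $\mathrm{Vol}(B_{h_{i}}(\pi(p_{i}),r))\to\mathcal{H}^{2n}(B_{h_{\infty}}(\pi(p_{\infty}),r))$ is the volume-continuity part of the Petersen--Wei/Cheeger--Colding theory under the integral Ricci bound; combining it with the fibration identities (\ref{2022-c}), (\ref{2022-e}) (controlled $S^{1}$-fibre length) gives the convergence of $\mathrm{Vol}(B_{\xi,g_{i}^{T}}(p_{i},r))$, and the same volume-continuity applied directly on $M_{i}\to M_{\infty}$ (legitimate by the $L^{p}$ Ricci bound on $g_{i}$) gives $\mathrm{Vol}(B(p_{i},r))\to\mathcal{H}^{2n+1}(B(p_{\infty},r))$.

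The hard part will be the second step: making the Cheeger--Colding--Tian $\varepsilon$-regularity and codimension-four estimate work on the \emph{orbifold} K\"ahler spaces $Z_{i}$, whose local uniformizing groups are controlled a priori only through the $L^{2}$ curvature bound and the almost-Einstein equation. One must show the orbifold loci do not degenerate in the limit, produce harmonic (or K\"ahler normal) coordinates on balls of definite size around regular points so that the $C^{\alpha}\cap L^{2,p}_{B}$ estimates close up, and identify the tangent cones along $\mathcal{R}$ with $\mathbb{R}^{2n}$. Once this orbifold version is in hand, the remainder is a faithful transcription of the K\"ahler-surface arguments of \cite{tz2} through the dictionary (\ref{2022-e}).
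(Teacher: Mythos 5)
Your proposal follows essentially the same route as the paper: reduce everything to the leaf spaces via the first structure theorem and the identity (\ref{2022-e}), apply the Petersen--Wei integral-Ricci compactness and the Cheeger--Colding--Tian orbifold structure theory there to get $Z_\infty=\mathcal{R}\cup\mathcal{S}$ with the stated regularity, and then lift back along the fixed Reeb $S^{1}$-fibration to obtain $M_\infty$, the volume continuity, and the $(C^{\alpha}\cap L^{2,p}_{B})$-convergence. The "hard part" you flag (the orbifold form of the $\varepsilon$-regularity and codimension-four estimate) is exactly the step the paper disposes of by citing \cite{cct}, \cite{co2} and \cite[Theorem 2.37]{tz1}, so your outline matches the paper's argument, with somewhat more detail on how hypotheses (\ref{a}) and (\ref{b}) are transferred to the leaf space.
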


\begin{proof}
Since $\xi$ is fixed and the metrics are under deformation generated by basic
potentials $\phi_{i}$ such that
\[%
\begin{array}
[c]{c}%
\eta_{i}=\eta+d_{B}^{C}\phi_{i}%
\end{array}
\]
and
\[%
\begin{array}
[c]{c}%
d\eta_{i}=d\eta+\sqrt{-1}\partial_{B}\overline{\partial}_{B}\phi_{i}.
\end{array}
\]
By t%
the first structure theorem on Sasakian manifolds, %
$M$ is a principal $S^{1}$-orbibundle ($V$-bundle) over $Z$ which is also a
$Q$-factorial, polarized, normal projective orbifold such that there is an
orbifold Riemannian submersion$\ \pi:(M,g_{i},\omega_{i})\rightarrow
(Z,h_{i},\omega_{h_{i}})$ with
\begin{equation}%
\begin{array}
[c]{c}%
g_{i}^{T}=\frac{1}{2}d\eta_{i}=\pi^{\ast}(h_{i})=\pi^{\ast}(\omega_{h_{i}})
\end{array}
\label{c}%
\end{equation}
and
\[%
\begin{array}
[c]{c}%
d_{i}^{T}(x,y)=d_{h_{i}}(\pi(x),\pi(y)).
\end{array}
\]
Then
\[%
\begin{array}
[c]{c}%
g_{i}=\pi^{\ast}h_{i}+\eta_{i}\otimes\eta_{i}.
\end{array}
\]
Hence by Cheeger-Colding-Tian structure theory for K\"{a}hler orbifolds
(\cite{cct}) that
\[%
\begin{array}
[c]{c}%
g_{i}^{T}=\pi^{\ast}h_{i}\rightarrow\pi^{\ast}h_{\infty}=g_{\infty}^{T}%
\end{array}
\]
and
\[%
\begin{array}
[c]{c}%
d_{i}^{T}=d_{g_{i}^{T}}^{T}\rightarrow d_{g_{\infty}^{T}}^{T}=d_{\infty}^{T}%
\end{array}
\]
as $i\rightarrow\infty.$ Thus
\[%
\begin{array}
[c]{c}%
\eta_{i}\rightarrow\eta_{\infty}%
\end{array}
\]
and
\[%
\begin{array}
[c]{c}%
g_{i}\rightarrow g_{\infty}=g_{\infty}^{T}+\eta_{\infty}\otimes\eta_{\infty}%
\end{array}
\]
as $i\rightarrow\infty.$ Moreover (\cite{cct}, \cite{co2}),
\[%
\begin{array}
[c]{c}%
g_{i}^{T}\overset{C^{\alpha}\cap L_{B}^{2,p}}{\rightarrow}g_{\infty}^{T}%
\end{array}
\]
such that
\[%
\begin{array}
[c]{c}%
h_{i}\overset{C^{\alpha}\cap L^{2,p}}{\rightarrow}h_{\infty}%
\end{array}
\]
with $g_{i}^{T}=\pi^{\ast}(h_{i}).$

Moreover,%
\[%
\begin{array}
[c]{c}%
B_{h_{i}}(\pi(x_{i}),r)\rightarrow B_{h_{\infty}}(\pi(x_{\infty}),r)
\end{array}
\]
and then
\[%
\begin{array}
[c]{c}%
B_{\xi,g_{i}^{T}}(x_{i},r)\rightarrow B_{\xi,g_{\infty}^{T}}(x_{\infty},r)
\end{array}
\]
as $i\rightarrow\infty.$ Furthermore,
\[%
\begin{array}
[c]{c}%
\mathrm{Vol}(B_{h_{i}}(\pi(x_{i}),r))\rightarrow\mathrm{Vol}(B_{h_{\infty}%
}(\pi(x_{\infty}),r))
\end{array}
\]
and then
\[%
\begin{array}
[c]{c}%
\mathrm{Vol}(B_{\xi,g_{i}^{T}}(x_{i},r))=\int_{B_{\xi,g_{i}^{T}}(x_{i}%
,r)}\omega_{i}^{n}\wedge\eta\rightarrow\int_{B_{\xi,g_{\infty}^{T}}(x_{\infty
},r)}\omega_{\infty}^{n}\wedge\eta=\mathcal{H}^{2n}(B_{\xi,g_{\infty}^{T}%
}(x_{\infty},r)).
\end{array}
\]
Finally
\[%
\begin{array}
[c]{c}%
\mathrm{Vol}(B(x_{i},r))=\int_{B(x_{i},r)}\omega_{i}^{n}\wedge\eta
\rightarrow\int_{B(x_{\infty},r)}\omega_{\infty}^{n}\wedge\eta=\mathcal{H}%
^{2n+1}(B(x_{\infty},r))
\end{array}
\]
as $i\rightarrow\infty.$

(3) and (4) will follow easily from (\ref{a}), (\ref{b}), (\ref{c}) and the
arguments as in \cite[Theorem 2.37]{tz1}.
\end{proof}

Let $(M,\eta,\xi,g)$ be a compact quasi-regular Sasakian $(2n+1)$-manifold and
be a principal $S^{1}$-orbibundle over $Z$ which is a well-formed
$Q$-factorial, polarized, normal projective orbifold.\ If $K_{M}^{T}$ is nef
and big (\cite{clw}), then it is semi-ample and then there exists a $S^{1}%
$-equivariant basic base point free holomorphic map
\[%
\begin{array}
[c]{c}%
\Psi:M\rightarrow(\mathbb{C}\mathrm{P}^{N},\omega_{FS})
\end{array}
\]
defined by the basic transverse holomorphic section $\{s_{0},s_{1}%
,\cdots,s_{N}\}$ of $H^{0}(M,(K_{M}^{T})^{m})$ with $N=\dim H^{0}(M,(K_{M}%
^{T})^{m})-1$ for a large positive integer $m$. Its image
\[%
\begin{array}
[c]{c}%
\Psi(M)=M_{\mathrm{can}}%
\end{array}
\]
is called the transverse canonical model of $M.$ Note that since $(M,\eta
,\xi,g)$ is a compact quasi-regular Sasakian manifold, $M$\ is a principal
$S^{1}$-orbibundle ($V$-bundle) over $Z$ which is also a $Q$-factorial,
polarized, normal projective orbifold such that there is an orbifold
Riemannian submersion$\ \pi:(M,g)\rightarrow(Z,h,\omega)$ with%
\[%
\begin{array}
[c]{c}%
c_{1}^{B}((K_{M}^{T})^{-1})=\pi^{\ast}c_{1}^{orb}(Z)=\pi^{\ast}c_{1}%
(K_{Z}^{-1}).
\end{array}
\]
Then there exists a base point free holomorphic map
\[%
\begin{array}
[c]{c}%
\widetilde{\Psi}:Z\rightarrow(\mathbb{C}\mathrm{P}^{N},\omega_{FS})
\end{array}
\]
defined by the holomorphic section $\{\widetilde{s}_{0},\widetilde{s}%
_{1},\cdots,\widetilde{s}_{N}\}$ of $H^{0}(Z,(K_{Z})^{m})$ and its image
\[%
\begin{array}
[c]{c}%
\widetilde{\Psi}(Z)=Z_{\mathrm{can}}%
\end{array}
\]
is the canonical model of $Z$ with the Kodaira dimension $\varkappa(Z)=n.$

Let $V\subset M$ be the basic exceptional locus of $\Psi$ with $\pi
(V)=E\subset Z$ which coincides with non-K\"{a}hler locus $\mathrm{Null}%
(-c_{1}^{B}(M))$ of the canonical class\ as in Proposition \ref{P31}
(\cite{ct}, \cite{tz3}). It follows from the first structure theorem
(Proposition \ref{P21}), Theorem \ref{T32}, Theorem \ref{T61}, and the
arguments as in \cite{tz2} that we have

\begin{theorem}
\label{T62}Let $(M,\eta,\xi,g)$ be a compact quasi-regular Sasakian $(2n+1)
$-manifold and be a principal $S^{1}$-orbibundle ($V$-bundle) over $Z$ which
is a well-formed $Q$-factorial, polarized, normal projective orbifold such
that there is an orbifold Riemannian submersion$\ \pi:(M,g,\omega
)\rightarrow(Z,h,\omega_{h}).$ Suppose that $(M,\eta,\xi,g)$ is a smooth
transverse minimal model of general type in the case of the dimension less
than or equal to $7$ and $\omega(t)$ be any solution to the Sasaki-Ricci flow
(\ref{3}). Then

\begin{enumerate}
\item $\omega(t)$ converges smoothly to a Sasaki $\eta$-Einstein metric
$\omega_{\infty}$ outside the exceptional locus $V;$

\item the metric completion of $(Z\backslash E,h_{\infty},\omega_{h_{\infty}%
})$ is homeomorphic to $Z_{\mathrm{can}}$ which is a normal projective variety
and then the metric completion of $(M\backslash V,g_{\infty},\omega_{\infty})$
is homeomorphic to $M_{\mathrm{can}}$, so it is compact.

\item $(Z_{\infty},d_{\infty}^{T})$ is isometric to the metric completion of
$(Z_{\infty}\backslash E,h_{\infty})$ and then $(M_{\infty},d_{\infty})$ is
isometric to the metric completion of $(M\backslash V,g_{\infty}%
,\omega_{\infty})$.
\end{enumerate}

As a consequence, for any sequence $t_{i}\rightarrow\infty,$ $(M,\omega
(t_{i}))$ converges along a subsequence in the Cheeger-Gromov sense to
\[%
\begin{array}
[c]{c}%
\mathrm{Ric}_{\omega_{\infty}}^{T}=-\omega_{\infty}%
\end{array}
\]
in the limit space $(M_{\infty},d_{\infty})$ which is the transverse canonical
model $M_{\mathrm{can}}$ of $M$.
\end{theorem}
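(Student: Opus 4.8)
The plan is to assemble the statement from the pieces already established, following the Sasaki analogue of Tian--Zhang \cite{tz2} and, near the exceptional locus, of Guo--Song--Weinkove \cite{gsw}. Part (1) is essentially contained in Theorem \ref{T32}: its proof is carried out for the normalized flow (\ref{3}) (equivalently (\ref{14a})) and produces a Sasaki $\eta$-Einstein metric $\omega_\infty$, $\mathrm{Ric}^T_{\omega_\infty}=-\omega_\infty$, on $M\setminus\mathrm{Null}(-c_1^B(M))$ to which $\omega(t)$ converges in $C^\infty_{\mathrm{loc}}$. By the Collins--Tosatti description of the non-K\"ahler locus (\cite{ct}, \cite{tz3}), lifted to $M$ through the orbifold Riemannian submersion $\pi$ of the first structure theorem (Proposition \ref{P21}), $\mathrm{Null}(-c_1^B(M))$ coincides with the basic exceptional locus $V$ of $\Psi$ and $E=\pi(V)$ is the exceptional locus of $\widetilde\Psi$; this gives (1), with $h_\infty$ the transverse K\"ahler--Einstein metric induced on $Z\setminus E$.

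Next I would extract a Cheeger--Gromov limit. Fix any sequence $t_i\to\infty$. Since $n\le 3$, choose $p$ with $n<p<4$; estimate (\ref{40}) of Theorem \ref{T51} gives $\int_M|\mathrm{Ric}^T_{\omega(t_i)}+\omega(t_i)|^p\,\omega(t_i)^n\wedge\eta_0\to 0$, which is hypothesis (\ref{a}) of Theorem \ref{T61}, while the uniform transverse scalar curvature bound (\ref{aaa}) together with the Perelman-type non-collapsing Lemma \ref{L61} supplies the uniform volume lower bound (\ref{b}). Theorem \ref{T61}, applied to the fixed manifold $M$ with the evolving metrics $\omega(t_i)$, then yields a subsequence along which $(M,\omega(t_i))$ converges in the Cheeger--Gromov sense to a limit length space $(M_\infty,\Phi_\infty,d_\infty,x_\infty)$, an $S^1$-orbibundle over a normal projective variety $Z_\infty=M_\infty/\mathcal F_\xi$ with $Z_\infty=\mathcal R\cup\mathcal S$, $\mathcal S$ closed of codimension $4$, the convergence being smooth (in $C^\alpha\cap L^{2,p}_B$) on the $S^1$-principal part over $\mathcal R$; in particular the transverse $\eta$-Einstein equation passes to the limit on $\mathcal R$, and when $n=2$ the set $\mathcal S$ is $0$-dimensional, so one may remove the isolated singularities as in \cite{a}, \cite{bkn}, \cite{t}.

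It remains to identify the limit with the transverse canonical model, which simultaneously gives (2) and (3). On the smooth locus the Cheeger--Gromov limit metric coincides with the $\eta$-Einstein metric of the first step, since both are the $C^\infty_{\mathrm{loc}}$ limit of $\omega(t)$ on $M\setminus V$; hence the $S^1$-principal part of $M_\infty$ over $\mathcal R$ is isometric to a dense open subset of $(M\setminus V,g_\infty,\omega_\infty)$, so $(M_\infty,d_\infty)$ is isometric to the metric completion of $(M\setminus V,g_\infty,\omega_\infty)$ and downstairs $(Z_\infty,d_\infty^T)$ is the metric completion of $(Z\setminus E,h_\infty)$; this is (3). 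For (2), I would argue as in \cite{tz2}: the $S^1$-equivariant semi-ample map $\Psi$ and the Kodaira-type potential $\phi$ from Proposition \ref{P31} --- basic, equal to $-\infty$ exactly on $V$, smooth off $V$ with $\chi+\sqrt{-1}\partial_B\overline\partial_B\phi\ge\varepsilon\omega_0$ --- yield a uniform diameter bound and control of the transverse distance to $V$, so that $V$ collapses onto $\mathcal S$ and $\Psi$ descends to a homeomorphism from the metric completion of $(M\setminus V,g_\infty,\omega_\infty)$ onto $M_{\mathrm{can}}=\Psi(M)$, with the parallel statement for $\widetilde\Psi:Z\to Z_{\mathrm{can}}$. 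Since $M_{\mathrm{can}}$ is independent of the subsequence, the Cheeger--Gromov limit is unique: for every $t_i\to\infty$, $(M,\omega(t_i))$ subconverges to the Sasaki $\eta$-Einstein metric $\mathrm{Ric}^T_{\omega_\infty}=-\omega_\infty$ on $(M_\infty,d_\infty)$, which is the transverse canonical model $M_{\mathrm{can}}$, the asserted consequence.

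The hard part will be the third step --- matching the Riemannian limit with the algebraic contraction near $V$. One must show that the exceptional locus does not degenerate uncontrollably but collapses precisely onto the codimension-$\ge 4$ singular set $\mathcal S$, using the partial $C^0$/diameter estimates together with the potential $\phi$, and that the resulting contraction is exactly the one induced by $\Psi$ (equivalently $\widetilde\Psi$ on $Z$). The only genuinely Sasakian point is bookkeeping: every quantity entering these estimates involves only the transverse K\"ahler structure $\omega(t)$ and basic sections, so after pushing forward through $\pi$ to the normal projective orbifold $Z$ the K\"ahler arguments of \cite{tz2} and \cite{gsw} apply essentially verbatim, and the conclusions are lifted back to $M$ through $\pi$ using $g=g^T+\eta\otimes\eta$ and $\omega^n\wedge\eta=i_{\frac{\partial}{\partial r}}\overline{\omega}^{n+1}$.
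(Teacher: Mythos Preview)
Your proposal is correct and follows essentially the same route as the paper: the paper itself does not give a detailed argument for this theorem but simply records that it ``follows from the first structure theorem (Proposition \ref{P21}), Theorem \ref{T32}, Theorem \ref{T61}, and the arguments as in \cite{tz2}'', and you have correctly unpacked exactly these ingredients, including the identification $V=\mathrm{Null}(-c_1^B(M))$ via \cite{ct}, the verification of hypotheses (\ref{a})--(\ref{b}) of Theorem \ref{T61} from Theorem \ref{T51} and Lemma \ref{L61}, and the push-down/lift through $\pi$ to invoke the K\"ahler arguments of \cite{tz2}. The only cosmetic point is that (\ref{40}) is a time-averaged estimate, so from an arbitrary $t_i\to\infty$ one first passes to nearby times (or a further subsequence) to get the pointwise-in-time smallness needed for (\ref{a}); this is standard and consistent with the ``along a subsequence'' conclusion.
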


Then our main results in this paper as in Theorem \ref{T11} and Corollary
\ref{C11} follows easily from Theorem \ref{T62}.

Finally, we add some remark about the Sasaki analogue of Guo-Song-Weinkove
\cite{gsw} arguments for the contraction on the foliation $(-2)$-curve for
$n=2$.\ Let $(M,\xi)$ be a compact quasi-regular Sasakian $5$-manifold.\ A
basic $1$-cycle $V$ on $M$\ is a formal finite sum $V=\sum a_{i}V_{i}$, for
$a_{i}\in\mathbb{Z}$ and $V_{i}$ is the irreducible invariant Sasakian
$3$-manifold. We denote by $N_{1}(M)_{\mathbb{Z}}$ the space of $1$-cycles
modulo numerical equivalence. Write%
\[%
\begin{array}
[c]{c}%
N_{1}(M)_{\mathbb{Q}}=N_{1}(M)_{\mathbb{Z}}\otimes_{\mathbb{Z}}\mathbb{Q}%
\text{\ \textrm{and}\ }N_{1}(M)_{\mathbb{R}}=N_{1}(M)_{\mathbb{Z}}%
\otimes_{\mathbb{Z}}\mathbb{R}\mathbf{.}%
\end{array}
\]
Then write $NE(M)$ for the cone of effective elements of $N_{1}(M)_{\mathbb{R}%
}$ and $\overline{NE(M)}$ for its closure. Furthermore, a basic divisor
$D^{T}$ is ample if and only if
\[%
\begin{array}
[c]{c}%
D^{T}\cdot V>0
\end{array}
\]
for all nonzero $V\in\overline{NE(M)}$. It is the Kleiman criterion for the
ample line bundle.

In view of the cohomological characterization of the maximal solution of the
Sasaki-Ricci flow (\ref{1}) (see section $3$), we start with a pair
$(M,H^{T}),$ where $M$ is a Sasakian manifold with an ample basic divisor
$H^{T}$. Let
\[%
\begin{array}
[c]{c}%
T_{0}=\sup\{t>0\mathbf{\ |\ }H^{T}+tK_{M}^{T}\text{ is \textrm{nef}}\}.
\end{array}
\]
Denote
\[%
\begin{array}
[c]{c}%
L_{0}^{T}:=H^{T}+T_{0}K_{M}^{T}%
\end{array}
\]
which is a basic $Q$-line bundle and semi-ample. In fact, it follows from
Kleiman criterion that$\ mL_{0}^{T}-T_{0}K_{M}^{T}$ is ample and then nef and
big for some sufficiently large $m$. Then by Kawamata criterion for base point
free, we have the semi-ample for $L_{0}^{T}.$

Next we define a subcone
\[%
\begin{array}
[c]{c}%
R:=\{V\in\overline{NE(M)}|\text{\ }L_{0}^{T}\cdot V=0\}
\end{array}
\]
which is a foliation extremal ray $R$ with the generic choice of $H^{T}.$
Moreover, we have $R=\overline{NE(M)}_{K_{M}^{T}<0}\cap(L_{0}^{T})^{\perp}. $
Then
\[%
\begin{array}
[c]{c}%
0=L_{0}^{T}\cdot V\Rightarrow K_{M}^{T}\cdot V=-\frac{1}{T_{0}}(H^{T}\cdot
V)<0.
\end{array}
\]
That is the map $\Psi$ induced from $(L_{0}^{T})^{m}$ contract all foliation
curves whose class lies in the foliation extremal ray $R$ with
\[%
\begin{array}
[c]{c}%
L_{0}^{T}\cdot V=0\text{ \textrm{and}\ }K_{M}^{T}\cdot V<0.
\end{array}
\]
The union of all foliation curves is called the locus of the foliation
extremal ray $R$ which is exactly the set of points where the map
$\Psi:M\rightarrow N$ is not isomorphism.

We observe that
\[%
\begin{array}
[c]{c}%
K_{M}^{T}\cdot V=0
\end{array}
\]
as $T_{0}\rightarrow\infty.$ Then the floating foliation $(-2)$-curves $V$
which is entirely contained in the smooth locus of $M$ with respective to the
foliation $\mathcal{F}_{\xi}$, .will be contracted to orbifold points by the
Sasaki-Ricci flow as $T_{0}\rightarrow\infty.$ We refer to \cite{clw} for some details.

\appendix

\section{ \ }

In this appendix, for a completeness, we will address the preliminary notions
on the Sasakian structure, the leave space and its foliation singularities,
basic holomorphic line bundles and basic divisors over Sasakian manifolds. We
refer to \cite{bg}, \cite{m}, and references therein for some details.

\subsection{Sasakian Structures, Leave Spaces and Its Foliation Singularities}

\begin{definition}
Let $(M,\eta,\xi,\Phi,g)$ be a compact Sasakian $(2n+1)$-manifold. If the
orbits of the Reeb vector field $\xi$ are all closed, and hence circles, then
integrates to an isometric $U(1)$ action on $(M,g)$. Since it is nowhere zero
this action is locally free; that is, the isotropy group of every point in $M$
is finite. If the $U(1)$ action is in fact free then the Sasakian structure is
said to be regular. Otherwise, it is said to be quasi-regular. It is said to
be irregular if the orbits of are not all closed. In this case the closure of
the $1$-parameter subgroup of the isometry group of $(M,g)$ is isomorphic to a
torus $T^{k}$, for some positive integer $k$ called the rank of the Sasakian
structure. In particular, irregular Sasakian manifolds have at least a $T^{2}$ isometry.
\end{definition}

Note that in the regular or quasi-regular case, the leaf space
$Z=M/\mathcal{F}_{\xi}=M/U(1)$ has the structure of a compact manifold or
orbifold, respectively. In the latter case the orbifold singularities of $Z$
descend from the points in $M$ with non-trivial isotropy subgroups which
finite subgroups of $U(1)$ and will all be\ isomorphic to cyclic groups. The
transverse K\"{a}hler structure described above then pushes down to a
K\"{a}hler structure on $Z$, so that $Z$ is a compact complex manifold or
orbifold equipped with a K\"{a}hler metric $h.$%

The first structure theorem on Sasakian manifolds states that %

\begin{proposition}
\label{P21}(\cite{ru}) Let $(M,\eta,\xi,\Phi,g)$ be a compact quasi-regular
Sasakian manifold of dimension $2n+1$ and $Z$ denote the space of leaves of
the characteristic foliation $\mathcal{F}_{\xi}$ (just as topological space). Then

(i) $Z$ carries the structure of a Hodge orbifold $\mathcal{Z=}(Z,\Delta)$
with an orbifold K\"{a}hler metric $h$ and K\"{a}hler form $\omega$ which
defines an integral class $[p^{\ast}\omega]$ in $H_{orb}^{2}(Z,\mathbb{Z}%
\mathbf{)}$ in such a way that $\pi:(M,g,\omega)\rightarrow(Z,h,\omega_{h})$
is an orbifold Riemannian submersion, and a principal $S^{1}$-orbibundle
($V$-bundle) over $Z.$ Furthermore, it satisfies $\frac{1}{2}d\eta=\omega
=\pi^{\ast}(\omega_{h}).$ The fibers of $\pi$ are geodesics.

(ii) $Z$ is also a $Q$-factorial, polarized, normal projective algebraic variety.

(iii) The orbifold $Z$ is Fano if and only if $\mathrm{Ric}_{g}>-2$. In this
case $Z$ as a topological space is simply connected and as an algebraic
variety is uniruled with Kodaira dimension $-\infty$.

(iv) $(M,\xi,g,\omega)$ is Sasaki-Einstein if and only if $(Z,h,\omega_{h})$
is K\"{a}hler-Einstein with scalar curvature $4n(n+1).$

(v) If $(M,\eta,\xi,\Phi,g)$ is regular then the orbifold structure is trivial
and $\pi$ is a principal circle bundle over a smooth projective algebraic variety.

(vi) As real cohomology classes, there is a relation between the basic Chern
class and orbifold Chern class
\[%
\begin{array}
[c]{c}%
c_{k}^{B}(M):=c_{k}(\emph{F}_{\xi})=\pi^{\ast}c_{k}^{orb}(Z).
\end{array}
\]

Conversely, for a compact Hodge orbifold $(Z,h)$. Let $\pi:M\rightarrow Z$ be
a principal $U(1)$-orbibundle over $Z$ whose first Chern class is an integral
class defined by $[\omega_{Z}]$, and let $\eta$ be a $1$-form on $M$ with
$\frac{1}{2}d\eta=\pi^{\ast}\omega_{h}$ (is then $\eta$ proportional to a
connection $1$-form). Then $(M,\pi^{\ast}h+\eta\otimes\eta)$ is a Sasakian
orbifold. Furthermore, if all the local uniformizing groups inject into the
structure group $U(1)$, then the total space $M$ is a smooth manifold.
\end{proposition}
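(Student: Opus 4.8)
The plan is to reconstruct this structure theorem from two ingredients: the foliation/quotient picture for the locally free $U(1)$-action generated by $\xi$, and the orbifold analogue of the Boothby--Wang correspondence for the converse. I would treat the parts roughly in the order (i), (v), (ii), then the curvature identities (iii), (iv), (vi), and finally the converse.

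\textbf{Step 1 (the leaf space is a Hodge orbifold; (i) and (v)).} Since $\xi$ is a nowhere-vanishing unit Killing field all of whose orbits are closed, it integrates to an isometric $U(1)$-action on $(M,g)$; local freeness is immediate from $\xi\neq 0$, so every isotropy group is a finite cyclic subgroup of $U(1)$. By the slice theorem for isometric actions, $Z=M/U(1)$ inherits an orbifold structure whose local uniformizing charts have the form $\widetilde U/G$ with $G$ cyclic, and $\pi\colon M\to Z$ is an orbifold Riemannian submersion from $(M,g^T)$ to the induced metric $h$ on $Z$. The transverse Kähler form $\tfrac12 d\eta$ is basic and $\xi$-invariant, hence descends to a closed real $(1,1)$-form $\omega_h$ on $Z$ with $\pi^\ast\omega_h=\tfrac12 d\eta=\omega$, and positivity of $g^T$ makes it Kähler. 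Integrality of the class comes from observing that $\tfrac12\eta$ is a connection $1$-form on the principal $S^1$-orbibundle $\pi\colon M\to Z$ (since $\eta(\xi)=1$, $\mathcal L_\xi\eta=0$): its curvature is $\omega_h$, so Chern--Weil for $V$-bundles identifies $[\omega_h]$ with the real image of the Euler class, which lies in the image of $H^2_{orb}(Z,\mathbb Z)$, and likewise $c_k^B(M)=\pi^\ast c_k^{orb}(Z)$, giving (vi). Part (v) is the special case where the action is free, so every $G$ is trivial and $Z$ is a manifold.

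\textbf{Step 2 (projectivity and curvature identities).} For (ii): a compact Kähler orbifold with an integral Kähler class satisfies Baily's orbifold Kodaira embedding theorem, so a high power of the orbifold line bundle with $c_1^{orb}=[\omega_h]$ is very ample and embeds $Z$ into some $\mathbb{C}\mathrm{P}^N$; normality and $Q$-factoriality follow from the fact that cyclic quotient singularities are normal and $\mathbb Q$-factorial. For (iii) and (iv) I would use the identity recorded in the preliminaries, $\mathrm{Ric}^T=\mathrm{Ric}_g+2g^T$, together with $\mathrm{Ric}^T=\pi^\ast\mathrm{Ric}_h$ (the O'Neill formula for a submersion with totally geodesic one-dimensional fibres), which gives $\mathrm{Ric}_g|_D=\pi^\ast\mathrm{Ric}_h-2g^T$ and $\mathrm{Ric}_g(\xi,\xi)=2n$ from $R(X,\xi)Y=g(\xi,Y)X-g(X,Y)\xi$. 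Then $Z$ is Fano, i.e.\ $c_1^{orb}(Z)=[\rho_h/2\pi]>0$, iff $\mathrm{Ric}^T>0$ iff $\mathrm{Ric}_g>-2g$; simple connectivity, uniruledness and $\varkappa(Z)=-\infty$ then follow from the corresponding facts for Fano orbifolds (Kobayashi's $\pi_1=0$ theorem and the orbifold versions of uniruledness of Fanos). For (iv), $(M,\xi,g)$ is Sasaki--Einstein with $\mathrm{Ric}_g=2n\,g$ iff $\pi^\ast\mathrm{Ric}_h=(2n+2)g^T$, i.e.\ $(Z,h)$ is Kähler--Einstein with $\mathrm{Ric}_h=(2n+2)h$, whose scalar curvature is $2n(2n+2)=4n(n+1)$.

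\textbf{Step 3 (the converse: orbifold Boothby--Wang).} Given a compact Hodge orbifold $(Z,h)$ with integral Kähler class $[\omega_h]$, take a principal $U(1)$-orbibundle $\pi\colon M\to Z$ with that Euler class and a connection $1$-form $\tfrac12\eta$ with curvature $\pi^\ast\omega_h$; set $g=\pi^\ast h+\eta\otimes\eta$ and let $\xi$ be the $U(1)$-generator with $\eta(\xi)=1$. Then $d\eta=2\pi^\ast\omega_h$ is transverse $(1,1)$ and positive, and one checks that the resulting $(\eta,\xi,\Phi,g)$ (with $\Phi$ lifted from the complex structure on $TZ$) is Sasakian, i.e.\ the metric cone is Kähler. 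The total space $M$ is a smooth manifold exactly when each local uniformizing group acts freely on the total space of the orbibundle, which happens precisely when every local uniformizing group injects into the structure group $U(1)$, since then it acts on the circle fibre by a faithful, hence free, rotation.

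\textbf{Main obstacle.} The delicate part is Step 1: making the differential-geometric quotient $M/U(1)$ genuinely an orbifold with the claimed cyclic uniformizing charts, showing $\pi$ is an orbifold submersion, and—most importantly—establishing that $[\omega_h]$ is integral as an \emph{orbifold} cohomology class (not merely as a real class), which requires the Chern--Weil argument for $V$-bundles and careful bookkeeping of the $\tfrac12$ normalization relating $d\eta$, the connection form, and the Euler class. The curvature computations in Step 2 are routine given the structure equations, and the projectivity statement and the converse are citations of orbifold versions of classical theorems (Baily's embedding, Kobayashi's theorem, Boothby--Wang).
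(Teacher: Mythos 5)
The paper does not prove Proposition \ref{P21}; it is quoted verbatim from the literature (Rukimbira \cite{ru}, and it is the standard first structure theorem as presented in Boyer--Galicki \cite{bg}), so there is no in-paper argument to compare against. That said, your reconstruction follows exactly the route taken in those references: the slice theorem for the isometric locally free $U(1)$-action to produce the cyclic orbifold charts and the orbifold Riemannian submersion, descent of the basic form $\tfrac12 d\eta$ to an integral orbifold K\"ahler class via Chern--Weil for the connection $1$-form on the $V$-bundle, Baily's orbifold embedding theorem for projectivity, the identities $\mathrm{Ric}^T=\mathrm{Ric}_g+2g^T$ and $\mathrm{Ric}_g(\xi,\xi)=2n$ for (iii) and (iv), and the orbifold Boothby--Wang construction for the converse. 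Your computation of the scalar curvature $2n(2n+2)=4n(n+1)$ and the smoothness criterion (local uniformizing groups injecting into $U(1)$ act freely on the fibre) are both correct. The only point I would flag is in (iii): as literally stated, "Fano iff $\mathrm{Ric}_g>-2$" requires the usual caveat that the forward implication may involve passing to a transverse K\"ahler deformation of the given Sasakian metric, since positivity of $c_1^{orb}(Z)$ does not force the particular metric $h$ to have positive Ricci; this is how the cited sources phrase it, and your sketch silently identifies the two. Otherwise your identification of the delicate step --- the integrality of $[\omega_h]$ as an orbifold class and the bookkeeping of the $\tfrac12$ normalization --- is exactly where the real content of the theorem lies.
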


Note that in the quasi-regular case, the projection is instead a principal
$U(1)$ orbibundle, with $\omega_{Z}$ again proportional to a curvature
$2$-form. The orbifold cohomology group $H_{orb}^{2}(Z,\mathbb{Z})$ classifies
isomorphism classes of principal $U(1)$ orbibundles over an orbifold $Z$, just
as in the regular manifold case the first Chern class in $H^{2}(Z,\mathbb{Z})$
classifies principal $U(1)$ bundles. The K\"{a}hler form $H_{orb}%
^{2}(Z,\mathbb{Z})$ then defines a cohomology class $[\omega_{Z}]$ in
$H^{2}(Z,\mathbb{R})$ which is proportional to a class in the image of the
natural map
\[%
\begin{array}
[c]{c}%
p:H_{orb}^{2}(Z,\mathbb{Z})\rightarrow H_{orb}^{2}(Z,\mathbb{R})\rightarrow
H^{2}(Z,\mathbb{R}).
\end{array}
\]

On the other hand, the second
structure theorem on Sasakian manifolds states that %

\begin{proposition}
(\cite{ru}) Let $(M,g)$ be a compact Sasakian manifold of dimension $2n+1$.
Any Sasakian structure $(\xi,\eta,\Phi,g)$ on $M$ is either quasi-regular or
there is a sequence of quasi-regular Sasakian structures $(\xi_{i},\eta
_{i},\Phi_{i},g_{i})$ converging in the compact-open $C^{\infty}$-topology to
$(\xi,\eta,\Phi,g).$ In particular, if $M$ admits an irregular Sasakian
structure, it admits many locally free circle actions.
\end{proposition}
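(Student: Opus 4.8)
The statement above is the second structure theorem of \cite{ru}, and the plan is to prove it by deforming the Reeb vector field inside the Sasaki cone of the torus generated by the closure of the Reeb flow. First I would record the compactness input: since $M$ is compact, $\mathrm{Isom}(M,g)$ is a compact Lie group, and $\xi$, being a unit Killing field, generates a one-parameter subgroup whose closure $T$ in $\mathrm{Isom}(M,g)$ is a compact connected abelian subgroup, hence a torus $T^{k}$ with $k\geq 1$. Every element of $T$ fixes $\xi$ (a closed condition satisfied by the flow $\exp(t\xi)$), hence preserves $\eta=g(\xi,\cdot)$, the Levi-Civita connection, and therefore $\Phi(\cdot)=\nabla_{\cdot}\xi$; thus $T$ preserves the entire Sasakian structure, and every $X$ in $\mathfrak{t}=\mathrm{Lie}(T^{k})$ satisfies $[X,\xi]=0$, $\mathcal{L}_{X}\eta=0$, $\mathcal{L}_{X}\Phi=0$. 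If $k=1$ the orbits of $\xi$ are all closed and the structure is quasi-regular, and we are done; so from now on assume $k\geq 2$, i.e.\ the structure is irregular.

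Next I would introduce the Sasaki cone
\[
\mathfrak{t}^{+}=\{X\in\mathfrak{t}\colon\eta(X)>0 \text{ on } M\},
\]
an open convex cone in $\mathfrak{t}\cong\mathbb{R}^{k}$ containing $\xi$ (since $\eta(\xi)=1$). For each $X\in\mathfrak{t}^{+}$ perform the standard deformation of the Reeb vector field: put $\xi_{X}=X$, $\eta_{X}=\eta/\eta(X)$, let $\Phi_{X}=\Phi-\Phi\xi_{X}\otimes\eta_{X}$ (so that $\Phi_{X}$ agrees with $\Phi$ on $\ker\eta_{X}=\ker\eta$ and $\Phi_{X}\xi_{X}=0$), and let $g_{X}$ be the associated metric. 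One verifies directly that $(\xi_{X},\eta_{X},\Phi_{X},g_{X})$ is a compatible almost contact metric structure with $\eta_{X}(\xi_{X})=1$, and that $\eta_{X}$ is a contact form because $\eta(X)>0$ keeps $d\eta_{X}$ transversally nondegenerate. The essential point --- and the step I expect to be the main obstacle --- is that this deformed structure is again \emph{Sasakian}: equivalently, that the K\"ahler cone $(C(M),\overline{g},J)$ stays K\"ahler when the radial direction $-J\xi$ is replaced by $-J\xi_{X}$, or, at the transverse level, that the new transverse almost complex structure is integrable and admits a compatible transverse K\"ahler metric $g_{X}^{T}$. This holds precisely because every element of $\mathfrak{t}$ is a transversely holomorphic Killing field commuting with $\xi$ and the positivity $\eta(X)>0$ keeps $g_{X}^{T}$ positive-definite; I would cite the detailed verification from \cite{ru} (see also \cite{bg}).

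Granting this, the rest is routine. The assignment $X\mapsto(\xi_{X},\eta_{X},\Phi_{X},g_{X})$ is obtained from $X$, $\eta$, $d\eta$, $\Phi$ by algebraic operations together with division by $\eta(X)$, which is bounded below on a neighbourhood of $\xi$ in $\mathfrak{t}^{+}$; hence it is smooth, and $X\to\xi$ forces $(\xi_{X},\eta_{X},\Phi_{X},g_{X})\to(\xi,\eta,\Phi,g)$ in the $C^{\infty}$-topology. Now $X\in\mathfrak{t}$ generates a closed circle subgroup of $T^{k}$ exactly when $X$ lies in the $\mathbb{Q}$-span of the integral lattice $\Lambda=\ker(\exp\colon\mathfrak{t}\to T^{k})$, and such rational vectors are dense in $\mathfrak{t}$, hence dense in the open cone $\mathfrak{t}^{+}$. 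Choosing rational $X_{i}\in\mathfrak{t}^{+}$ with $X_{i}\to\xi$, the Reeb field $\xi_{X_{i}}=X_{i}$ has all orbits closed, so $(\xi_{X_{i}},\eta_{X_{i}},\Phi_{X_{i}},g_{X_{i}})$ is quasi-regular and converges to $(\xi,\eta,\Phi,g)$; this gives the first assertion. Finally, each rational $X_{i}$ generates a circle subgroup of $T^{k}$ acting on $M$, locally freely since $\xi_{X_{i}}$ is nowhere vanishing; as the rational directions are dense in the open cone $\mathfrak{t}^{+}$ of dimension $k\geq 2$, there are infinitely many distinct such actions, which is the claimed abundance of locally free circle actions.
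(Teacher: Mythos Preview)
The paper does not prove this proposition: it is stated with a citation to \cite{ru} and used as background (the ``second structure theorem''), so there is no in-paper argument to compare against. Your outline is the standard Rukimbira argument and is correct in substance: the closure of the Reeb flow in $\mathrm{Isom}(M,g)$ is a torus $T^{k}$, the Sasaki cone $\mathfrak{t}^{+}=\{X\in\mathfrak t:\eta(X)>0\}$ is open and contains $\xi$, rational directions in $\mathfrak t$ are dense and give closed-orbit (hence quasi-regular) Reeb fields, and the deformation $X\mapsto(\xi_X,\eta_X,\Phi_X,g_X)$ is continuous. Two small points worth tightening: first, the claim that $X$ is the Reeb field of $\eta_X=\eta/\eta(X)$ uses $\mathcal L_X\eta=0$, which gives $i_Xd\eta=-d(\eta(X))$ and then a short computation yields $i_Xd\eta_X=0$; you assert this implicitly but it is the algebraic heart of the construction. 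Second, your phrasing ``the new transverse almost complex structure is integrable'' slightly misplaces the difficulty: since $\ker\eta_X=\ker\eta=D$ and $\Phi_X|_D=\Phi|_D$, the underlying CR structure is literally unchanged, so integrability is automatic; the genuine content (which you rightly flag and cite) is that the new cone metric is K\"ahler, equivalently that $d\eta_X(\cdot,\Phi_X\cdot)+\eta_X\otimes\eta_X$ is Sasakian.
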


We recall that

\begin{definition}
\label{D2021}An orbifold complex manifold is a normal, compact, complex space
$Z$ locally given by charts written as quotients of smooth coordinate charts.
That is, $Z$ can be covered by open charts $Z=\cup U_{i}.$ The orbifold charts
on $(Z,U_{i},\varphi_{i})$ is defined by the local uniformizing systems
$(\widetilde{U_{i}},G_{i},\varphi_{i})$ centered at the point $p_{i}$, where
$G_{_{i}}$ is the local uniformizing finite group acting on a smooth complex
space $\widetilde{U_{i}}$ such that $\varphi_{i}:\widetilde{U_{i}}\rightarrow
U_{i}=\widetilde{U_{i}}/G_{_{i}}$ is the biholomorphic map. A point $x$ of
complex orbifold $X$ whose isotropy subgroup $\Gamma_{x}\neq Id$ is called a
singular point. Those points with $\Gamma_{x}=Id$ are called regular points.
The set of singular points is called the orbifold singular locus or orbifold
singular set, and is denoted by $\Sigma^{orb}(Z)$.
\end{definition}

Let $\Gamma\subset GL(n,\mathbb{C})$ be a finite subgroup. Then the quotient
space $\mathbb{C}^{n}/\Gamma$ is smooth if and only if $\Gamma$ is a
reflection group which fixes a hyperplane in $\mathbb{C}^{n}$. Now let
$(M,\eta,\xi,\Phi,g)$ be a compact quasi-regular Sasakian manifold of
dimension $2n+1$. By the first structure theorem, the underlying complex space
$\mathcal{Z}=(Z,U_{i})$ is a normal, orbifold variety with the
algebro-geometric singular set $\Sigma(Z)$. Then $\Sigma(Z)\subset\Sigma
^{orb}(Z)$ and it follows that $\Sigma(Z)=\Sigma^{orb}(Z)$ if and only if none
of the local uniformizing groups $\Gamma_{i}$ of the orbifold $\mathcal{Z}%
=(Z,U_{i})$ contain a reflection. If some $\Gamma_{i}$ contains a reflection,
then the reflection fixes a hyperplane in $\widetilde{U_{i}}$ giving rise to a
ramification divisor on $\widetilde{U_{i}}$ and a branch divisor on $Z.$

\begin{definition}
(i) The branch divisor $\Delta$ of an orbifold $\mathcal{Z}=(Z,\Delta)$ is a
$Q$-divisor on $Z$ of the form
\[%
\begin{array}
[c]{c}%
\Delta=\sum_{\alpha}(1-\frac{1}{m_{\alpha}})D_{\alpha},
\end{array}
\]
where the sum is taken over all Weil divisors $D_{\alpha}$ that lie in the
orbifold singular locus $\Sigma^{orb}(Z)$, and $m_{\alpha}$ is the $gcd$ of
the orders of the local uniformizing groups taken over all points of
$D_{\alpha}$ and is called the ramification index of $D_{\alpha}.$

(ii) The orbifold structure $\mathcal{Z}=(Z,\Delta)$ is called well-formed if
the fixed point set of every non-trivial isotropy subgroup has codimension at
least two. That is, $\mathcal{Z}=(Z,\emptyset).$ Then $Z$ is well-formed if
and only if its orbifold singular locus and algebro-geometric singular locus
coincide, equivalently $Z$ has no branch divisors.
\end{definition}

\begin{example}
For instance, the weighted projective $CP(1;4;6)$ has a branch divisor
$\frac{1}{2}D_{0}=\{z_{0}=0\}.$ But $CP(1;2;3)$ is a unramified well-formed
\ orbifold with two singular points, $(0;1;0)$ with local uniformizing group
the cyclic group $\mathbb{Z}_{2}$, and $(0;0;1)$ with local uniformizing group
$\mathbb{Z}_{3}$.
\end{example}

Note that the orbifold canonical divisor $K_{\emph{Z}}^{orb}$ and canonical
divisor $K_{Z}$ are related by
\begin{equation}
K_{\emph{Z}}^{orb}=\varphi^{\ast}(K_{Z}+[\Delta]).\label{orbifold}%
\end{equation}
In particular, $K_{\emph{Z}}^{orb}=\varphi^{\ast}K_{Z}$ if and only if there
are no branch divisors.

For all previous discussions with the special case for $n=2,$ we have the
following result concerning its foliation cyclic quotient singularities.

\begin{theorem}
\label{T21}(\cite{clw}) Let $(M,\eta,\xi,\Phi,g)$ be a compact quasi-regular
Sasakian $5$-manifold and $Z$ be its leave space of the characteristic
foliation\textbf{. } Then $Z$ is a $Q$-factorial normal projective algebraic
orbifold surface satisfying

\begin{enumerate}
\item if its leave space $(Z,\emptyset)$ has at least codimension two fixed
point set of every non-trivial isotropy subgroup. That is to say $Z$ is
well-formed, then $Z$ has isolated singularities of a finite cyclic quotient
of $\mathbb{C}^{2}$ and the action is
\[
\mu_{Z_{r}}:(z_{1},z_{2})\rightarrow(\zeta^{a}z_{1},\zeta^{b}z_{2}),
\]
where $\zeta$ is a primitive $r$-th root of unity. We denote the cyclic
quotient singularity by $\frac{1}{r}(a,b)$ with $(a,r)=1=(b,r)$. In
particular, the action can be rescaled so that every cyclic quotient
singularity corresponds to a $\frac{1}{r}(1,a)$-point with $(r,a)=1,$
$\zeta=e^{\frac{2\pi i}{r}}$. In particular, it is klt (Kawamata log terminal)
singularities. More precisely, the corresponding singularities in $(M,\eta
,\xi,\Phi,g)$\ is called\ foliation cyclic quotient singularities of type
\[%
\begin{array}
[c]{c}%
\frac{1}{r}(1,a)
\end{array}
\]
at a singular fibre $S_{p}^{1}$ in $M$.

\item if its leave space $(Z,\Delta)$ has the codimension one fixed point set
of some non-trivial isotropy subgroup. Then the action is
\[%
\begin{array}
[c]{c}%
\mu_{Z_{r}}:(z_{1},z_{2})\rightarrow\left(  e^{\frac{2\pi\sqrt{-1}a_{1}}%
{r_{1}}}z_{1},e^{\frac{2\pi\sqrt{-1}a_{2}}{r_{2}}}z_{2}\right)  ,
\end{array}
\]
for some positive integers $r_{1},$ $r_{2}$ whose least common multiplier is
$r$, and $a_{i},$ $i=1,2$ are integers coprime to $r_{i},$ $i=1,2$. Then the
foliation singular set contains some $3$-dimensional submanifolds of $M.$ More
precisely, the corresponding singularities in $(M,\eta,\xi,\Phi,g)$%
\textbf{\ }is called\ the Hopf $S^{1}$-orbibundle over a Riemann surface
$\Sigma_{h}.$
\end{enumerate}
\end{theorem}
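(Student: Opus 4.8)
The plan is to read off the local model for the leaf space $Z$ directly from the first structure theorem (Proposition \ref{P21}) together with the elementary representation theory of the isotropy groups. Since $\dim M=5$ corresponds to $n=2$, Proposition \ref{P21} already gives that $Z$ is a Hodge orbifold which is $Q$-factorial, polarized and normal projective of complex dimension two, and that $\pi\colon(M,g,\omega)\to(Z,h,\omega_h)$ is a principal $S^{1}$-orbibundle realized as an orbifold Riemannian submersion with $\tfrac12 d\eta=\pi^\ast(\omega_h)$. This secures the global assertion that $Z$ is a $Q$-factorial normal projective algebraic orbifold surface, so it remains only to describe the orbifold points of $Z$ and their preimages in $M$. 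The orbifold singularities descend from points $p\in M$ whose isotropy subgroup $G_p\subset U(1)$ under the locally free $U(1)$-action is nontrivial; being a finite subgroup of $U(1)$, each such $G_p$ is cyclic, say $G_p\cong\mathbb{Z}_r$, and this $\mathbb{Z}_r$ is exactly the local uniformizing group at $\pi(p)\in Z$ in the sense of Definition \ref{D2021}.

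First I would linearize the local action. On a local uniformizing chart $(\widetilde U,\mathbb{Z}_r,\varphi)\subset\mathbb{C}^2$ centered at the singular point, the finite cyclic group $\mathbb{Z}_r$ fixes the center, so by Cartan's linearization (averaging coordinates, or equivalently a Hermitian metric) we may assume the action is linear, i.e. $\mathbb{Z}_r\hookrightarrow GL(2,\mathbb{C})$. A finite abelian group of matrices is simultaneously diagonalizable, so after a holomorphic linear change of coordinates the generator acts by
\[
\mu_{Z_r}\colon (z_1,z_2)\longmapsto(\zeta^{a}z_1,\zeta^{b}z_2),\qquad \zeta=e^{2\pi i/r},
\]
and faithfulness of the local uniformizing action forces $\gcd(a,b,r)=1$. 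This produces the cyclic quotient singularity $\tfrac1r(a,b)$ and reduces the whole statement to an arithmetic analysis of the exponents $(a,b)$.

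The dichotomy is the heart of the argument and the step I expect to be most delicate. A nontrivial power $\mu_{Z_r}^{k}$ fixes the axis $\{z_2=0\}$ pointwise exactly when $r\mid ka$, and such $k\not\equiv0$ exists iff $\gcd(a,r)>1$ (in which case $\gcd(a,b,r)=1$ guarantees the corresponding element acts nontrivially in the normal direction, so it is a genuine quasi-reflection); symmetrically for $\{z_1=0\}$ and $\gcd(b,r)$. Hence in the well-formed case, where by definition every nontrivial isotropy element fixes a set of complex codimension at least two and no branch divisor appears, one is forced to have $\gcd(a,r)=\gcd(b,r)=1$, so the singularity is isolated of type $\tfrac1r(a,b)$ with $(a,r)=(b,r)=1$. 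Since $a$ is then a unit modulo $r$, replacing $\zeta$ by the primitive root $\zeta^{a}$ rescales the action to the normal form $\tfrac1r(1,a)$ with $\gcd(a,r)=1$; the klt property then follows from the standard discrepancy computation for two-dimensional cyclic quotient singularities through their toric Hirzebruch--Jung resolution, whose discrepancies are all strictly greater than $-1$. Pulling this back through $\pi$, the isolated orbifold point lifts to a single exceptional circle, the singular fibre $S^1_p\subset M$, which gives the foliation cyclic quotient singularity of type $\tfrac1r(1,a)$.

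In the non-well-formed case some nontrivial isotropy element has a one-dimensional fixed locus, so by the computation above at least one of $\gcd(a,r)$, $\gcd(b,r)$ exceeds one; writing $r_1,r_2$ for the orders of the induced rotations on the two axes and $r=\operatorname{lcm}(r_1,r_2)$ recovers the stated form $\bigl(e^{2\pi\sqrt{-1}a_1/r_1}z_1,\,e^{2\pi\sqrt{-1}a_2/r_2}z_2\bigr)$ with $\gcd(a_i,r_i)=1$. The fixed divisor in $Z$ is a curve, and its $\pi$-preimage together with the $S^1$-fibre is a three-dimensional invariant submanifold of $M$; identifying this submanifold as the Hopf $S^{1}$-orbibundle over the underlying Riemann surface $\Sigma_h$ completes the picture. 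Beyond the two linear-algebra computations, the remaining work is to confirm that these local models glue to the global orbibundle structure asserted by Proposition \ref{P21} and that no further singularity types occur, both of which follow from the quasi-regularity of $\xi$ and the cyclicity of the $U(1)$-isotropy.
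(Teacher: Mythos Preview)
The paper does not supply its own proof of Theorem~\ref{T21}; the result is simply quoted from \cite{clw} and stated without argument in the appendix. So there is nothing in the present paper to compare your proposal against directly.

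That said, your argument is correct and is essentially the standard one. The key inputs are exactly those you identify: Proposition~\ref{P21} gives the global orbifold surface structure on $Z$; quasi-regularity forces every isotropy group to be a finite subgroup of $U(1)$, hence cyclic; Cartan linearization plus simultaneous diagonalization of an abelian group reduces the local model to $(z_1,z_2)\mapsto(\zeta^a z_1,\zeta^b z_2)$ with $\gcd(a,b,r)=1$ by faithfulness. The dichotomy between the well-formed and non-well-formed cases is precisely the question of whether the action contains a quasi-reflection, and your $\gcd$ computation pins this down correctly. The rescaling to the $\tfrac{1}{r}(1,a)$ normal form and the klt assertion via Hirzebruch--Jung are both standard and unproblematic; your verification that $r=\operatorname{lcm}(r_1,r_2)$ in the branch-divisor case follows from faithfulness as you indicate.

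One small point worth making explicit: in the well-formed case you should note that the orbifold singular locus is automatically discrete (hence finite by compactness), since each singular point is isolated in its local model and the set of points with nontrivial isotropy is closed. You gesture at this but do not state it. Otherwise the argument is complete.
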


\begin{definition}
\label{D21}(i) Let $(M,\eta,\xi,\Phi,g)$ be a compact quasi-regular Sasakian
$5$-manifold and its leave space $(Z,\emptyset)$ of the characteristic
foliation be well-formed. Then\ the corresponding singularities in
$(M,\eta,\xi,\Phi,g)$\ is called\ foliation cyclic quotient singularities of
type $\frac{1}{r}(1,a)$ at a singular fibre $S_{p}^{1}$ in $M$. The foliation
singular set is discrete, and hence finite. It is klt (Kawamata log terminal) singularities.

(ii) Let $(M,\eta,\xi,\Phi,g)$ be a compact quasi-regular Sasakian
$5$-manifold and its leave space $(Z,\Delta)$ has the codimension one fixed
point set of some non-trivial isotropy subgroup. Then the foliation singular
set contains some $3$-dimensional submanifolds of $M.$ More precisely, the
corresponding singularities in $(M,\eta,\xi,\Phi,g)$\textbf{\ }is called\ the
Hopf $S^{1}$-orbibundle over a Riemann surface $\Sigma_{g}.$
\end{definition}

It follows\ from the Hirzebruch Jung continued fraction (\cite{r}) that

\begin{theorem}
\label{T22}(\cite{clw}) Let $(M,\eta,\xi,\Phi,g)$ be a compact quasi-regular
Sasakian $5$-manifold $M$ with a foliation cyclic quotient singularity of type
$\frac{1}{r}(1,a)$ at a singular fibre $S_{p}^{1},$ then the Hirzebruch Jung
continued fraction
\[%
\begin{array}
[c]{c}%
\frac{r}{a}=[b_{1},\cdots,b_{l}]
\end{array}
\]
gives the information on the foliation minimal resolution $\varphi
:\widetilde{M}\rightarrow M$ of $M$. We denote the exceptional foliation
curves $V_{i}$ of such a resolution. Then the exceptional foliation curves
form a chain of $\{V_{1},\cdots,V_{l}\}$ such that each $V_{i}$ has self
intersection $V_{i}^{2}=-b_{i}$ for every $i=1,\cdots,l$ and $V_{i}$
intersects another foliation curve $V_{j}$ transversely only if $j=i-1$ or
$j=i+1.$ In particular, for a foliation cyclic quotient singularity of type
$\frac{1}{k}(1,1),$ we have $\frac{k}{1}=[k]$ as the foliation $(-k)$-curve in
$\widetilde{M}$.
\end{theorem}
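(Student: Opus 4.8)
The plan is to reduce the statement to the classical Hirzebruch--Jung resolution of a cyclic quotient surface singularity on the leaf space $Z$ and then transport the picture back to $M$ through the $S^{1}$-orbibundle structure. By the first structure theorem (Proposition \ref{P21}) together with Theorem \ref{T21}, a neighbourhood of the singular fibre $S_{p}^{1}$ in $M$ is, $S^{1}$-equivariantly, a principal $S^{1}$-orbibundle over a neighbourhood of the orbifold point $p\in Z$, and near $p$ the leaf space is the cyclic quotient singularity $\mathbb{C}^{2}/\mathbb{Z}_{r}$ with weights $\tfrac{1}{r}(1,a)$, $(r,a)=1$. Thus a foliation minimal resolution $\varphi:\widetilde{M}\to M$ is precisely the pull-back to $M$ of the minimal resolution $\psi:\widetilde{Z}\to Z$, the exceptional foliation curves $V_{i}\subset\widetilde{M}$ are the irreducible invariant Sasakian $3$-manifolds sitting as $S^{1}$-orbibundles over the exceptional curves $E_{i}\subset\widetilde{Z}$, and because $\tfrac{1}{2}d\eta=\pi^{\ast}\omega_{h}$ the transverse self-intersection of $V_{i}$ equals the ordinary self-intersection of $E_{i}$ on the smooth surface $\widetilde{Z}$. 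Everything therefore follows once the analogous statement is established for $\psi:\widetilde{Z}\to Z$.

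For the leaf space I would use the toric description. Presenting $\mathbb{C}^{2}/\mathbb{Z}_{r}$ of type $\tfrac{1}{r}(1,a)$ as the affine toric variety $U_{\sigma}$ attached to the cone $\sigma=\mathrm{Cone}(e_{2},\,re_{1}-ae_{2})$ in $N=\mathbb{Z}^{2}$, the minimal resolution $\widetilde{Z}$ is obtained by subdividing $\sigma$ along the primitive lattice points $v_{0}=e_{2},v_{1},\dots,v_{l},v_{l+1}=re_{1}-ae_{2}$ on the boundary of the convex hull of $(\sigma\cap N)\setminus\{0\}$. These satisfy $v_{i-1}+v_{i+1}=b_{i}v_{i}$ with each $b_{i}\ge 2$; matching this recursion with the Euclidean-type algorithm $r=b_{1}a-r_{1}$, $a=b_{2}r_{1}-r_{2},\dots$ gives $r/a=[b_{1},\dots,b_{l}]$. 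Each ray $\mathbb{R}_{\ge 0}v_{i}$ contributes an exceptional $\mathbb{P}^{1}$, namely $E_{i}$; the relation $v_{i-1}+v_{i+1}=b_{i}v_{i}$ yields $E_{i}^{2}=-b_{i}$, while two rays are adjacent in the fan iff $|i-j|=1$, so $E_{i}\cdot E_{j}=1$ exactly when $j=i\pm 1$ and the $E_{i}$ form a linear chain. Since every $b_{i}\ge 2$, no $E_{i}$ is a $(-1)$-curve, hence $\psi$ is the minimal resolution (equivalently: iterated blow-ups of the singular point strictly decrease $r$ and introduce no $(-1)$-curve, so induction on $r$ applies). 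Pulling back as in the first paragraph produces $V_{i}^{2}=-b_{i}$ and the same chain structure for $\varphi:\widetilde{M}\to M$, and minimality of $\varphi$ among foliated resolutions is inherited from that of $\psi$ via the Sasaki analogue of Castelnuovo's contractibility criterion.

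For the last assertion, if the singularity has type $\tfrac{1}{k}(1,1)$ then $r=k$, $a=1$, and the algorithm terminates at once with $k/1=[k]$; thus $l=1$ and the exceptional set is a single foliation $(-k)$-curve $V_{1}$ with $V_{1}^{2}=-k$. Concretely this is the $S^{1}$-orbibundle over the exceptional $\mathbb{P}^{1}$ of the single blow-up resolving $\mathbb{C}^{2}/\mathbb{Z}_{k}$, i.e.\ the cone over the rational normal curve of degree $k$, whose resolution is the total space of $\mathcal{O}_{\mathbb{P}^{1}}(-k)$.

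The main obstacle I anticipate is not the continued-fraction bookkeeping, which is classical, but making rigorous the dictionary between \emph{foliated} resolutions of $(M,\mathcal{F}_{\xi})$ and resolutions of $Z$: one must check that the locally free $S^{1}$-action, with isotropy $\mathbb{Z}_{r}$ along $S_{p}^{1}$, lifts to every toric blow-up so that the resolution remains $S^{1}$-equivariant and transversely holomorphic, that the pulled-back exceptional curves are genuine irreducible invariant Sasakian $3$-manifolds carrying the correct transverse complex structure, and that transverse intersection numbers defined via $d\eta$ agree with the surface intersection numbers $E_{i}\cdot E_{j}$ on $\widetilde{Z}$ — the last point using $\tfrac{1}{2}d\eta=\pi^{\ast}\omega_{h}$ together with the projection formula for the orbifold Riemannian submersion $\pi$. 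Once this identification is in place, the theorem is a direct translation of the surface case.
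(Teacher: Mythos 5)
Your proposal is correct and follows exactly the route the paper intends: the paper itself gives no proof of Theorem \ref{T22}, merely citing the classical Hirzebruch--Jung resolution (Reid's notes, \cite{r}) and the companion preprint \cite{clw}, and your argument --- classical toric/continued-fraction resolution of the cyclic quotient point on the leaf space $Z$, transported to $M$ through the $S^{1}$-orbibundle of the first structure theorem with $\tfrac{1}{2}d\eta=\pi^{\ast}\omega_{h}$ identifying transverse and surface intersection numbers --- is precisely that reduction carried out in detail. The equivariance and intersection-compatibility issues you flag are the right ones to check, and your treatment of them is consistent with the paper's conventions ($c_{1}^{B}(L_{M})=\pi^{\ast}c_{1}^{orb}(L_{Z})$ and $L_{M}\cdot V=\int_{V}R_{h}^{T}\wedge\eta$).
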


\subsection{Basic Holomorphic Line Bundles, Basic Divisors on Sasakian
Manifolds}

Let $(M,\eta,\xi,\Phi,g)$ be a compact Sasakian $(2n+1)$-manifold. We define
$D=\ker\eta$ to be the holomorphic contact vector bundle of $TM$ such that
\[
TM=D\oplus\left\langle \xi\right\rangle =T^{1,0}(M)\oplus T^{0,1}%
(M)\oplus\left\langle \xi\right\rangle .
\]
Then its associated strictly pseudoconvex CR $(2n+1)$-manifold to be denoted
by $(M,T^{1,0}(M),\xi,\Phi).$

\begin{definition}
(\cite{ta}) Let $(M,T^{1,0}(M))$ be a strictly pseudoconvex CR $(2n+1)$%
-manifold and $E\rightarrow M$ be a $C^{\infty}$ complex vector bundle over
$M.$ A pair $(E,\overline{\partial}_{b})$ is a CR-holomorphic vector bundle if
the differential operator
\[
\overline{\partial}_{b}:\Gamma^{\infty}(E)\rightarrow\Gamma^{\infty}%
(T^{0,1}(M)^{\ast}\otimes E)
\]
is defined by%
 %

(i)
\[
\overline{\partial}_{\overline{Z}}(fs)=(\overline{\partial}_{b}f)(\overline
{Z})\otimes s+f\overline{\partial}_{\overline{Z}}s,
\]
(ii)%
\[
\overline{\partial}_{\overline{Z}}\overline{\partial}_{\overline{W}%
}s-\overline{\partial}_{\overline{W}}\overline{\partial}_{\overline{Z}%
}s-\overline{\partial}_{[\overline{Z},\overline{W}]}s=0,
\]
for any $f\in C^{\infty}(M)\otimes\mathbb{C}$, $s\in\Gamma^{\infty}(E)$ and
$Z,$ $W\in\Gamma^{\infty}(T^{1,0}(M))$.
\end{definition}

The condition (ii) of the definition means that $(0,2)$-component of the
curvature operator $R(E)$ is vanishing when $E$ admits a connection $D$ whose
$(0,1)$-part is the operator $\overline{\partial}_{b}$ as in the following Lemma.

\begin{lemma}
Let $(M,T^{1,0}(M),\theta)$ be a strictly pseudoconvex CR $(2n+1)$-manifold
and $(E,\overline{\partial}_{b})$ a CR-holomorphic vector bundle over $M$. Let
$h=\left\langle \cdot,\cdot\right\rangle $ be a Hermitian structure in $E$.
Then there exists a unique (Tanaka) connection $D$ in $E$ such that

(i)
\[
D_{\overline{Z}}s=(\overline{\partial}_{b}s)\overline{Z},
\]
(ii)
\[
Z\left\langle s_{1},s_{2}\right\rangle =\left\langle D_{Z}s_{1},s_{2}%
\right\rangle +\left\langle s_{1},D_{\overline{Z}}s_{2}\right\rangle ,
\]
(iii) The $(0,2)$-component of the curvature operator $\Theta(E)$ is
vanishing. Here $\Theta(E):=D^{2}s.$
\end{lemma}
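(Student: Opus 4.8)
The final statement is the Lemma on the Tanaka connection: given a CR-holomorphic vector bundle $(E,\overline{\partial}_b)$ over a strictly pseudoconvex CR manifold $(M,T^{1,0}(M),\theta)$ equipped with a Hermitian metric $h$, there is a unique connection $D$ with $D_{\overline{Z}}s = (\overline{\partial}_b s)\overline{Z}$, compatible with $h$, and with vanishing $(0,2)$-curvature. The plan is to follow the classical Chern-connection argument, adapted to the CR setting where the pseudohermitian structure $\theta$ (equivalently $\eta$) supplies the Reeb direction $\xi$ needed to split $TM\otimes\mathbb{C}$ into $T^{1,0}(M)\oplus T^{0,1}(M)\oplus\langle\xi\rangle$.

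First I would establish uniqueness. Suppose $D$ satisfies (i) and (ii). Condition (i) pins down $D$ on $T^{0,1}(M)$ once and for all, so the only freedom is in $D_Z$ for $Z\in T^{1,0}(M)$ and in $D_\xi$. Writing things in a local frame $\{e_a\}$ of $E$ and using the Hermitian compatibility (ii) with both entries holomorphic/antiholomorphic, one gets $Z\langle e_a, e_b\rangle = \langle D_Z e_a, e_b\rangle + \langle e_a, (\overline{\partial}_b e_b)\overline{Z}\rangle$; if $\{e_a\}$ is chosen to be a local CR-holomorphic frame (so $\overline{\partial}_b e_b = 0$), this reads $Z h_{a\overline{b}} = (\omega_a^{\ c})Z\, h_{c\overline{b}}$, which determines the $(1,0)$-part of the connection form as $\omega = \partial_b h\cdot h^{-1}$ in the usual way. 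The component $D_\xi$ is then forced by applying (ii) along $\xi$ together with the integrability of the structure; this is the one genuinely CR-flavored point, and I would handle it by noting $\mathcal{L}_\xi$ acts on basic data and using that $[\xi,Z]\in T^{1,0}(M)$ (from the foliation-normal-coordinate relations $[\xi, Z_j]=0$ recalled in Section 2) to express $D_\xi$ in terms of $D_Z$, $D_{\overline{Z}}$. Hence $D$ is unique.

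For existence I would simply \emph{define} $D$ by the formulas extracted above: set $D_{\overline{Z}}s := (\overline{\partial}_b s)\overline{Z}$, set $D_Z e_a := \omega_a^{\ c}(Z) e_c$ with $\omega = \partial_b h \cdot h^{-1}$ in a local CR-holomorphic frame, and define $D_\xi$ by the forced expression; then check this is frame-independent (the transition functions of a CR-holomorphic bundle are CR-holomorphic, so the transformation law for $\omega$ works exactly as in the complex-analytic case). Property (ii) holds by construction. For property (iii), I would compute $\Theta(E)=D^2$ and extract its $(0,2)$-component: it equals $\overline{\partial}_b^2$ acting on sections, which vanishes precisely because of axiom (ii) in the definition of a CR-holomorphic vector bundle (the compatibility $\overline{\partial}_{\overline{Z}}\overline{\partial}_{\overline{W}}s - \overline{\partial}_{\overline{W}}\overline{\partial}_{\overline{Z}}s - \overline{\partial}_{[\overline{Z},\overline{W}]}s = 0$). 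This is the same mechanism by which the $(0,2)$-part of a Chern connection vanishes on a holomorphic bundle.

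The main obstacle is the bookkeeping of the extra $\xi$-direction: unlike the Kähler/complex case there is a third piece $\langle\xi\rangle$ in the frame, so "type $(0,2)$ of the curvature" must be interpreted carefully (it means the component along $\Lambda^{0,2}$ of the transverse directions), and one must verify that the mixed terms involving $\xi$ do not obstruct properties (i)--(iii) — in particular that the definition of $D_\xi$ is consistent with both the Hermitian condition and integrability. I expect this to reduce to the identities $i(\xi)d\eta=0$ and $[\xi,Z_j]=0$ already recorded in the foliation-normal-coordinate subsection, so the CR obstruction dissolves once those are invoked. With that in hand, the remainder is the standard Chern-connection computation transcribed verbatim.
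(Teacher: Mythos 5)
The paper does not actually prove this lemma: it is quoted as background from Tanaka's monograph [Ta], so there is no in-paper argument to compare yours against. That said, your Chern-connection scheme is the right skeleton for the parts of the statement that really are determined: (i) fixes $D$ on $T^{0,1}(M)$; condition (ii) applied with $Z\in T^{1,0}(M)$ then fixes $D_{Z}$ (your $\omega=\partial_{b}h\cdot h^{-1}$ in a local CR-holomorphic frame); and (iii) is indeed automatic, because integrability gives $[\overline{Z},\overline{W}]\in T^{0,1}(M)$, so $\Theta(\overline{Z},\overline{W})$ involves only the $(0,1)$-part of $D$ and vanishes by axiom (ii) in the definition of a CR-holomorphic bundle.

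The genuine gap is the Reeb direction, and it is not a bookkeeping issue that dissolves once $i(\xi)d\eta=0$ and $[\xi,Z_{j}]=0$ are invoked. Conditions (i)--(iii) as written place no constraint on the skew-Hermitian part of $D_{\xi}$: if $D$ satisfies all three and $B$ is any smooth skew-Hermitian endomorphism field of $(E,h)$, then $D':=D+\eta\otimes B$ still satisfies (i) and (ii) (since $\eta$ annihilates $T^{1,0}(M)\oplus T^{0,1}(M)$) and has the same $(0,2)$-curvature (since $\eta$ also annihilates $[\overline{Z},\overline{W}]$). So uniqueness fails for the literal statement, and your assertion that $D_{\xi}$ is ``forced by (ii) along $\xi$ together with integrability'' cannot be made rigorous: (ii) along $\xi$ (where $\overline{\xi}=\xi$) only determines the $h$-Hermitian part of $D_{\xi}$. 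Tanaka's actual characterization carries an additional axiom pinning down $i(\xi)D$ (a condition on $i(\xi)\Theta(E)$, or, in the basic/Sasakian setting of this paper, the requirement that $D_{\xi}$ act as Lie differentiation along $\xi$ in a basic frame); an honest proof must state and use that axiom, and your uniqueness argument is incomplete without it. Two further points: the identity $[\xi,Z]\in T^{1,0}(M)$ you lean on is a torsion-free (Sasakian) fact, not valid on a general strictly pseudoconvex CR manifold as hypothesized in the lemma; and the existence of the local CR-holomorphic frame you normalize in is itself nontrivial outside the transversally holomorphic setting and should at least be flagged.
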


\begin{definition}
Let $(M,\eta,\xi,\Phi,g)$ be a compact Sasakian $(2n+1)$-manifold. A
CR-holomorphic vector bundle $(E,\overline{\partial}_{b})$ over $M$ is a basic
transverse holomorphic vector bundle over $(M,T^{1,0}(M))$ if\ there exists an
open cover $\{U_{\alpha}\}$ of $M$ and the trivializing frames on $U_{\alpha}%
$, such that its transition functions are matrix-valued basic CR functions.
The trivializing frames is called the basic transverse holomorphic frame.
\end{definition}

\begin{example}
Let $(M,\eta,\xi,\Phi,g)$ be a compact Sasakian $(2n+1)$-manifold. Then, with
respect to the trivializing frames
\[%
\begin{array}
[c]{c}%
\{\frac{\partial}{\partial x},\text{ }Z_{j}=\left(  \frac{\partial}{\partial
z^{j}}+\sqrt{-1}h_{j}\frac{\partial}{\partial x}\right)  ,\ j=1,2,\cdots,n\}
\end{array}
\]
the transition functions of such frames are basic transverse holomorphic
functions, that is $h$ is basic. Thus $T^{1,0}(M)$ is a basic transverse
holomorphic vector bundle. Moreover, the canonical (determinant) bundle
$K_{M}^{T}$ of $T^{1,0}(M)$ is a basic transverse holomorphic line bundle
whose transition functions are given by $t_{\alpha\beta}=\det(\partial
z_{\beta}^{i}/\partial z_{\alpha}^{j})$ on $U_{\alpha}\cap U_{\beta}$, where
$(x,z_{\alpha}^{1},\cdots,z_{\alpha}^{n})$ is the normal coordinate on
$U_{\alpha}.$
\end{example}

\begin{definition}
(i) Let $(M,\eta,\xi,\Phi,g)$ be a Sasakian $(2n+1)$-manifold and $L$ be a
basic transverse holomorphic bundle over $M$. A basic transverse holomorphic
section $s$ of $L$ is a collection $\{s_{\alpha}\}$ of CR-holomorphic maps
$s_{\alpha}:U_{\alpha}\rightarrow\mathbb{C}$ satisfying the transformation
rule $s_{\alpha}=t_{\alpha\beta}s_{\beta}$ on $U_{\alpha}\cap U_{\beta}$. The
transition function $t_{\alpha\beta}$ is basic. A basic Hermitian metric $h$
on $L$ is a collection $\{h_{\alpha}\}$ of smooth positive functions
$h_{\alpha}:U_{\alpha}\rightarrow\mathbb{R}$ satisfying the transformation
rule
\[
h_{\alpha}=|t_{\beta\alpha}|^{2}h_{\beta}%
\]
on $U_{\alpha}\cap U_{\beta}$. Given a basic transverse holomorphic section
$s$ and a Hermitian metric $h$, we can define the pointwise norm squared of
$s$ with respect to $h$ by
\[
|s|_{h}^{2}=h_{\alpha}s_{\alpha}\overline{s_{\alpha}}%
\]
on $U_{\alpha}$. The reader can check that $|s|_{h}^{2}$ is a well-defined
function on $M$.

(ii) A Hermitian metric is called a basic Hermitian metric if $h_{\alpha}$ is
basic. It always exists if $L$ is a basic transverse holomorphic line bundle.

(iii) We define the curvature $R_{h}^{T}$ of a basic Hermitian metric $h$ on
$L$ to be the basic closed $(1,1)$-form on $M$ given by
\[%
\begin{array}
[c]{c}%
R_{h}^{T}=-\frac{\sqrt{-1}}{2\pi}\partial_{B}\overline{\partial}_{B}\log
h_{\alpha}%
\end{array}
\]
on $U_{\alpha}$. This is well-defined. The basic first Chern class $c_{1}%
^{B}(L)$ of $L$ to be the cohomology class $[R_{h}^{T}]_{B}\in H_{\overline
{\partial}_{B}}^{1,1}(M,\mathbb{R})$. Since any two basic Hermitian metrics
$h,$ $h^{\prime}$ on $L$ are related by $h^{\prime}=e^{-\phi}h$ for some
smooth basic function $\phi$, we see that $R_{h^{\prime}}^{T}=R_{h}^{T}%
+\frac{\sqrt{-1}}{2\pi}\partial_{B}\overline{\partial}_{B}\phi$ and hence
$c_{1}^{B}(L)$ is well-defined, independent of choice of basic Hermitian
metric $h.$ We say that $(L,h)$ is positive if the curvature $R_{h}^{T}$ is
positive definite at every $p\in M.$
\end{definition}

\begin{example}
Let $(M,\eta,\xi,\Phi,g)$ be a compact Sasakian $(2n+1)$-manifold. If $g^{T}$
is a transverse K\"{a}hler metric on $M,$ then $h_{\alpha}=\det((g_{i\overline
{j}}^{\alpha})^{T})$ on $U_{\alpha}$ defines a basic Hermitian metric on the
canonical bundle $K_{M}^{T}$. The inverse $(K_{M}^{T})^{-1}$ of $K_{M}^{T}$ is
sometimes called the anti-canonical bundle. Its basic first Chern class
$c_{1}^{B}((K_{M}^{T})^{-1})$ is called the basic first Chern class of $M$ and
is often denoted by $c_{1}^{B}(M).$ Then it follows from the previous result
that $c_{1}^{B}(M)=[\mathrm{Ric}^{T}(\omega)]_{B}$ for any transverse
K\"{a}hler metric $\omega$ on a Sasakian manifold $M$.
\end{example}

\begin{definition}
(i) Let $(L,h)$ be a basic transverse holomorphic line bundle over a Sasakian
manifold $(M,\eta,\xi,\Phi,g)$ with the basic Hermitian metric $h.$ We say
that $L$ is very ample if for any ordered basis $\underline{s}=(s_{0}%
,\cdots,s_{N})$ of $H_{B}^{0}(M,L)$, the map $i_{\underline{s}}:M\rightarrow
\mathbb{C}\mathrm{P}^{N}$ given by%
\[
i_{\underline{s}}(x)=[s_{0}(x),\cdots,s_{N}(x)]
\]
is well-defined and an embedding which is $S^{1}$-equivariant with respect to
the weighted $\mathbb{C}^{\ast}$action in $\mathbb{C}^{N+1}$ as long as not
all the $s_{i}(x)$ vanish. We say that $L$ is ample if there exists a positive
integer $m_{0}$ such that $L^{m}$ is very ample for all $m\geq m_{0}.$

(ii) $L$ is a semi-ample basic transverse holomorphic line bundle if there
exists a basic Hermitian metric $h$ on $L$ such that $R_{h}^{T}$ is a
nonnegative $(1,1)$-form. In fact, there exists a $S^{1}$-equivariant
foliation base point free holomorphic map
\[
\Psi:M\rightarrow(\mathbb{C}\mathrm{P}^{N},\omega_{FS})
\]
defined by the basic transverse holomorphic section $\{s_{0},s_{1}%
,\cdots,s_{N}\}$ of $H_{B}^{0}(M,L^{m})$ which is $S^{1}$-equivariant with
respect to the weighted $\mathbb{C}^{\ast}$action. Here $N=\dim H_{B}%
^{0}(M,L^{m})-1 $ for a large positive integer $m$ and
\[%
\begin{array}
[c]{c}%
0\leq\frac{1}{m}\Psi^{\ast}(\omega_{FS})=\widehat{\omega}_{\infty}\in
c_{1}^{B}(L).
\end{array}
\]

\end{definition}

There is a Sasakian analogue of
Kodaira embedding theorem on %
a compact quasi-regular Sasakian $(2n+1)$-manifold
due to %
\cite{rt}, \cite{hlm}:

\begin{proposition}
Let $(M,\eta,\xi,\Phi,g)$ be a compact quasi-regular Sasakian $(2n+1)$%
-manifold and $(L,h)$ be a basic transverse holomorphic line bundle over $M$
with the basic Hermitian metric $h.$ Then $L$ is ample if and only if $L$ is positive.
\end{proposition}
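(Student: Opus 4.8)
The plan is to transfer the statement to the leaf orbifold $Z=M/\mathcal{F}_{\xi}$ and apply the orbifold (Baily) form of the Kodaira embedding theorem. By the first structure theorem (Proposition \ref{P21}), $\pi\colon M\to Z$ is a principal $S^{1}$-orbibundle over a $Q$-factorial, polarized, normal projective orbifold whose local uniformizing groups inject into $U(1)$. A basic transverse holomorphic line bundle $L$ on $M$ is defined by basic (hence $S^{1}$-invariant) transition functions, so it is the pullback $\pi^{\ast}\widehat{L}$ of a unique orbifold holomorphic line bundle $\widehat{L}$ on $Z$; similarly the basic Hermitian metric $h$ is $\pi^{\ast}\widehat{h}$ for an orbifold Hermitian metric $\widehat{h}$, and $R_{h}^{T}=\pi^{\ast}R_{\widehat{h}}$. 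Since $\pi^{\ast}$ identifies $H_{B}^{1,1}(M,\mathbb{R})$ with $H_{orb}^{1,1}(Z,\mathbb{R})$ and sends positive basic $(1,1)$-forms to positive orbifold $(1,1)$-forms, $L$ is positive if and only if $\widehat{L}$ is positive on $Z$.

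First I would prove the substantive direction, ``positive $\Rightarrow$ ample.'' Assuming $\widehat{L}$ positive, the orbifold Kodaira--Baily embedding theorem produces an integer $m_{0}$ such that for every $m\geq m_{0}$ the bundle $\widehat{L}^{m}$ is very ample: a basis $(\widehat{s}_{0},\dots,\widehat{s}_{N})$ of $H_{orb}^{0}(Z,\widehat{L}^{m})$ defines an embedding $Z\hookrightarrow\mathbb{C}\mathrm{P}^{N}$ pulling $\mathcal{O}(1)$ back to $\widehat{L}^{m}$. Using the canonical identification $\pi^{\ast}\colon H_{orb}^{0}(Z,\widehat{L}^{m})\to H_{B}^{0}(M,L^{m})$ (basic holomorphic sections of $L^{m}$ are exactly the pullbacks of orbifold sections of $\widehat{L}^{m}$), the sections $s_{i}=\pi^{\ast}\widehat{s}_{i}$ form a basis of $H_{B}^{0}(M,L^{m})$ and the induced map $i_{\underline{s}}\colon M\to\mathbb{C}\mathrm{P}^{N}$ is $S^{1}$-equivariant for the weighted $\mathbb{C}^{\ast}$-action and is an embedding wherever not all $s_{i}(x)$ vanish. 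This is precisely the definition of $L$ being ample; the orbifold step and the equivariance bookkeeping are carried out in \cite{rt} and \cite{hlm}.

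For the converse, ``ample $\Rightarrow$ positive,'' suppose $L^{m}$ is very ample for some $m$, with $i_{\underline{s}}\colon M\to\mathbb{C}\mathrm{P}^{N}$ the associated $S^{1}$-equivariant embedding. Then $i_{\underline{s}}^{\ast}\omega_{FS}$ is $S^{1}$-invariant and annihilated by $i(\xi)$ by equivariance, hence a basic closed real $(1,1)$-form, and it is positive on the contact distribution $D$ because $i_{\underline{s}}$ restricted to $D$ is an immersion into $(\mathbb{C}\mathrm{P}^{N},\omega_{FS})$; moreover $[i_{\underline{s}}^{\ast}\omega_{FS}]_{B}=m\,c_{1}^{B}(L)$. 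Concretely, the Fubini--Study metric on $\mathcal{O}(1)$ pulls back to a basic Hermitian metric on $L^{m}$ with positive curvature $i_{\underline{s}}^{\ast}\omega_{FS}$; taking its $m$-th root gives a basic Hermitian metric on $L$ with curvature $\tfrac{1}{m}i_{\underline{s}}^{\ast}\omega_{FS}>0$, so $L$ is positive.

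The main obstacle I expect is the first reduction: establishing rigorously the dictionary between basic transverse holomorphic data on $M$ and orbifold holomorphic data on $Z$---in particular the isomorphism $H_{B}^{0}(M,L^{m})\cong H_{orb}^{0}(Z,\widehat{L}^{m})$ and the fact that the local uniformizing groups of $Z$ injecting into $U(1)$ makes Baily's orbifold Kodaira embedding theorem directly applicable---together with tracking the $S^{1}$-equivariance with respect to the weighted $\mathbb{C}^{\ast}$-action throughout the construction. Once this is in place, both implications reduce to the orbifold Kodaira embedding theorem and the standard pullback-of-Fubini--Study argument.
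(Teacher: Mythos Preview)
The paper does not supply its own proof of this proposition; it merely records it as the Sasakian analogue of the Kodaira embedding theorem and attributes it to \cite{rt} and \cite{hlm}. Your proposal---reducing via the first structure theorem to the leaf orbifold $Z$, invoking the orbifold (Baily) Kodaira embedding theorem for the forward direction, and pulling back the Fubini--Study metric for the converse---is precisely the strategy underlying the Ross--Thomas reference \cite{rt}, so your approach is consistent with what the paper cites. The second reference \cite{hlm} offers an alternative, more intrinsically CR-theoretic route (equivariant Bergman kernel asymptotics on the CR manifold with circle action rather than passage to the quotient orbifold), but since the paper gives no argument of its own there is nothing further to compare.
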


\begin{definition}
(i) First we say that a subset $V$ of a (quasi-regular) Sasakian
$(2n+1)$-manifold $(M,\eta,\xi,\Phi,g)$ an invariant (Sasakian) submanifold
(with or without singularities) of dimension $2n-1$ if $\xi$ is tangent to $V$
and $\Phi TV\subset TV$ at all points of $V$ and is locally given as the zero
set $\{f=0\}$ of a locally defined basic CR holomorphic function $f$.\ In
general, $V$ may not be a submanifold. Denote by $V^{reg}$ the set of points
$p\in V$ for which $V$ is a submanifold of $M$ near $p$. We say that $V$ is
irreducible if $V^{reg}$ is connected. A transverse divisor $D^{T}$ on $M$ is
a formal finite sum $\sum_{i}a_{i}V_{i}$ where $a_{i}\in\mathbb{Z}$ and each
$V_{i}$ is an irreducible invariant submanifold of dimension $2n-1$. We say
that $D^{T}$ is effective if the $a_{i}$ are all nonnegative. The support of
$D^{T}$ is the union of the $V_{i}$ for each $i$ with $a_{i}\neq0.$

(ii) Given a transverse divisor $D^{T},$ we define an associated line bundle
as follows. Suppose that $D^{T}$ is given by local defining basic functions
$f_{\alpha}$ (vanishing on $D^{T}$ to order $1$) over an open cover
$U_{\alpha}$. Define transition functions $f_{\alpha}=t_{\alpha\beta}f_{\beta
}$ on $U_{\alpha}\cap U_{\beta}$. These are basic CR holomorphic and
nonvanishing in $U_{\alpha}\cap U_{\beta}$, and satisfy
\[
t_{\alpha\beta}t_{\beta\alpha}=1;\text{ }t_{\alpha\beta}t_{\beta\gamma
}=t_{\alpha\gamma}.
\]
Write $[D^{T}]$ for the associated basic line bundle, which is well-defined
independent of choice of local defining functions.

(iii) One can define
\[%
\begin{array}
[c]{c}%
L_{M}\cdot V=\int_{V}R_{h}^{T}\wedge\eta
\end{array}
\]
for all invariant Sasakian $3$-manifold $V$ in $M.$ Here $h$ is a basic
Hermitian metric on the basic line bundle $L_{M}.$\ From (ii), for a compact
Sasakian $5$-manifold $M,$ a transverse divisor $D^{T}$ defines an element of
$H_{\overline{\partial}_{B}}^{1,1}(M,\mathbb{R})$ by $D^{T}\rightarrow\lbrack
R_{h}^{T}]\in H_{\overline{\partial}_{B}}^{1,1}(M,\mathbb{R})$ for a basic
Hermitian metric on the associate basic line bundle $[D^{T}]$, and we define
\[%
\begin{array}
[c]{c}%
\alpha\cdot\beta=\int_{M}\alpha\wedge\beta\wedge\eta
\end{array}
\]
for $\alpha,$ $\beta\in H_{\overline{\partial}_{B}}^{1,1}(M,\mathbb{R}).$ Then
for an invariant $3$-manifold $V$ which is both a foliation curve and a
transverse divisor, then $V\cdot V$ is well-defined and we may write $V^{2}$
instead of $V\cdot V.$
\end{definition}

\begin{remark}
(\cite{gei}) Note that the Sasakian $3$-manifold $V$ is either canonical,
anticanonical or null. $V$ is up to finite quotient a regular Sasakian
$3$-manifold, i.e., a circle bundle over a Riemann surface of positive genus.
In the positive case, $V$ is covered by $S^{3}$ and its Sasakian structure is
a deformation of a standard Sasakian structure.
\end{remark}

\begin{definition}
(i) We say that a basic line bundle $L$ is nef if $L\cdot V\geq0$ for any
invariant Sasakian $3$-manifold $V$ in $M.$ In particular if $M$ is
quasi-regular, then $V$ is the $S^{1}$-oribundle over the curve $C$ in $Z$ so
that
\[%
\begin{array}
[c]{c}%
L_{Z}\cdot C=\int_{C}R_{h_{Z}}\geq0.
\end{array}
\]
Here $c_{1}^{B}(L_{M})=\pi^{\ast}c_{1}^{orb}(L_{Z})$ and $h_{Z}$ is the
Hermitian metric in the corresponding line bundle $L_{Z}.$ Define
\begin{equation}
C_{M}^{B}=\{[\alpha]_{B}\in H_{B}^{1,1}(M,\mathbb{R})|\text{ }\exists\text{
}\omega>0\text{ such that }[\omega]_{B}=[\alpha]_{B}\}.\label{D}%
\end{equation}
Then we can also define a class $[\alpha]_{B}$ called nef class if
$[\alpha]_{B}\in\overline{C_{M}^{B}}$ and a class $[\alpha]_{B}$ called big
if
\[%
\begin{array}
[c]{c}%
\int_{M}\alpha^{n}\wedge\eta>0.
\end{array}
\]

(ii) If the Sasakian $(2n+1)$-manifold $(M,\eta,\xi,\Phi,g)$ has the canonical
basic line bundle $K_{M}^{T}$ nef, then we say that $M$ is a smooth transverse
minimal model. If $M$ has $K_{M}^{T}$ big, then we say that $M$ is of general type.
\end{definition}

\end{document}